\def\<{\langle}
\def\>{\rangle}
\def\a{\alpha}
\def\c{\cdot}
\def\D{\Delta}
\def\g{\gamma}
\def\om{\omega}
\def\i{\iota}
\def\l{\lambda}
\def\lr{\longrightarrow}
\def\rh{\rightharpoonup }
\def\o{\otimes}
\def\p{\phi}
\def\v{\varepsilon}
\def\vp{\varphi}
\def\<{\langle}
\def\>{\rangle}
\begin{document}

\markboth{N. Zhou, S. Wang }
{A Duality Theorem for Weak Multiplier Hopf Algebra Actions}


\title{A DUALITY THEOREM FOR WEAK MULTIPLIER HOPF ALGEBRA ACTIONS }

\author{Nan Zhou, Shuanhong Wang\footnote{Corresponding author}}
\address{Department of Mathematics, Southeast University \\
Nanjing, Jiangsu, China\\
zhounan0805@163.com, shuanhwang@seu.edu.cn}

\maketitle

\begin{abstract}
The main purpose  of this paper is to unify  the theory of actions of Hopf algebras, weak Hopf algebras and multiplier Hopf algebras to one of actions of weak multiplier Hopf algebras introduced by A. Van Daele and S. H. Wang. Using such developed actions, we will define the notion of a module algebra over weak multiplier Hopf  algebras  and  construct their smash products. The main result is the duality theorem for actions  and their dual actions on the smash product of  weak multiplier Hopf algebras. As an application, we recover the main results found in the literature for weak Hopf algebras, multiplier Hopf algebras and groupoids.
\end{abstract}

\keywords{Duality theorem; Weak multiplier Hopf algebras; Actions; Groupoids.}

\ccode{Mathematics Subject Classification 2000: 16W30, 16S40.}

\section{Introduction}	
It is well known that the most well-known examples of Hopf algebras (\cite{S}) are the linear spans of (arbitrary)
 groups. Dually, also the vector space of linear functionals on a finite group carries the
 structure of a Hopf algebra. In the case of infinite groups, however, the vector space
 of linear functionals (with finite support)  possesses no unit. Consequently, it is no
 longer a Hopf algebra but, more generally, a multiplier Hopf algebra in \cite{VD1} and \cite{VD2}.
  Considering finite groupoids, both their linear spans and the dual vector spaces of linear
 functionals carry weak Hopf algebra structures in \cite{BNS} and \cite{BS}. Finally, removing the finiteness
 constraint in this situation, both the linear spans of arbitrary groupoids and the vector
 spaces of linear functionals with finite support on them are examples of weak multiplier
 Hopf algebras as introduced in \cite{VDW1} and \cite{VDW2}
 (also see \cite{B1} \cite{B2} \cite{BGL} and \cite{KV} for more references). Especially,
 multiplier Hopf algebras and weak Hopf algebras are the special examples of weak multiplier Hopf algebras.

 In the classical Hopf algebra action theory: Let $H$ be a finite Hopf algebra and $A$ be a left $H$-module algebra. Then
 the smash product $A\# H$ is a left $H^*$-module algebra via a natural way: $h^*\c (a\#h)=(a\# h^*\rh h)$,
 where $h^*\rh h=\sum h_{(1)}\<h^*, h_{(2)}\>$
 for all $a\in A, h\in H, h^*\in H^*$.
 Furthermore, it follows from \cite{BM} that there is an isomorphism
 $(A\# H)\# H^* \cong M_n(A)$, where $n = dim H$ and $M_n(A)$ is an algebra of $n$-by-$n$ matrices over $A$.
  For weak  Hopf algebras this result was proved in \cite{N} (also see \cite{ZW}). For groupoids the result was studied in \cite{PF}.
 For multiplier Hopf algebras with integrals  it was established in \cite{DVZ} (where infinite dimensional case was considered).
 In this paper, we will show that this result can be extended to weak multiplier Hopf algebras with integrals
 in some form.

The paper is organized as follows.

First we recall some definitions and propositions related to weak multiplier Hopf algebras in Section \ref{two}. In Section \ref{three} we define the
 notion of an action of a weak multiplier Hopf algebra $A$ on an algebra $R$  and study some examples.   In Section \ref{four},  we discuss  the smash product $R\#A$ for a left $A$-module algebra $R$.

In Section \ref{five} we consider the pairing between a weak multiplier Hopf algebra $A$ with integrals
  and its dual $\hat{A}$.
 The main result is Proposition \ref{5.5} and \ref{5.6}. These results will be basic for the theory in the next section.

 In Section \ref{six} we will get the main results of this paper. We first get some properties of bi-smash products.
 Then we apply them to any dual pair of algebraic quantum groupoids. Finally we obtain the main duality theorem
 in Theorem \ref{6.4}.

In the paper, we always consider the non-unital associative algebra $A$ over $\mathbb{C}$ with non-degenerate product.
 By this one means that: given $a\in A$ we have $a = 0$ if either $ab = 0$ for all $b\in A$ or $ba = 0$ for all $b\in A$.
 We will also require the algebra $A$ to be idempotent (i.e. $A^2=A$). Clearly, also the algebra $A^{op}$ on the same vector space
 $A$ with the opposite multiplication is idempotent and non-degenerate, whenever $A$ is so.

 For any algebra $A$,  recall from  \cite{VD1} that a left multiplier of $A$ is a linear map $l: A \lr A$ such that
 $l(ab) = l(a)b$ for all $a, b \in A$. A right multiplier of $A$ is a linear map
 $r: A \lr A$ such that $r(ab) = ar(b)$ for all $a, b \in A$. A multiplier of $A$ is a
 pair $(l, r)$ of a left and a right multiplier such that $r(a)b = al(b)$ for all
 $a, b \in A$. We denote by $L(A)$, $R(A)$, and $M(A)$ the left, right, and multipliers of $A$.
 It is clear that the composition of maps makes these vector spaces into algebras.
  If $A$ has an identity then $A=M(A)=L(A)=R(A)$ (cf.\cite{VD1}).

  We will use $1$ for the identity in $M(A)$  and we will use $\i $ to denote
  the identity map of each vector space.
  And if $B$ is another non-unital non-degenerate algebra, we have the following natural embeddings
 $A\o B\subseteq M(A)\o M(B)\subseteq M(A\o B)$.

\section{Preliminaries on  weak multiplier Hopf algebras}\label{two}
In this section  we will recall the definition and some properties of  weak multiplier Hopf algebras.

Recall from the Definition 1.1 in \cite{VDW1} that
 a {\it coproduct} on an  algebra $A$ is a homomorphism $\D: A\lr M(A\o A)$ such that
\begin{itemlist}
 \item $\D(a)(1\o b)$ and $(a\o 1)\D(b)$ are in $A\o A$ for all $a, b\in A$,
 \item $\D$ is coassociative in the sense that
$$
(c\o 1\o 1)(\Delta\o \i)(\Delta(a)(1\o b))=(\i\o\Delta)((c\o 1)\Delta(a))(1\o 1\o b)
$$
for all $a, b, c\in A$.
\end{itemlist}

The coproduct $\D$ is called \textit{full} if the smallest subspaces $V$ and $W$ of $A$ satisfying
$$\D(A)(1\o A)\subseteq V\o A \quad \quad \text{and} \quad \quad (A\o 1)\D(A)\subseteq A\o W$$
are equal to $A$. The coproduct $\D $ is called \textit{regular} if $\D(a)(b\o 1)$ and $(1\o a)\D(b)$ are in $A\o A$ for all $a, b\in A$.

For a coproduct $\D$ on $A$, the \textit{canonical maps} $T_1$ and $T_2$ from $A\o A$ to $A\o A$ are defined by
$$
T_1(a\o b)=\D(a)(1\o b) \quad \text{and} \quad T_2(a\o b)= (a\o 1)\D(b).
$$
If $\D$ is regular, we define $T_3$ and $T_4$ on $A\o A$ by
$$
T_3(a\o b)=(1\o b)\D(a) \quad \text{and} \quad T_4(a\o b)=\D(b)(a\o 1).
$$

Recall from the  Definition 1.8 of \cite{VDW1} that a linear map $\v: A\lr \mathbb{C}$ is called a {\it counit}
on an algebra  $A$ with a coproduct $\D $ if
$$
(\v\o \i)(\D(a)(1\o b))=ab \quad \text{and} \quad (\i\o \v)((a\o 1)\D(b))=ab
$$
for all $a, b\in A$.

For any vector space $A$, denote by ${\bf fl}$ the flip map $A\o A \lr  A\o A, a\o b \mapsto b\o a$.
For a non-unital algebra $A$ with a non-degenerate multiplication, and for a coproduct $\D: A\lr M(A\o A)$,
define a multiplicative linear map $\D ^{cop}: A\lr M(A\o A)$
via
$$
\D ^{cop}(a)(b \o c):= {\bf fl}(\D (a)(c \o b))\, \mbox { and}\,  (b \o c)\D ^{cop}(a):= {\bf fl}((c \o b)\D (a))
$$
and define $ \D _{13}: A \lr  M(A \o A \o A)$ by
$$
\D _{13}(a)(b \o c \o d):= (\i  \o {\bf fl})(\D (a)(b \o d) \o c)
$$
 and
$$
(b \o c \o d)\D _{13}(a):= (\i \o {\bf fl})((b \o d)\D (a) \o c).
$$

Recall from the Definition 1.14 of \cite{VDW2}  that
 a {\it weak multiplier Hopf algebra} is a pair $(A,\D)$ of a non-degenerate idempotent algebra $A$ with a full coproduct
 $\D $ and a counit satisfying the following conditions:

\begin{romanlist}[(iii)]
\item there exists an idempotent $E\in M(A\otimes A)$ giving the ranges of the canonical maps:
$$
E(A\otimes A)=T_1(A\otimes A)\quad \text{and}\quad (A\otimes A)E=T_2(A\otimes A);
$$
\item the element $E$ satisfies
$$
(\i\otimes \D)E=(E\otimes 1)(1\otimes E)=(1\otimes E)(E\otimes 1);\eqno(2.1)
$$
\item  the kernels of the canonical maps are of the form
$$
Ker(T_1)=(1-G_1)(A\otimes A)\quad \text{and}\quad Ker(T_2)=(1-G_2)(A\otimes A)
$$
where $G_1$ and $G_2$ are the linear maps from $A\otimes A$ to itself, given as
$$
(G_1\otimes \i)(\D_{13}(a)(1\otimes b\otimes c))=\D_{13}(a)(1\otimes E)(1\otimes b\otimes c)
$$
and
$$
(\i\otimes G_2)((a\otimes b\otimes 1)\D_{13}(c))=(a\otimes b\otimes 1)(E\otimes 1)\D_{13}(c)
$$
for all $a, b, c\in A$.
\end{romanlist}

A weak multiplier Hopf algebra is called \textit{regular} if the coproduct is regular and if
also $(A, \D^{cop})$ is a weak  multiplier Hopf algebra. This is the same as requiring that also $(A^{op}, \D)$ is
 a weak  multiplier Hopf algebra.

\begin{remark}
i) Let $(A, \D )$ be a regular weak multiplier Hopf algebra. As the same as in a regular multiplier Hopf algebra case (see Proposition 2.2 in \cite{DVZ}),
 we know from Proposition 2.21 in \cite{VDW3} that $A$ has local units. More precisely, let $a_1, a_2, \cdots , a_n$ be elements in $A$. Then there exist elements
 $e, f$ in $A$ such that $ea_i=a_i$, and $a_if=a_i$, for all $i$.

We would like to use a formal expression for $\D (a)$ when $a \in A$. The problem is
 that $\D (a)$ is not in $A \o A$ in general. We know however that
 $\D(a)(1\o b)\in A\o A$ for all $a, b\in A$.  We will use the Sweedler notation and
 write $\D(a)=\sum a_{(1)}\o a_{(2)}\in M(A\o A)$ for any $a\in A$.
 Then we know that $\D(a)(1\o b)\in A\o A$ for all $a,b\in A$ and  we will write
$\D(a)(1\o b)=\sum a_{(1)}\o a_{(2)}b$.
Then we say that $a_{(2)}$ is {\it covered} by $b$ in this equation.
Now, we know that there is an element $e \in A$
  such that $b=eb$ and we can think
 of $\sum a_{(1)}\o a_{(2)}$ to stand for  $\D(a)(1\o e)$. Of course, this is still dependent on $b$.
 But we know that for several elements $b$, we can use the same $e$.
   Note that it should be careful when one uses the Sweedler notation in weak multiplier Hopf algebras everywhere.

ii) If  the element $E$ exists and satisfies the first condition, we can show that the coproduct
 $\D$ has a unique extension to a homomorphism $\tilde{\D}: M(A)\lr M(A\o A)$.
  We denote the extension still by $\D$. In a similar way, we can extend
    $\D\o \i$ and $\i\o \D$ to the homomorphisms from $M(A\o A)$ to $M(A\o A\o A)$.
    So we can give a meaning to the formulas $(\i\o \D)E$ and $(\D\o \i)E$.
     We will call $E$ the {\it canonical idempotent}. It is uniquely determined and it satisfies
$$
E\D(a)=\D(a)=\D(a)E \eqno(2.2)
$$
for all $a\in A$.
\end{remark}

There is a unique antipode $S$ from $A$ to $M(A)$. Recall from Proposition 3.5 and 3.7 in \cite{VDW1} that
 the antipode $S$  is an anti-algebra and an anti-coalgebra  map. Moreover the antipode satisfies
 $\sum S(a_{(1)})a_{(2)}S(a_{(3)})=S(a)$ and
$$
\sum a_{(1)}S(a_{(2)})a_{(3)}=a \eqno(2.3)
$$
for all $a\in A$.  If $A$ is regular then the antipode is a bijective map from $A$ to itself
 (see Definition 4.5 in  \cite{VDW1}).

Let $(A,\D, \v, E, S)$ be a regular weak multiplier Hopf algebra. For any $a\in A$, we have the following four linear maps
 $\v _s, \v_t, \v'_s$ and $\v _t'$ from $A$ to $M(A)$ (see Proposition 2.1 and Remark 2.22 i) in \cite{VDW3}):
 $$
\left\{
\begin{array}{l}
\v_s(a)=(\i\o \v)((1\o a)E)=\sum S(a_{(1)})a_{(2)},\\
\v_t(a)=(\v\o \i)(E(a\o 1))=\sum a_{(1)}S(a_{(2)}),\\
\v'_s(a)=(\i\o \v)(E(1\o a))=\sum a_{(2)}S^{-1}(a_{(1)}),\\
\v'_t(a)=(\v\o \i)((a\o 1)E)=\sum S^{-1}(a_{(2)})a_{(1)}.
\end{array}
\right.
$$

 We will call $\v_s(A)$ the {\it source algebra} and $\v_t(A)$ the {\it target algebra} as in  \cite{VDW3}.
 They can be identified resp. with the left and the right leg of $E$. We have
 $\v_s(a)\v_s(b)=\v_s(a\v_s(b))$ and
$$
  \v_t(a)\v_t(b)=\v_t(\v_t(a)b),  \eqno(2.4)
$$
 where $a,b\in A$. In the regular case, they embed in $M(A)$ in such a way that their
 multiplier algebras $M(\v _s(A))$ and $M(\v _t(A))$ still embed in $M(A)$.
 These multiplier algebras are denoted by $A_s$ and $A_t$ resp.
 They are still commuting subalgebras of $M(A)$.

For a regular weak multiplier Hopf algebra, the multiplier algebras of
 the source and target algebras satisfy
 \begin{eqnarray*}
 && A_s=\{y\in M(A)\mid \D(ay)(1\o b)=\D (a)(1\o yb) \, \, \mbox{for all}\, \, a, b\in  A \},\\
 && A_t=\{x\in M(A)\mid (c\o 1)\D(xa)=(cx\o 1)\D (a)\, \, \mbox{for all}\, \, a, c\in  A \}.
 \end{eqnarray*}
For element $y\in  A_s$ and $x \in A_t$, we  have
 $\D (y) = E(1\o y)=(1\o y)E$ and
 $$
\D (x) = (x\o 1)E=E(x\o 1).  \eqno(2.5)
$$

For all $a\in A$, we also have from Proposition 2.7 in  \cite{VDW3} that
 $\v_s(ay)=\v_s(a)y, \v_t(ya)=\v_t(a)S(y), \v_t(xa)=x\v_t(a)$ and $\v_s(ax)=S(x)\v_s(a)$.

We also list some formulas here. For any regular  weak multiplier Hopf algebra $A$, we have
 (see \cite{VDW3}):
$$
E(a\o 1)=\sum \D (a_{(1)})(1\o S(a_{(2)}))=\sum a_{(1)}\o \v_t(a_{(2)})  \eqno(2.6)
$$
$$
 (1\o a)E=\sum (S(a_{(1)})\o 1)\D (a_{(2)})=\sum \v_s(a_{(1)})\o a_{(2)} \eqno(2.7)
$$
$$
E(1\o a)=\sum \D (a_{(2)})(S^{-1}(a_{(1)})\o 1)=\sum \v'_s(a_{(1)})\o a_{(2)} \eqno(2.8)
$$
for all $a\in A$. For any $ y\in A_s$, we have:
$$
E(y\o 1)=E(1\o S(y)).   \eqno(2.9)
$$

We now make an important remark about the covering of the previous formulas (see \cite{VDW3}).

\begin{remark}
i) First rewrite the (images of the) canonical maps $T_1$ and $T_2$,
 and of $T_3$ and $T_4$ in the regular case, using the Sweedler notation, as
$$
 \D (a)(1\o b) =\sum a_{(1)}\o a_{(2)}b   \quad \mbox{and} \quad (c\o 1)\D (a) =\sum ca_{(1)}\o a_{(2)} \eqno(2.10)
$$
and
$$
(1\o b)\D (a) =\sum a_{(1)}\o  ba_{(2)} \quad \mbox{and} \quad  \D (a)(c\o 1) =\sum a_{(1)}c\o a_{(2)}
$$
where $a, b, c \in A$. In all these four expressions, either $a_{(1)}$ is covered by $c$ and or
 $a_{(2)}$ by $b$. This is by the assumption put on the coproduct, requiring that the canonical maps have range
  in $A\o A$.

If  we first apply $S$ in the first or the second factor of the expressions in the formulas (2.10) and then multiply, we get the two elements
$$
 \sum S(a_{(1)})a_{(2)}b   \quad \mbox{and} \quad \sum ca_{(1)} S(a_{(2)})
$$
where $a, b, c \in A$. This is used to define the source and target maps above.

ii) Next consider the expressions
$$
 \sum a_{(1)}\o S(a_{(2)})b  \quad \mbox{and} \quad \sum cS(a_{(1)})\o a_{(2)} \eqno(2.11)
$$
and
$$
\sum a_{(1)}\o  bS(a_{(2)}) \quad \mbox{and} \quad  \sum S(a_{(1)})c\o a_{(2)}
$$
where $a, b, c \in A$.  In the first two formulas, we have a covering by the assumption that
 the generalized inverses $ R_1$ and $R_2$ of the canonical maps exist as maps on $A\o A$ with range
 in $A\o A$ (see \cite{VDW2}). In the second pair of formulas, we have a good covering only
 in the regular case.

If  we simply apply multiplication on the expressions
 in the formulas (2.11), we get the two elements
 $$
\sum a_{(1)} S(a_{(2)})b \quad \mbox{and} \quad  \sum cS(a_{(1)}) a_{(2)}
$$
where $a, b, c \in A$. This is also used to define the source and target maps above.

iii) Now, we combine the coverings obtained in the part i) and the part ii). Consider e.g. the two expressions:
\begin{eqnarray*}
&& \sum \D (a_{(1)})(1\o S(a_{(2)})b) \, \, \mbox {and}\, \,  \sum (cS(a_{(1)})\o 1) \D (a_{(2)})
 \end{eqnarray*}
where $a, b, c \in A$.  The first expression above is obtained by applying the canonical map $T_1$ to
 the first of the two expressions in (2.11). So this gives an element in
  $A\o A$ and we know that it is $E(a\o b)$ as we can see from the formula (2.6) above.
   Similarly, the second expression above is
 obtained by applying the canonical map $T_2$ to the second of the two expressions  in (2.11).
  We know that this is $(b\o a)E$ as we see from the formula (2.7) above. Remark that $ E(a\o b)$ and
 $(b\o a)E$ belong to $A\o A$ because by assumption $ E \in  M(A\o A)$, but that on the other hand,
 it is not obvious (as we see from the above arguments) that the expressions that we obtain for
 these elements belong to $A\o A$.

iv) Finally, as a consequence of the above statements, also the four expressions
\begin{eqnarray*}
&& \sum S(a_{(1)})a_{(2)}S(a_{(3)})b   \quad \mbox{and} \quad \sum ca_{(1)} S(a_{(2)})a_{(3)};\\
&& \sum a_{(1)} S(a_{(2)})a_{(3)}b \quad \mbox{and} \quad  \sum cS(a_{(1)}) a_{(2)}S(a_{(3)})
 \end{eqnarray*}
are well-defined in $ A$ for all $a, b, c \in A$. This
 justifies a statement made earlier about the properties of the antipode (see \cite{VDW3}).

And once again, in all these cases, the Sweedler notation is just used as a more transparent way to
denote expressions. We refer to the coverings just to indicate how the formulas with the Sweedler
notation can be rewritten without the use of it.
\end{remark}

\section{Module algebras over weak multiplier Hopf algebras}\label{three}
\subsection{Definition and module extension}
In this section, we fix a  weak multiplier Hopf algebra $(A, \D)$. We do not assume that it is regular.  We know that $A$ still admits local units  in the non-regular case, see Proposition 2.21 in \cite{VDW3}.

By a left $A$-module  we mean a vector space $R$ equipped with a bilinear map $A\o R\lr R, a\o r\mapsto ar$ satisfying $(ab)r=a(br)$ for all $r\in R$ and $a, b\in A$. It is called {\it unital} if $AR=R$. The module is called non-degenerate if $r\in R$ and $ar=0$ for all $a\in A$, implies $r=0$.   A unital module is automatically non-degenerate because we have local units. In this case, one can show the following.

\begin{proposition}
If $R$ is a unital left $A$-module, then it is  non-degenerate.
\end{proposition}

Let us give an important remark about the unital module which is from Section 3 in \cite{DVZ}.
We just copy it here because it will help the reader to understand the Sweedler notation or the covering.

\begin{remark}
 Let $R$ be a unital $A$-module. Since $A$ has local units, then there exists an  $e\in A$ such that $ex=x$ for any $x\in R$. Moreover, for all $x_1, x_2,..., x_n\in R$, we have an element $e$ such that $ex_i=x_i$ for all $i$. It means that elements in a unital $A$-module will cover elements $a_{(k)}\in A$.
\end{remark}

 Next we show that unital $A$-modules can be extended to modules over $M(A)$. This will help to explain the formulas in Definition \ref{3.2}.

\begin{theorem}\label{3.1}
Let $R$ be a unital left $A$-module. Then there is a unique extension to a left $M(A)$-module with $1x=x$ for all $x\in R$, here $1\in M(A)$.
\end{theorem}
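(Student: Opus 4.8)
The plan is to build the extended action by factoring elements of $R$ through the given $A$-action and then transporting a multiplier across that factorization. Concretely, writing $m\in M(A)$ as a pair $(l,r)$ of a left and a right multiplier with $r(a)b=al(b)$, and given $x\in R$, I would express $x=\sum_i a_i y_i$ with $a_i\in A$ and $y_i\in R$ (possible since $R=AR$ is unital) and set
$$
m\cdot x:=\sum_i (ma_i)y_i=\sum_i l(a_i)y_i,
$$
which makes sense because $l(a_i)\in A$, so $l(a_i)y_i$ uses only the given $A$-module structure. Equivalently, invoking the fact that $R$ has local units (the Remark preceding the theorem), one may pick $e\in A$ with $ex=x$ and define $m\cdot x=(me)x$; I would keep both descriptions available, since the factorized form is cleanest for checking axioms while the local-unit form is convenient for computations.

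The hard part will be well-definedness: I must show that $\sum_i a_i y_i=0$ forces $\sum_i l(a_i)y_i=0$, so that the formula is independent of the chosen factorization. Here I would exploit non-degeneracy of $R$ (the preceding Proposition: unital implies non-degenerate), which reduces the task to proving $b\,\big(\sum_i l(a_i)y_i\big)=0$ for every $b\in A$. The key computation is the multiplier identity $b\,l(a_i)=(bm)a_i=r(b)a_i$, which is exactly the compatibility $r(b)a=b\,l(a)$ built into the definition of a multiplier. Since $r(b)=bm\in A$, I can pull it through the $A$-action:
$$
\sum_i b\,l(a_i)y_i=\sum_i (r(b)a_i)y_i=r(b)\Big(\sum_i a_i y_i\Big)=0.
$$
Non-degeneracy of $R$ then yields $\sum_i l(a_i)y_i=0$, so the action is well defined; in the local-unit picture the same identity shows independence of the choice of $e$.

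Finally I would verify the module axioms and uniqueness, which I expect to be routine. Linearity in $x$ is immediate from the factorized formula; the relation $(mn)\cdot x=m\cdot(n\cdot x)$ follows from $l_{mn}=l_m\ci l_n$ on $A$ together with $n\cdot(a_iy_i)=(na_i)y_i$; and $1\cdot x=\sum_i a_i y_i=x$ since $1$ is the identity multiplier, recovering the normalization $1x=x$. For uniqueness, any $M(A)$-module extension restricting to the given $A$-action must satisfy $m\cdot(ay)=(ma)y$ for $a\in A$ and $y\in R$, because $ma\in A$ and the $A$-action is fixed; expanding an arbitrary $x=\sum_i a_iy_i$ then forces precisely the formula above. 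The one point I would double-check is that the whole argument uses only the left/right multiplier compatibility and non-degeneracy, so it does not secretly require regularity of $A$, consistent with the section's standing assumption that $A$ need not be regular.
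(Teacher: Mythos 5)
Your proposal is correct and takes essentially the same route as the paper: the paper (deferring to Proposition 3.3 in \cite{DVZ}) likewise factorizes $r=\sum_i a_ix_i$ using $R=AR$, defines $mr=\sum_i (ma_i)x_i$, and appeals to non-degeneracy of the unital module for well-definedness. Your explicit check of well-definedness via the multiplier compatibility $r(b)a=b\,l(a)$ is precisely the detail the paper leaves implicit, and your uniqueness argument matches the intended one.
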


For the proof we refer to Proposition 3.3 in \cite{DVZ}. Note that if $A$ does not have local units, we can also get the above result. We only need $R$ to be unital and non-degenerate. Let $r\in R, m\in M(A)$. Since $R$ is unital, we can write $r=\sum a_ix_i$ where $a_i\in A$ and $x_i\in R$ for all $i$. Then we can define the action of $M(A)$ by $mr=\sum (ma_i)x_i$. It is well-defined because the module is non-degenerate.

If $R$ and $S$ are unital left $A$-modules, then we can make $R\o S$ into a unital left $(A\o A)$-module by $(a\o b)(r\o s)=ar\o bs$. By the above theorem we can extend it to a module over the multiplier algebra $M(A\o A)$. Now we can consider the action of $A$ on $R\o S$ by  $a(r\o s)=\D(a)(r\o s).$  But the action is not unital any more since $T_1$ and $T_2$ are not surjective. Fortunately  we can show that the subspace $E(R\o R)$ is a non-degenerate and unital module under the action. This is because $\D(A)(A\o A)=E(A\o A)$.

\begin{definition}\label{3.2}
 Let $R$ be an algebra and  $R$ a unital left $A$-module via $a\o x\mapsto a\rhd x$
  for all $a\in A, x\in R$. Then
 $R$ is called a left $A$-module algebra if the following condition holds:
\begin{equation}
a\rhd (xy)=\sum(a_{(1)}\rhd x)(a_{(2)}\rhd y)
\end{equation}
for all $a\in A, x,y\in R$.
\end{definition}

Since $R$ is  a unital left $A$-module  we know that  the elements $x$ and $y$ can be used to cover
 $a_{(1)}$ and $a_{(2)}$. We can also explain this expression as
\begin{equation}
a\rhd m(x\o y)=m(\D(a)\rhd (x\o y))
\end{equation}
 where $m$ denotes multiplication in $R$.

\begin{proposition}\label{3.3}
Let $R$ be a left $A$-module algebra. The following properties hold for all $r,r'\in R$ and $y\in \v_s(A)$,
\begin{equation}
m(E\rhd (r\o r'))=rr'
\end{equation}
and
\begin{equation}
(y\rhd r)r'=r(S(y)\rhd r').
\end{equation}
\end{proposition}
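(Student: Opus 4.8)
The plan is to prove the first identity and then obtain the second as a formal consequence of it together with formula (2.9). The conceptual idea behind the first identity is to treat the canonical idempotent $E$ as $\D(1)$. By Theorem \ref{3.1} the module extends uniquely to a unital $M(A)$-module, and since $\D$ extends to a homomorphism $M(A)\lr M(A\o A)$, the element $\D(1)$ is an idempotent satisfying $\D(1)\D(a)=\D(a)=\D(a)\D(1)$; by the uniqueness of the canonical idempotent (see (2.2)) this forces $\D(1)=E$. If one grants that the module-algebra identity extends to the identity multiplier, i.e. $1\rhd(rr')=m(\D(1)\rhd(r\o r'))$, then
$$m(E\rhd(r\o r'))=m(\D(1)\rhd(r\o r'))=1\rhd(rr')=rr',$$
which is the first formula.

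The substance is therefore to justify this extended identity by a covering argument. First I would cover $r=a\rhd t$ with $a\in A$ and $t\in R$, so that $r\o r'=(a\o 1)\rhd(t\o r')$ and hence $E\rhd(r\o r')=E(a\o 1)\rhd(t\o r')$. Applying formula (2.6), $E(a\o 1)=\sum a_{(1)}\o\v_t(a_{(2)})$, this gives
$$m(E\rhd(r\o r'))=\sum(a_{(1)}\rhd t)(\v_t(a_{(2)})\rhd r').$$
Everything then reduces to the ``target counit'' identity
$$\sum(a_{(1)}\rhd t)(\v_t(a_{(2)})\rhd r')=(a\rhd t)\,r',\eqno(\ast)$$
which I would approach by writing $\v_t(a_{(2)})=\sum a_{(2)}S(a_{(3)})$, regrouping the first two legs through the module-algebra axiom, and collapsing the outcome with the antipode identity (2.3), $\sum a_{(1)}S(a_{(2)})a_{(3)}=a$. \textbf{Establishing $(\ast)$ legitimately is the main obstacle:} the non-unitality of $A$ rules out the one-line argument available in the weak Hopf algebra case (where $E=\D(1)$ with an honest unit $1\in H$), so one must verify throughout that the coverings are valid and that the naive regrouping does not merely return the starting expression, but genuinely telescopes through (2.3).

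Once the first identity is in hand, the second is purely formal. For $y\in\v_s(A)\subseteq A_s$, I would apply the first identity to the pair $(y\rhd r,\ r')$ and rewrite $(y\rhd r)\o r'=(y\o 1)\rhd(r\o r')$, obtaining
$$(y\rhd r)r'=m\big(E(y\o 1)\rhd(r\o r')\big).$$
Now formula (2.9), $E(y\o 1)=E(1\o S(y))$, moves $S(y)$ onto the second leg, and a second application of the first identity yields
$$m\big(E(1\o S(y))\rhd(r\o r')\big)=m\big(E\rhd(r\o(S(y)\rhd r'))\big)=r\,(S(y)\rhd r'),$$
which is the desired equality $(y\rhd r)r'=r(S(y)\rhd r')$. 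The only points requiring care here are that $y\in A_s$ so that (2.9) is available and that $S(y)\in M(A)$ acts through the extension guaranteed by Theorem \ref{3.1}.
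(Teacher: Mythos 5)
Your treatment of the first identity contains the genuine gap, and you have located it yourself: everything is made to rest on the ``target counit'' identity $(\ast)$, which you do not prove. Note that $(\ast)$ is not a stepping stone but is \emph{equivalent} to (3.3): setting $r=a\rhd t$ (possible by unitality) turns (3.3) into $(\ast)$ and conversely, so the covering step has only restated the problem. Your worry about circularity is also well founded: writing $\v_t(a_{(2)})=\sum a_{(2)}S(a_{(3)})$ and regrouping by (3.1) yields $\sum a_{(1)}\rhd\bigl(t(S(a_{(2)})\rhd r')\bigr)$, and the only further move (3.1) allows is to expand this back into the expression you started from; the Hopf-algebra collapse via (2.3) is unavailable because $\sum a_{(2)}S(a_{(3)})=\v_t(a_{(2)})$ is a genuine multiplier, not a scalar. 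The missing idea --- and the paper's entire proof --- is to exploit non-degeneracy instead of verifying the identity ``directly'': since $R$ is unital it is non-degenerate (Proposition 3.1), so it suffices to check that both sides agree after acting by an arbitrary $a\in A$, and this test element supplies every covering one needs:
$$
a\rhd m\bigl(E\rhd(r\o r')\bigr)\overset{(3.2)}{=}m\bigl((\D(a)E)\rhd(r\o r')\bigr)\overset{(2.2)}{=}m\bigl(\D(a)\rhd(r\o r')\bigr)\overset{(3.2)}{=}a\rhd(rr');
$$
cancelling $a$ gives (3.3). The same device disposes of your $(\ast)$ as well: acting by $c\in A$, using (3.1) and $\D(c)E(a\o 1)\overset{(2.2),(2.6)}{=}\D(c)(a\o 1)$, the left side becomes $c\rhd\bigl((a\rhd t)r'\bigr)$. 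A minor further point: your identification $E=\D(1)$ is true, but the uniqueness argument you give is loose --- idempotency together with $F\D(a)=\D(a)=\D(a)F$ only yields $FE=E=EF$; one needs the actual construction of the extension, which satisfies $\tilde{\D}(m)=\tilde{\D}(m)E$.

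Your second half is correct and is genuinely different from the paper. Granting (3.3), the chain $(y\rhd r)r'=m\bigl(E(y\o 1)\rhd(r\o r')\bigr)=m\bigl(E(1\o S(y))\rhd(r\o r')\bigr)=r(S(y)\rhd r')$, using (2.9) and the $M(A\o A)$-module structure on $R\o R$, is a valid purely formal deduction of (3.4) from (3.3). The paper instead proves (3.4) independently by the same non-degeneracy trick as above: $a\rhd((y\rhd r)r')=\sum((a_{(1)}y)\rhd r)(a_{(2)}\rhd r')=\sum(a_{(1)}\rhd r)((a_{(2)}S(y))\rhd r')=a\rhd(r(S(y)\rhd r'))$, then cancel $a$. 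So once you repair the first identity with the non-degeneracy argument, your derivation of the second identity stands as a legitimate alternative.
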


\begin{proof}
 (i) Note that  $E\in M(A\o A)$, so the formula (3.3)  is meaningful. For any $a\in A$,
$$
a\rhd m(E\rhd (r\o r'))\overset{(3.1)}=m(\D(a)E\rhd (r\o r'))\overset{(2.2)}=m(\D(a)\rhd (r\o r'))\overset{(3.2)}=a\rhd (rr').
$$
By the non-degeneracy of the module, we get $m(E\rhd (r\o r'))=rr'$.

 (ii) Remark that by Theorem \ref{3.1} we have a left $M(A)$-module, so the formula (3.4) is meaningful.
 Take any element $y$ in $\v_s(A)$,  for any $a\in A$ we have
 \begin{eqnarray*}
a\rhd ((y\rhd r)r') &\overset{(3.1)}{=}& \sum((a_{(1)}y)\rhd r)(a_{(2)}\rhd r')\\
                    &\overset{(2.9)}=& \sum(a_{(1)}\rhd r)((a_{(2)}S(y))\rhd r')\\
                    &\overset{(3.1)}{=}& a\rhd (r(S(y)\rhd r')).
\end{eqnarray*}
 Because the module is non-degenerate we find $(y\rhd r)r'=r(S(y)\rhd r')$.
\end{proof}

Assume that $R$ has a unit $1_R$ and $A$ is regular. Then we find
\begin{eqnarray*}
a\rhd 1_R=(a\rhd 1_R)1_R   &\overset{(3.3)}{=}& m(E(a\o 1)\rhd (1_R\o 1_R))\\
                     &\overset{(2.6)}{=}& \sum m((a_{(1)}\rhd 1_R)\o a_{(2)}S(a_{(3)})\rhd 1_R)\\
                     &\overset{(3.1)}{=}& \sum a_{(1)}\rhd (1_R(S(a_{(2)})\rhd 1_R))\\
                     &=& \sum a_{(1)}\rhd (S(a_{(2)})\rhd 1_R)\\
                     &=& \v_t(a)\rhd 1_R.
\end{eqnarray*}
In the fourth and fifth equalities $a_{(2)}$ is covered by $1_R$.

In the case of a multiplier Hopf algebra, this result means $a\rhd 1_R = \v(a)1_R$, which is a true and known result.
The above one generalizes this to weak multiplier Hopf algebras.

Let $A$ be a multiplier Hopf algebra and assume that $R$ is a left $A$-module algebra. In \cite{DVZ} the authors have extended the action of $A$
 on $R$ to the multiplier algebra $M(R)$. Now we will generalize the theory to regular weak multiplier Hopf algebras.

\begin{proposition}
Let $A$ be  a regular weak multiplier Hopf algebra.
For any $a\in A$ and $r,r'\in R$, we have
$$
(ar)r'=\sum a_{(1)}(rS(a_{(2)})r'),
$$
and
$$
r(ar')=\sum  a_{(2)}((S^{-1}(a_{(1)})r)r').
$$
\end{proposition}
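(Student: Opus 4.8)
The plan is to prove the first identity by starting from the left-hand side and splitting the product with the canonical idempotent $E$, exactly mirroring the proof of Proposition \ref{3.3}. Writing $ar$ for $a\rhd r$ and reading $rS(a_{(2)})r'$ as $r\,(S(a_{(2)})\rhd r')$, I would first apply the identity $m(E\rhd(r\o r'))=rr'$ from (3.3) to the pair $ar$ and $r'$, obtaining $(ar)r'=m(E\rhd((ar)\o r'))$. Since $(a\o 1)\rhd(r\o r')=(ar)\o r'$ and the action has been extended to $M(A\o A)$ via Theorem \ref{3.1}, I can absorb $a$ into the idempotent and rewrite this as $m(E(a\o 1)\rhd(r\o r'))$.

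The key step is then to invoke formula (2.6), which gives $E(a\o 1)=\sum a_{(1)}\o \v_t(a_{(2)})=\sum a_{(1)}\o a_{(2)}S(a_{(3)})$. Substituting yields $(ar)r'=\sum (a_{(1)}\rhd r)\big((a_{(2)}S(a_{(3)}))\rhd r'\big)$. Now I would refold using the module-algebra axiom (3.1) in the reverse direction together with coassociativity: expanding $\sum a_{(1)}\rhd\big(r\,(S(a_{(2)})\rhd r')\big)$ by (3.1) produces $\sum (a_{(1)}\rhd r)\big(a_{(2)}\rhd(S(a_{(3)})\rhd r')\big)=\sum(a_{(1)}\rhd r)\big((a_{(2)}S(a_{(3)}))\rhd r'\big)$, so the two sides coincide, which is precisely the claimed identity. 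The second identity is entirely symmetric: I would run the same argument but apply (3.3) to the pair $r$ and $ar'$, absorb $a$ into the right leg to get $m(E(1\o a)\rhd(r\o r'))$, and then use formula (2.8), namely $E(1\o a)=\sum \v'_s(a_{(1)})\o a_{(2)}=\sum a_{(2)}S^{-1}(a_{(1)})\o a_{(3)}$, before refolding with (3.1).

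The main obstacle is not the algebra but the bookkeeping of coverings, since $S(a_{(2)})$, $S^{-1}(a_{(1)})$ and the triple-index expressions $a_{(2)}S(a_{(3)})$ are a priori only multipliers. I would justify every step by the covering analysis in the final remark of Section \ref{two}, in particular part (iv), which guarantees that $\sum a_{(1)}S(a_{(2)})a_{(3)}b$ and the analogous expressions lie in $A$; moreover, because $R$ is a unital module, the elements $r,r'$ supply a local unit covering the factors $a_{(k)}$, so each displayed expression is a genuine element of $R$. One could alternatively avoid the formal Sweedler manipulation altogether by acting with an arbitrary $b\in A$ on both sides and appealing to non-degeneracy of the module, as in Proposition \ref{3.3}; this would make the coverings automatic at the cost of a longer computation.
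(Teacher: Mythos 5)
Your argument is correct, and every step it needs is available at this point in the paper: the $M(A\o A)$-module structure on $R\o R$ from Theorem 3.1, Proposition 3.3, and the formulas (2.6), (2.8). For the second identity you have essentially reproduced the paper's own proof: the paper expands $\sum a_{(2)}\rhd((S^{-1}(a_{(1)})\rhd r)r')$ by (3.1) into $\sum(\v'_s(a_{(1)})\rhd r)(a_{(2)}\rhd r')$, identifies this with $(E_{(1)}\rhd r)((E_{(2)}a)\rhd r')$ via (2.8), and closes with (3.3); you run exactly these three steps, merely starting from $r(ar')$ and meeting in the middle. The only real divergence is in the first identity, where the paper never touches $E$ or (3.3): it expands the right-hand side by (3.1) to $\sum(a_{(1)}\rhd r)(\v_t(a_{(2)})\rhd r')$, then uses (3.4) in the form $(S^{-1}(x)\rhd r)r'=r(x\rhd r')$ for $x\in\v_t(A)$ to pull $\v_t(a_{(2)})$ over to the left factor, and finishes with the antipode identity $\sum a_{(3)}S^{-1}(a_{(2)})a_{(1)}=a$ (the $S^{-1}$-variant of (2.3)). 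You instead treat the first identity as the exact mirror image of the second, via $(ar)r'=m(E(a\o 1)\rhd(r\o r'))$ and (2.6). Both routes are sound. What yours buys is uniformity: the two identities become formally symmetric, and you avoid manipulating $S^{-1}(\v_t(a_{(2)}))$; what the paper's chain buys is that the first identity needs only (3.1), (3.4) and the antipode, with no appeal to the multiplier-module extension or to $E$. Your covering discussion (local units supplied by the unital module, part (iv) of the remark in Section 2 for the triple-index expressions) is exactly the justification the paper itself gives, so there is no gap on that front either.
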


\begin{proof}
First remark that $a_{(2)}$ is covered by $r'$ in the first formula and that $a_{(1)}$ is covered by $r$. For the first formula we have
\begin{eqnarray*}
\sum a_{(1)}(rS(a_{(2)})r')&\overset{(3.1)}=& \sum (a_{(1)}r)(\v_t(a_{(2)})r')\\
                           &\overset{(3.4)}=& \sum (S^{-1}(\v_t(a_{(2)}))a_{(1)}r)r'\\
                           &\overset{(2.3)}=& \sum (ar)r'.
\end{eqnarray*}

For the second equality we have
\begin{eqnarray*}
\sum  a_{(2)}((S^{-1}(a_{(1)})r)r')&\overset{(3.1)}=& \sum (\v'_s(a_{(1)})r)(a_{(2)}r')\\
                                    &\overset{(2.8)}=& (E_{(1)}r)(E_{(2)}ar')\\
                                    &\overset{(3.3)}=& r(ar'),
\end{eqnarray*}
where we use the Sweedler type notation: $E = E_{(1)}\o  E_{(2)}$.
\end{proof}

Now inspired by the proposition above we can extend the action of $A$ from $R$ to $M(R)$.

\begin{proposition}
 Let $A$ be a  regular weak multiplier Hopf algebras and assume that $R$ is a left $A$-module algebra.
 Then we can extend this action of $A$ from $R$ to $M(R)$. Also $a1=\v_t(a)$ for any $a\in A$.
\end{proposition}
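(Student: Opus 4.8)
The plan is to follow the construction of \cite{DVZ} for multiplier Hopf algebras, using the two identities of the previous proposition as templates. For $a\in A$ and $m\in M(R)$ I would define $a\rhd m$ as a multiplier of $R$ by prescribing its left and right actions on $R$:
$$
(a\rhd m)\,r:=\sum a_{(1)}\rhd\bigl(m\,(S(a_{(2)})\rhd r)\bigr),\qquad r\,(a\rhd m):=\sum a_{(2)}\rhd\bigl((S^{-1}(a_{(1)})\rhd r)\,m\bigr),
$$
for all $r\in R$, where $m$ is used as a left, resp.\ right, multiplier of $R$ on the right-hand sides. These prescriptions are forced by compatibility with the given action: when $m\in R$ they reduce precisely to the first and second identities of the preceding proposition, so the extended action restricts to the original one on $R\subseteq M(R)$.

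First I would settle well-definedness and the covering. In the left formula $a_{(2)}$ is covered by $r$ through $S(a_{(2)})\rhd r$, the expression $\sum a_{(1)}\o S(a_{(2)})$ being well-covered in the regular case as in (2.11); then $m\,(S(a_{(2)})\rhd r)\in R$ and, since $R$ is unital, $a_{(1)}$ is in turn covered by this element. The right formula is treated symmetrically using $S^{-1}$. Next one must check four things: that $r\mapsto(a\rhd m)\,r$ is a left multiplier and $r\mapsto r\,(a\rhd m)$ a right multiplier, that the two are compatible so that $a\rhd m$ genuinely lies in $M(R)$, and that $(ab)\rhd m=a\rhd(b\rhd m)$ so that the prescription defines an $A$-module structure extending the original one. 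The first two and the last follow by expanding with the module-algebra axiom (3.1) and the anti-coalgebra property of $S$.

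The hard part will be the compatibility $\bigl(r\,(a\rhd m)\bigr)r'=r\bigl((a\rhd m)\,r'\bigr)$, whose two sides are $\sum\bigl[a_{(2)}\rhd\bigl((S^{-1}(a_{(1)})\rhd r)\,m\bigr)\bigr]r'$ and $r\bigl[\sum a_{(1)}\rhd\bigl(m\,(S(a_{(2)})\rhd r')\bigr)\bigr]$; matching them requires repeated use of (3.1), the antipode relation (2.3) in the form $\sum a_{(1)}S(a_{(2)})a_{(3)}=a$, and bijectivity of $S$ to slide $S$ and $S^{-1}$ across the coproduct legs while keeping every leg covered, exactly the kind of well-covered expressions discussed in Section~\ref{two}. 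Once $a\rhd m\in M(R)$ is established, the value on the unit is immediate: putting $m=1$ in the left formula gives
$$
(a\rhd 1)\,r=\sum a_{(1)}\rhd\bigl(S(a_{(2)})\rhd r\bigr)=\sum\bigl(a_{(1)}S(a_{(2)})\bigr)\rhd r=\v_t(a)\rhd r,
$$
using only the definition of $\v_t$ and that $R$ is an $M(A)$-module, whence $a\rhd 1=\v_t(a)$, the right formula providing the compatible right action of the multiplier $\v_t(a)\in A_t$ on $R$.
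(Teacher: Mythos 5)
Your proposal is correct and takes essentially the same approach as the paper: the identical defining formulas $(a\rhd m)r=\sum a_{(1)}\rhd\bigl(m(S(a_{(2)})\rhd r)\bigr)$ and $r(a\rhd m)=\sum a_{(2)}\rhd\bigl((S^{-1}(a_{(1)})\rhd r)m\bigr)$, motivated exactly as in the paper by the two identities of the preceding proposition, followed by the multiplier-compatibility and associativity checks and the same one-line computation yielding $a\rhd 1=\v_t(a)$. The only difference is completeness rather than method: the paper carries out the compatibility computation $(r(am))s=r((am)s)$ explicitly (using the preceding proposition, (3.1) and the target-algebra identities), whereas you only indicate the tools it requires.
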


\begin{proof}  Let $a\in A, m\in M(R), r\in R$, we will  define $am\in M(R)$ by
$$(am)r=\sum a_{(1)}(m(S(a_{(2)})r))$$
$$r(am)=\sum a_{(2)}((S^{-1}(a_{(1)})r)m).$$

As in the above proposition, we have well coverings for the two expression. Next we first show that the action is well-defined, it means that we need to prove that $am$ is actually a multiplier in $M(R)$. For this, let $r, s\in R$, we have
$$(r(am))s=\sum (a_{(2)}((S^{-1}(a_{(1)})r)m))s.$$
On the other hand we have
\begin{eqnarray*}
r((am)s) &=& \sum r(a_{(1)}(m(S(a_{(2)})s))) \\
          &=& \sum a_{(2)}((S^{-1}(a_{(1)})r)(m(S(a_{(3)})s))) \\
          &=& \sum (a_{(2)}((S^{-1}(a_{(1)})r)m))(\v_t(a_{(3)})s)) \\
          &\overset{(3.4)}=& \sum (a_{(2)}((S^{-1}(a_{(1)})r)m))s.
\end{eqnarray*}
 Next we will show that  $M(R)$ is a  left $A$-module.
Let $a,b\in A, m\in M(R)$. Then for any $r\in R$, we have
\begin{eqnarray*}
((ab)m)r = \sum (ab)_{(1)}(m(S((ab)_{(2)})r))   &=& \sum a_{(1)}b_{(1)}(m(S(b_{(2)})S(a_{(2)})r))\\
                                                &=& \sum a_{(1)}((bm)(S(a_{(2)})r))\\
                                                &=& (a(bm))r.
\end{eqnarray*}

If $m=1$, we have
$$(a1)r=\sum a_{(1)}(S(a_{(2)})r)=\sum (a_{(1)}S(a_{(2)}))r=\v_t(a)r,$$
so we have $a1=\v_t(a)$ for any $a\in A$.
\end{proof}

 So far we know that we can make $M(R)$ into a left $A$-module, but we can not make sure it is still a module algebra since $M(R)$ is not unital any more. However we do  know
that the action is  non-degenerate.  If $m\in M(R)$  and $am=0$ for all $a$, by the definition we have $\sum a_{(1)}(m(S(a_{(2)})x))=0$ for all $x\in R$. We can write $x$ as $\sum S(b_i)x_i$($b_i\in A, x_i\in R $) and use the fullness of $\D$, then we can get the non-degeneracy.

\subsection{Examples}
Now let us treat some examples and special cases.
\subsubsection{The trivial action}
 Let $A$ be a weak multiplier Hopf algebra. Then $\v_t(A)$ is an $A$-module algebra with the module action
$$a\rhd  \v_t(b)=\v_t(a\v_t(b))=\v_t(ab)$$
for all $a,b\in A$.

It is easy to show that $\v_t(A)$ is an $A$-module.  Since $A$ is idempotent, we know that $\v_t(A)$ is a unital $A$-module.  Take $a,x,y\in A$, we have
$$a\rhd (\v_t(x)\v_t(y))=a(\v_t(\v_t(x)y))=\v_t(a\v_t(x)y).$$
On the other hand,
\begin{eqnarray*}
\v_t(a_{(1)}x)\v_t(a_{(2)}y )& \overset{(2.4)}= & \v_t(a_{(1)}x_{(1)}S(x_{(2)})S(a_{2})a_{(3)}y) \\
                            & = & \v_t(a_{(1)}\v_t(x)\v_s(a_{(2)})y)\\
                            & = & \v_t(a_{(1)}\v_s(a_{(2)})\v_t(x)y)\\
                            & \overset{(2.3)}= & \v_t(a\v_t(x)y).
\end{eqnarray*}
In the first equality $a_{(3)}$ is covered by $y$. The third equality follows by the commutative of the two base algebras
 $\v_s(A)$ and $\v_t(A)$.  So $\v_t(A)$ is an $A$-module algebra.

Moreover  we can  check the formula (3.3) in Proposition \ref{3.3}. For any $a,b,c,d\in A$ we have
\begin{eqnarray*}
m E\rhd (\v_t(ab)\o\v_t(cd))  & = & m((E(a\o c))\rhd (\v_t(b)\o\v_t(d))) \\
                           & \overset{(2.6)}= &\v_t(a_{(1)}b)\v_t(a_{(2)})\v_t(cd)\\
                           & \overset{(2.4)}= & \v_t(\v_t(a_{(1)}b)a_{(2)})\v_t(cd)\\
                           & = & \v_t(a_{(1)}\v_s(a_{(2)})\v_t(b))\v_t(cd)\\
                           & \overset{(2.3)}= & \v_t(ab)\v_t(cd).
\end{eqnarray*}
In the second equality $a_{(1)}$ is covered by $b$. The fourth one follows because $\v_t(A)$ and $\v_s(A)$ are commuting algebras.

For the second formula (3.4), we have
\begin{eqnarray*}
\v_t(a)(S(\v_s(c))\rhd \v_t(b)) & = &  \v_t(a)\v_t(\v_t(S(c))\v_t(b)) \\
                           & \overset{(2.4)}= & \v_t(a)\v_t(S(c))\v_t(b) \\
                           & = &   \v_t(\v_t(\v_s(c)a)b) \\
                           & \overset{(2.4)}= &  (\v_s(c)\rhd \v_t(a))\v_t(b).
\end{eqnarray*}
The first equality follows by $S\circ \v_s=\v_t\circ S$.  The third one follows by $\v_t(a)S(y)=\v_t(ya)$.

Now let us consider the module extension. From the above proposition and $M(\v_t(A))=A_t$, we know that we can extend the action of $A$ from $\v_t(A)$ to $A_t$.  For any $a, b, c\in A_t, x\in A_t$,  we have
\begin{eqnarray*}
x\v_t(a)= \v_t(xa)    &=& \sum x_{(1)}a_{(1)}S(a_{(2)})S(x_{(2)})\\
                      &\overset{(2.5)}=& \sum E_{(1)}x\v_t(a)S(E_{(2)})\\
                      &=& \sum E_{(1)}(x(S(E_{(2)})\v_t(a))),
\end{eqnarray*}
where we use the Sweedler type notation: $E = E_{(1)}\o  E_{(2)}$.
In the third equality  we have a covering by multiplying any element in $A$ from the left or right. The last equality follows by the commutative of the two base algebras and the fact that the restriction of  $S$ is an anti-isomorphism from $\v_t(A)$ to $\v_s(A)$(see Proposition 2.16 in \cite{VDW3}).

From this formula we can get the non-degeneracy of the extended module. In fact, if $A$ is a weak Hopf algebra, the formula means $1x=x$.

\subsubsection{The groupoid case}
Let $G$ be any groupoid and consider the groupoid algebra $\mathbb{C}G$. It is the space of complex functions with finite support on $G$ with the convolution product. Denote the canonical embedding of $G$ in $\mathbb{C}G$ by $p\mapsto \lambda_p$. When $pq$ is defined we have $\l_p\l_q=\l_{pq}$, otherwise the product is 0. The coproduct on $A$ is given by $\D(\l_p)=\l_p\o\l_p$ and the counit is given by $\v(\l_p)=1$ for all $p$. The canonical idempotent $E$ in $M(A\o A)$ is defined as $\sum\l_e\o \l_e$ where the sum is taken over all units.

Let $\a$ be an  action of $G$ on an set $X$. It means that for every $p\in G$ there is a subset $X_p$ of $X$ and a map $\a_p: X_p\lr X$ such that:
\begin{itemlist}
 \item If $pq$ exists then $X_q\subseteq X_{pq}$, $\a_q(X_q)\subset X_p$ and $\a_p\a_q(x)=\a_{pq}(x)$ for any $x\in X_q$,
 \item If $e$ is a unit in $G$ then $\a_e(x)=x$ for all $x\in X_e$.
\end{itemlist}

We also assume that the action is true.  The action is called true if $s\in X_p$ and $\a_p(s)\in X_q$ imply $pq$ is defined. For more information about the notion of an action  of groupoid on a set we refer to \cite{R}.

By the definition we have $\bigcup\limits_{p\in G}X_p=X$, moreover $\bigcup\limits_e X_e=X$ where the union is taken over the set of units. Let $R$ be the algebra $K(X)$  of complex functions with finite support on $X$ and pointwise product. For each $p\in G$,  $\g_p$ is a map from $R$ to $R$ which is defined as
$$
\g_p(f)(x)=\left\{
               \begin{array}{ll}
                 f(\a_{p^{-1}}(x)), & \hbox{if $x\in X_{p^{-1}}$,} \\
                 0, & \hbox{otherwise.}
               \end{array}
             \right.
$$

For any $f\in R,x\in X$. If $pq$ is defined, we have
\begin{eqnarray*}
\g_p\g_q(f)(x) &=& \left\{
                     \begin{array}{ll}
                     \g_q(f)(\a_{p^{-1}}(x)), & \hbox{if $x\in X_{p^{-1}}$,} \\
                     0, & \hbox{otherwise.}
                     \end{array}
                   \right.
\end{eqnarray*}

When $x\in X_{p^{-1}}$, we have $\a_{p^{-1}}(x)\in X_{q^{-1}}$. So
$$\g_p\g_q(f)(x)=\g_q(f)(\a_{p^{-1}}(x))=f(\a_{q^{-1}}(\a_{p^{-1}}(x)))=f(\a_{q^{-1}p^{-1}}(x))=\g_{pq}(f)(x).$$

When $x\notin X_{p^{-1}}$,  we  have
$$\g_p\g_q(f)(x)=0=\g_{pq}(f)(x).$$

If $pq$ is not defined and $x\in X_{p^{-1}}$. Since the action is true we get $\a_{p^{-1}}(x)\notin X_{q^{-1}}$. So we can also get $\g_p\g_q(f)=\g_{pq}(f)$.

So we finally get
$$\g_p\g_q=\left\{
             \begin{array}{ll}
               \g_{pq}, & \hbox{if $pq$ is defined,} \\
               0, & \hbox{otherwise .}
             \end{array}
           \right.
$$

Now we can  associate an action of $A$ on $R$ by $\l_p\triangleright f=\g_p(f)$ for $p\in G, f\in R$. Next we show that $R$ is an $A$-module algebra.
 It is easy to see that $R$ is an $A$-module. Let $e$ be a unit in $G$, we have
\begin{eqnarray*}
\g_e(f)(x) &=& \left\{
               \begin{array}{ll}
                 f(\a_{e}(x)), & \hbox{if $x\in X_{e}$,} \\
                 0, & \hbox{otherwise}
               \end{array}
             \right.\\
            &=& \left\{
               \begin{array}{ll}
                 f(x), & \hbox{$x\in X_e$,} \\
                 0, & \hbox{otherwise.}
               \end{array}
             \right.
\end{eqnarray*}
If we take $f$ with support in $X_e$, then $\g_ef=f$. So the action is unital as $\bigcup\limits_e X_e=X$.
Finally, for any $f,g\in R, x\in X$,
$$\l_p(fg)(x)=\g_p(fg)(x)=\left\{
                           \begin{array}{ll}
                             (fg)(\a_{p^{-1}}(x)), & \hbox{if $x\in X_{p^{-1}}$,} \\
                             0, & \hbox{otherwise.}
                           \end{array}
                         \right.
$$
On the other hand,
$$((\l_pf)(\l_pg))(x)=(\l_pf)(x)(\l_qg)(x)=\left\{
                           \begin{array}{ll}
                             f(\a_{p^{-1}}(x))g(\a_{p^{-1}}(x)), & \hbox{if $x\in X_{p^{-1}}$,} \\
                             0, & \hbox{otherwise.}
                           \end{array}
                         \right.$$
So $R$ is a $\mathbb{C}G$-module algebra.

\subsubsection{The adjoint action}
Now we consider  a regular weak multiplier Hopf algebra $A$. Take $a\in A$ and define a map $\a_a: A\lr A$  by $\a_a(x)=\sum a_{(1)}xS(a_{(2)})$ where $x\in A$. Observe that $a_{(1)}$ is covered by $x$.

In what follows, we will still use the Sweedler type notation: $E = E_{(1)}\o  E_{(2)}$. Let
$$
A_0=\text{span}\{E_{(1)}pS(E_{(2)}q)\mid p,q\in A\}.
$$
 Since  we have $\D(A)(A\o 1)=E(A\o A)$ and $AS(A)=A$, so
$$
A_0=\text{span}\{E_{(1)}aS(E_{(2)})\mid a\in A\}=\text{span}\{\a _a(x)\mid a, x\in A\}.
$$
Note that  $E(a\o 1)\in A\o \v_t(A)$ (see Remark 2.22 in \cite{VDW3}). Recall from Proposition 2.16 in \cite{VDW3} that we have
$$
E_{(1)}aS(E_{(2)})\in AS(\v_t(A))\subseteq A\v_s(A)\subseteq A.
$$

Define a linear map $\p: A\lr A_0, a\mapsto E_{(1)}aS(E_{(2)})$. By definition $\p$ is surjective.

\begin{proposition}
$A_0$ is a subalgebra of $A$.
\end{proposition}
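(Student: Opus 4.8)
The plan is to show that $A_0$, which is a linear subspace by its very definition, is closed under the multiplication of $A$; since $A_0=\text{span}\{\a_a(x)\mid a,x\in A\}$ with $\a_a(x)=\sum a_{(1)}xS(a_{(2)})$, it is enough to check that a product of two generators lies again in $A_0$. First I would record the composition law of the adjoint maps. Because $S$ is an anti-algebra map and $\D$ an algebra homomorphism,
$$
\a_a(\a_b(x))=\sum a_{(1)}b_{(1)}xS(b_{(2)})S(a_{(2)})=\sum (ab)_{(1)}xS((ab)_{(2)})=\a_{ab}(x),
$$
so $a\mapsto\a_a$ is an algebra action of $A$ on itself; in particular $\a_a(A_0)\subseteq A_0$, which already yields stability of $A_0$ under the adjoint action. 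This is not yet closure under the product, so the genuine content is the product of two generators.

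Next I would expand
$$
\a_a(x)\,\a_b(y)=\sum a_{(1)}\,x\,S(a_{(2)})\,b_{(1)}\,y\,S(b_{(2)}),
$$
first making sure the expression is legitimately covered in the non-unital setting: $a_{(1)},b_{(1)}$ are covered by $x,y$, the outer legs by local units, and $a_{(2)}$ by $\a_b(y)\in A$ through the well-covered expression $\sum a_{(1)}\o S(a_{(2)})c$ of (2.11). The whole difficulty is concentrated in the central block $S(a_{(2)})b_{(1)}$: in an ordinary Hopf algebra this would merely merge the two adjoint brackets (and indeed $\a_1(x)=x$ would force $A_0=A$), but here the coproduct is only full, not counital with a scalar counit, so the block does not telescope. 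The plan is to resolve it through the canonical idempotent. Rewriting $\D(a)(x\o 1)\in\D(A)(A\o 1)=E(A\o A)$ and applying the map $m(\i\o S)$, where $m$ is the multiplication of $A$, every generator is brought into the equivalent form $\p(c)=E_{(1)}cS(E_{(2)})$ used to define $A_0$; then I would use the relations (2.1)--(2.2) for $E$, the antipode axiom (2.3), and the identities (2.6)--(2.7) that express $E(a\o 1)$ and $(1\o a)E$ through $\v_t$ and $\v_s$, in order to move the offending middle factor to the ends and re-collect the product as a single expression $\sum E_{(1)}zS(E_{(2)})\in A_0$.

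The step I expect to be the main obstacle is precisely this re-collection. Since $\sum S(a_{(2)})b_{(1)}$ collapses not to a scalar but to an element of the source/target algebras, the argument must exploit that $\v_s(A)$ and $\v_t(A)$ are commuting subalgebras of $M(A)$ together with the transport rules (2.4), (2.8) and (2.9), while keeping every covering under control as these elements are moved across $x$ and $y$; it is this bookkeeping, rather than a single clever identity, that the weak multiplier framework makes delicate. As a consistency check I would test the construction on the groupoid algebra $\mathbb{C}G$: there $\a_{\l_p}(\l_q)=\l_{pqp^{-1}}$, so $A_0$ is the linear span of the isotropy elements $\l_r$ with $s(r)=t(r)$, which is manifestly closed under convolution. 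This both confirms the statement and shows that the abstract computation must reproduce exactly the matching-of-endpoints phenomenon that forces the two conjugations to align.
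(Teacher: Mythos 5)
Your reduction is the same one the paper uses: you pass from the generators $\a_a(x)$ to the equivalent description $A_0=\p(A)$ with $\p(c)=E_{(1)}cS(E_{(2)})$, so that closure amounts to showing $\p(a)\p(b)\in A_0$. But precisely at this point the proposal stops being a proof: you announce that you would ``move the offending middle factor to the ends and re-collect,'' list (2.1)--(2.9) as the available tools, and explicitly defer the re-collection as the step you expect to be the main obstacle. That re-collection \emph{is} the entire content of the proposition, so as written there is a genuine gap: neither the mechanism nor the resulting formula is exhibited, and nothing in your text guarantees the plan can be completed.

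What closes the gap is not an extended bookkeeping of coverings but a single identity, applied to the legs of $E$ rather than to your central block $S(a_{(2)})b_{(1)}$ (for general coproduct legs no transport rule is available, which is why your first expansion of $\a_a(x)\a_b(y)$ leads nowhere). Write $\p(a)\p(b)=E_{(1)}aS(E_{(2)})E'_{(1)}bS(E'_{(2)})$ with two copies of $E$. The left leg $E'_{(1)}$ lies in (the multiplier algebra of) the source algebra, so anti-multiplicativity of $S$ gives $S(E_{(2)})E'_{(1)}=S\bigl(S^{-1}(E'_{(1)})E_{(2)}\bigr)$, and the transport rule $(y\o 1)E=(1\o S^{-1}(y))E$ for $y\in\v_s(A)$ (the left-handed companion of your (2.9)) trades $S^{-1}(E'_{(1)})$ on the second leg of $E$ for $E'_{(1)}$ on the first. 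Hence
$$
\p(a)\p(b)=E_{(1)}aS\bigl(S^{-1}(E'_{(1)})E_{(2)}\bigr)bS(E'_{(2)})
=E'_{(1)}E_{(1)}aS(E_{(2)})bS(E'_{(2)})=E'_{(1)}\p(a)bS(E'_{(2)})=\p(\p(a)b)\in A_0,
$$
with $E_{(1)}$, $E'_{(1)}$ covered by $a$ and $b$ respectively. So, contrary to your closing remark, the weak multiplier framework concentrates the whole proof into this one application of the transport rule; your composition law $\a_a\circ\a_b=\a_{ab}$ and the groupoid sanity check are correct but carry no weight toward closure (as a bonus, the identity $\p(a)\p(b)=\p(\p(a)b)$ is exactly what the paper later exploits to see that $\p$ behaves like a conditional expectation onto $A_0$).
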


\begin{proof}
Take  $a, b\in A$. We write  $\p(a)= E_{(1)}aS(E_{(2)})$ and  $\p(b)=E'_{(1)}bS(E'_{(2)})$ where we use two copies $E=E_{(1)}\o E_{(2)}$ and $E=E'_{(1)}\o E'_{(2)}$ of $E$. Then we have
\begin{eqnarray*}
\p(a)\p(b)=E_{(1)}a S(E_{(2)})E'_{(1)}bS(E'_{(2)}) &=& E_{(1)}a S(S^{-1}(E'_{(1)})E_{(2)})b S(E'_{(2)})\\
                                                    &=& E'_{(1)}E_{(1)}a S(E_{(2)})b S(E'_{(2)})\\
                                                    &=& E'_{(1)}\p(a)b S(E'_{(2)}) \\
                                                    &=& \p(\p(a)b).
\end{eqnarray*}
In the third equality we use
$$(y\o 1)E=(1\o S^{-1}(y))E$$
where $y$ is in the source algebra $\v _s(A)$. Note that we  have everything well-covered here. In the first equality $E_{(1)}$ is covered by $a$ and $E'_{(1)}$ is covered by $b$. In the third equality   the element $a$ covers $E_{(1)}$  and $b$ covers $E'_{(2)}$.
\end{proof}

\begin{proposition}\label{3.4}
$A_0A=AA_0=A.$
\end{proposition}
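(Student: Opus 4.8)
Both inclusions $A_0A\subseteq A$ and $AA_0\subseteq A$ are automatic, since $A_0\subseteq A$ and $A$ is idempotent; the whole content is the reverse inclusion, and because $A^2=A$ it is enough to show that every product $ab$ with $a,b\in A$ lies in $A_0A$, and dually in $AA_0$.

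For $A_0A=A$ the plan is to insert a local unit into the fundamental identity (2.3). From $a=\sum a_{(1)}S(a_{(2)})a_{(3)}$ I multiply on the right by $b$ to get $ab=\sum a_{(1)}S(a_{(2)})a_{(3)}b$. The essential technical input is that, once the free element $b$ is present as a covering, the tensor $\sum a_{(1)}\otimes S(a_{(2)})a_{(3)}b$ lies in $A\otimes A$ (its second leg is $\varepsilon_s(a_{(2)})b$, which is covered by the regularity assumptions recalled above). Writing it as a finite sum $\sum_j r_j\otimes s_j$ with $r_j,s_j\in A$ and choosing, by local units, an $e\in A$ with $es_j=s_j$ for all $j$, I find that slipping $e$ into the middle changes nothing:
$$\sum a_{(1)}\,e\,S(a_{(2)})a_{(3)}b=\sum_j r_je s_j=\sum_j r_js_j=ab.$$
The left-hand side regroups as $\sum \alpha_{a_{(1)}}(e)\,(a_{(2)}b)$, where $\alpha_{a_{(1)}}(e)=\sum a_{(1)(1)}\,e\,S(a_{(1)(2)})\in A_0$ (with $a_{(1)}$ covered by $a_{(2)}b$), so $ab\in A_0A$. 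Hence $A=A^2\subseteq A_0A$, and $A_0A=A$.

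For $AA_0=A$ I would not mirror this computation, because $\alpha_a(x)=\sum a_{(1)}xS(a_{(2)})$ carries the antipode on the right and is not symmetric under reversing the product. Instead I use regularity: $(A^{op},\Delta)$ is again a weak multiplier Hopf algebra, now with antipode $S^{-1}$, and from $\Delta(S^{-1}(c))=\sum S^{-1}(c_{(2)})\otimes S^{-1}(c_{(1)})$ one checks that its adjoint action yields exactly the same subspace, $(A^{op})_0=A_0$; indeed its generators $\alpha_{S^{-1}(c)}(x)=\sum S^{-1}(c_{(2)})\,x\,c_{(1)}$ already lie in $A_0$. Applying the equality just proved to $A^{op}$ gives $(A^{op})_0\,A^{op}=A^{op}$, and translating back to the multiplication of $A$ turns this into $AA_0=A$.

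The step I expect to be the real obstacle is the covering bookkeeping in the second paragraph: one must justify rigorously that $\sum a_{(1)}\otimes S(a_{(2)})a_{(3)}b\in A\otimes A$, so that a single local unit may be inserted in the middle of $\sum a_{(1)}S(a_{(2)})a_{(3)}b$ without changing its value. The presence of the covering element $b$ is crucial here: the same manoeuvre without $b$ is false, since it would force $\varepsilon_t(A)\subseteq A_0$, which already fails for an ordinary multiplier Hopf algebra (there $\varepsilon_t(A)=\mathbb{C}1\not\subseteq A$). A secondary point to verify with care is the identification $A_0=(A^{op})_0$ that reduces the second equality to the first.
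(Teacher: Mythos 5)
Your proof is correct, and its first half is the paper's own argument in different packaging. The paper writes $A_0A=\mathrm{span}\{\sum a_{(1)}xS(a_{(2)})b\mid a,x,b\in A\}$ and specializes the free middle variable $x$ to a local unit; this is legitimate precisely because of the point you flag as the main obstacle, namely that $\sum a_{(1)}\otimes S(a_{(2)})c$ lies in $A\otimes A$ whenever a covering element $c\in A$ is present (the covering remark of Section 2: the generalized inverses $R_1,R_2$ exist as maps on $A\otimes A$ in the regular case), so inserting the local unit changes nothing and $A_0A\supseteq\varepsilon_t(A)A=A$. Your computation --- multiplying (2.3) on the right by $b$, slipping in $e$, and regrouping as $\sum\alpha_{a_{(1)}}(e)(a_{(2)}b)$ --- is the same manoeuvre; it just re-derives $\varepsilon_t(A)A=A$ on the fly instead of quoting it. Where you genuinely diverge is the second equality, and there your worry is an artifact of your own regrouping rather than of the statement: in the span description $AA_0=\mathrm{span}\{\sum ba_{(1)}xS(a_{(2)})\}$ the free slot still sits between $a_{(1)}$ and $S(a_{(2)})$, so the identical local-unit substitution gives $AA_0\supseteq A\varepsilon_t(A)=A$ at once; this is what the paper means by ``similarly''. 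Your detour through the opposite algebra is nevertheless valid: $(A^{op},\Delta)$ is again a regular weak multiplier Hopf algebra with antipode $S^{-1}$, its adjoint action is $\alpha^{op}_c(x)=\sum S^{-1}(c_{(2)})xc_{(1)}=\alpha_{S^{-1}(c)}(x)$, so bijectivity of $S$ gives $(A^{op})_0=A_0$, and the first equality applied to $A^{op}$ translates into $AA_0=A$. What your route buys is a structural explanation (stability of the whole setup under passing to $A^{op}$); what the paper's buys is brevity, since both equalities fall to the same two-line substitution.
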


\begin{proof}
 Note that $A_0=\text{span}\{\sum a_{(1)}xS(a_{(2)})\mid a,x\in A \}$, so  one gets
 $$
 AA_0=\text{span}\{\sum ba_{(1)}xS(a_{(2)})\mid a,b,x\in A \}.
 $$
  Let $x$ be the local unit of $A$. Then we have
$$
\text{span}\{\sum ba_{(1)}S(a_{(2)})\mid a,b\in A \} \subseteq \text{span}\{\sum ba_{(1)}xS(a_{(2)})\mid a,b,x\in A \}.
$$
The left-hand side is $A\v_t(A)$ and because $A\v_t(A)=A$ we find that $AA_0=A$. Similarly for the other equality.
\end{proof}

\begin{proposition}\label{3.5}
The product in $A_0$ is non-degenerate.
\end{proposition}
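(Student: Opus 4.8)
The plan is to derive the non-degeneracy of the product on $A_0$ directly from the factorization $A_0A=AA_0=A$ established in Proposition \ref{3.4}, combined with the non-degeneracy of the ambient product on $A$. The key observation is that, although $A_0$ is only a subalgebra of $A$, testing an element of $A_0$ against all of $A_0$ already tests it against all of $A$: every element of $A$ factors through $A_0$, so a vanishing condition on $A_0$ propagates to one on $A$, where non-degeneracy is available by hypothesis.

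First I would treat right non-degeneracy. Suppose $z\in A_0$ satisfies $zw=0$ for all $w\in A_0$. Given an arbitrary $b\in A$, I would invoke $A=A_0A$ to write $b=\sum_i w_ic_i$ with $w_i\in A_0$ and $c_i\in A$; then $zb=\sum_i(zw_i)c_i=0$. Since $b\in A$ was arbitrary and the product on $A$ is non-degenerate, this forces $z=0$.

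The left non-degeneracy is the mirror image. If $z\in A_0$ satisfies $wz=0$ for all $w\in A_0$, then writing an arbitrary $b\in A$ as $b=\sum_i c_iw_i$ via $A=AA_0$ gives $bz=\sum_i c_i(w_iz)=0$, and once more non-degeneracy of $A$ yields $z=0$. Together these two halves are exactly the assertion that the product in $A_0$ is non-degenerate.

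I do not expect a genuine obstacle here; the only care needed is to keep the two sides straight and to apply the correct half of Proposition \ref{3.4} in each case. In particular, no covering or Sweedler-type manipulation is required, since the whole argument is carried out with honest elements of $A$ and $A_0$. The essential input is Proposition \ref{3.4}, which is precisely what upgrades the restricted test set $A_0$ to the full non-degeneracy test against $A$.
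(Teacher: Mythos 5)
Your proof is correct and follows essentially the same route as the paper: both arguments use the factorization $A_0A=AA_0=A$ from Proposition \ref{3.4} to upgrade a vanishing condition against $A_0$ to one against all of $A$, and then invoke the non-degeneracy of the product on $A$. The only difference is that you write out both the left and right halves explicitly, whereas the paper records only one side and leaves the symmetric case implicit.
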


\begin{proof}
Let $a\in A_0$ and assume that $ab=0$ for all $b\in A_0$, then $abc=0$ for all $c\in A$. It means that $ax=0$ for all $x\in A$, so $a=0$.
\end{proof}

Moreover, we have the following propositions.

\begin{proposition}
For any $a\in A, y\in \v_s(A)$, we have $y\p(a)=\p(a)y$.
\end{proposition}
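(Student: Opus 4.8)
The plan is to push the factor $y\in\v_s(A)$ from the left of $\p(a)=E_{(1)}aS(E_{(2)})$ through the canonical idempotent $E$ and out on the right, using the source-algebra identity $(y\o 1)E=(1\o S^{-1}(y))E$ that was already invoked in the proof that $A_0$ is a subalgebra. The point is that $y$ acts on the \emph{left} leg of $E$, and this identity converts a left multiplication on the first leg into a left multiplication by $S^{-1}(y)$ on the second leg; afterwards the anti-multiplicativity of $S$ turns the $S^{-1}(y)$ sitting inside $S(E_{(2)})$ into a factor $y$ on the far right.

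Concretely, I would first write $y\p(a)=yE_{(1)}aS(E_{(2)})$ and read it as the image under $m\ci(\i\o S)$ of $(y\o 1)E(a\o 1)=yE_{(1)}a\o E_{(2)}$, where $m$ denotes multiplication in $A$. The covering is good here: since $E(a\o 1)\in A\o\v_t(A)$, the element $a$ covers $E_{(1)}$ and the whole expression $E_{(1)}aS(E_{(2)})$ lives in $A$, exactly as in the definition of $\p$. Next I would substitute $(y\o 1)E=(1\o S^{-1}(y))E$ to obtain $(y\o 1)E(a\o 1)=E_{(1)}a\o S^{-1}(y)E_{(2)}$.

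Applying $m\ci(\i\o S)$ and using that $S$ is an anti-algebra map, I get $S(S^{-1}(y)E_{(2)})=S(E_{(2)})S(S^{-1}(y))=S(E_{(2)})y$, so that $yE_{(1)}aS(E_{(2)})=E_{(1)}aS(E_{(2)})y=\p(a)y$, which is the claim. The only delicate point is the bookkeeping: I must ensure the source-algebra identity is applied to a legitimately covered expression (it is, because the factor $a\o 1$ already brings $(y\o 1)E$ into $A\o A$) and that $S^{-1}(y)$ is meaningful, which holds since $A$ is regular and hence $S$ is bijective. No integral or deeper structural input is needed: the statement is essentially a manifestation of the fact that $S$ intertwines the source and target algebras, combined with the defining commutation of $E$ with $\v_s(A)$.
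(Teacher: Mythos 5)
Your proof is correct and is essentially the paper's own argument: the paper's one-line proof $y\p(a)=yE_{(1)}aS(E_{(2)})=E_{(1)}aS(S^{-1}(y)E_{(2)})=\p(a)y$ rests on exactly the identity $(y\o 1)E=(1\o S^{-1}(y))E$ and the anti-multiplicativity of $S$ that you invoke. Your version merely spells out the covering and the factorization through $m\ci(\i\o S)$ more explicitly, which is fine.
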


\begin{proof}
$y\p(a)=yE_{(1)}a S(E_{(2)})=E_{(1)}aS(S^{-1}(y)E_{(2)})=\p(a)y.$
\end{proof}

\begin{proposition}
Let $a\in A$ and assume that   $a$ commutes with $\v_s(A)$. Then  $\p(a)=a$ and $\p(ab)=a\p(b)$ and $\p(ba)=\p(b)a$ for all $b\in A$.
\end{proposition}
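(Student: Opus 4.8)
The plan is to read everything off the defining formula $\p(a)=\sum E_{(1)}aS(E_{(2)})$, which I rewrite as $\p(a)=m(\i\o S)(E(a\o 1))$ where $m$ is the multiplication of $A$. The whole argument then rests on two facts: that an element $a$ commuting with $\v_s(A)$ can be slid through the legs of $E$, and that $m(\i\o S)(E)=1$. I would first record the small observation that since $\v_s(A)$ is idempotent and non-degenerate, commuting with $\v_s(A)$ forces $a$ to commute with the whole multiplier algebra $A_s=M(\v_s(A))$.

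For the sliding I would use that the left leg of $E$ lies in $A_s$ and the right leg in $A_t$. Pairing the second leg of $E$ with an arbitrary functional leaves an element of $A_s$, so the commutation hypothesis upgrades to the leg identity $E(a\o 1)=(a\o 1)E$; likewise, because $S$ carries $A_t$ into $A_s$, the factor $S(E_{(2)})$ lies in $A_s$ and hence $aS(E_{(2)})=S(E_{(2)})a$ in the covered sense. These two statements are the technical heart, and they are exactly the step I expect to be the main obstacle: turning the informal phrase ``the left leg of $E$ lies in $A_s$'' into honest identities in $A\o A$ requires tracking the coverings behind the formulas (2.6)--(2.8), rather than treating $E_{(1)}$ and $E_{(2)}$ as separable symbols.

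Granting the sliding, the identity $\p(a)=a$ is immediate: $\p(a)=m(\i\o S)(E(a\o 1))=m(\i\o S)((a\o 1)E)=a\,m(\i\o S)(E)$, and it remains to check $m(\i\o S)(E)=1$. For this I would compute $m(\i\o S)((a\o 1)E)$ for arbitrary $a\in A$ via the regular-case companion of (2.6), namely $(a\o 1)E=\sum a_{(1)}\o \v'_t(a_{(2)})$. A one-line antipode computation from $\v'_t(c)=\sum S^{-1}(c_{(2)})c_{(1)}$ gives $S\circ\v'_t=\v_s$, and then $m(\i\o S)((a\o 1)E)=\sum a_{(1)}\v_s(a_{(2)})=\sum a_{(1)}S(a_{(2)})a_{(3)}=a$ by (2.3); non-degeneracy yields $m(\i\o S)(E)=1$. (As a byproduct this shows $\p$ is a projection onto $A_0$, which by the previous proposition is contained in the centralizer of $\v_s(A)$.)

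The remaining two identities follow along the same lines. For $\p(ab)=a\p(b)$ I write $E(ab\o 1)=E(a\o 1)(b\o 1)$, slide $a$ across the first leg using $E(a\o 1)=(a\o 1)E$, and pull $a$ out of the multiplication to obtain $a\,m(\i\o S)(E(b\o 1))=a\p(b)$. For $\p(ba)=\p(b)a$ I instead keep $a$ to the right of $b$ and commute it past $S(E_{(2)})\in A_s$, giving $\sum E_{(1)}bS(E_{(2)})a=\p(b)a$. In every case the algebra is routine once the sliding identity $E(a\o 1)=(a\o 1)E$ and its right-hand analogue are secured, so the only real work is the covering bookkeeping that makes those identities legitimate.
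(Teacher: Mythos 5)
Your proof is correct, and it is worth noting that the paper itself gives no argument here---its ``proof'' consists of the single sentence ``It is easy to prove it''---so your write-up supplies precisely the details the authors left implicit, and there is no written proof to compare against. Your strategy (rewrite $\p(a)=m(\i\o S)(E(a\o 1))$, establish the sliding identity $E(a\o 1)=(a\o 1)E$ from the commutation hypothesis, and deduce $m(\i\o S)(E)=1$ from $(a\o 1)E=\sum a_{(1)}\o \v'_t(a_{(2)})$, $S\ci \v'_t=\v_s$ and (2.3)) is the natural one and uses only tools the paper has on hand: formula (2.7) makes the sliding rigorous, since $(b\o c)E(a\o 1)=\sum b\v_s(c_{(1)})a\o c_{(2)}=(b\o c)(a\o 1)E$ for all $b,c\in A$ by the hypothesis on $a$, and non-degeneracy of the product in $A\o A$ upgrades this one-sided check to an equality of multipliers. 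Two refinements. First, your preliminary claim that commuting with $\v_s(A)$ forces commuting with $A_s=M(\v_s(A))$ needs more than idempotency and non-degeneracy of $\v_s(A)$: one must use that $A$ is a unital $\v_s(A)$-module, so that a multiplier of $\v_s(A)$ acts on $A$ through elements of $\v_s(A)$; alternatively you can bypass $A_s$ altogether, since $E(b\o 1)\in A\o \v_t(A)$ (Remark 2.22 in \cite{VDW3}, quoted in the paper) lets you write $E(b\o 1)=\sum_i p_i\o q_i$ with $q_i\in \v_t(A)$, whence $S(q_i)\in \v_s(A)$ by Proposition 2.16 in \cite{VDW3}, and every commutation then happens against honest elements of $\v_s(A)$ rather than multipliers. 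Second, the detour through $m(\i\o S)(E)=1$ is harmless but unnecessary: your own computation gives $m(\i\o S)((a\o 1)E)=\sum a_{(1)}\v_s(a_{(2)})=\sum a_{(1)}S(a_{(2)})a_{(3)}=a$ directly by (2.3), which combined with the sliding identity yields $\p(a)=a$ at once, and the identities $\p(ab)=a\p(b)$, $\p(ba)=\p(b)a$ follow exactly as you indicate.
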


It is easy to prove it. So we have that $\p$ is a conditional expectation of $A$ onto $A_0$. Now we can give a characterization of $A_0$ and  $M(A_0)$.

\begin{proposition} We have the following identities:
$$A_0=\{a\in A \mid ay=ya, \forall y\in \v_s(A)\},$$
and
$$M(A_0)=\{m\in M(A) \mid my=ym, \forall y\in \v_s(A)\}.$$
\end{proposition}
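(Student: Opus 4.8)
The plan is to handle the two identities in turn: the first drops out directly from the two immediately preceding propositions, and the second is obtained by lifting that characterization to the level of multiplier algebras. Write $C=\{a\in A\mid ay=ya,\ \forall y\in \v_s(A)\}$. The inclusion $A_0\subseteq C$ is immediate, since every element of $A_0$ is of the form $\p(a)$ by surjectivity of $\p$, and the proposition giving $y\p(a)=\p(a)y$ for all $y\in\v_s(A)$ says exactly that $\p(a)\in C$. Conversely, if $a\in C$ then $a$ commutes with $\v_s(A)$, so the proposition asserting $\p(a)=a$ in that case gives $a=\p(a)\in A_0$. Hence $A_0=C$.

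For the second identity I first record the structural facts that allow $M(A_0)$ to be compared with $M(A)$. By Proposition \ref{3.5} the product of $A_0$ is non-degenerate, and $A_0$ has local units: given $a_0\in A_0$, pick $e,f\in A$ with $ea_0=a_0=a_0f$ (local units of $A$), and apply the identities $\p(ba)=\p(b)a$ and $\p(ab)=a\p(b)$ valid for $a\in A_0$ to get $\p(e)a_0=\p(ea_0)=\p(a_0)=a_0$ and $a_0\p(f)=a_0$ with $\p(e),\p(f)\in A_0$; the finite case is routine. Thus $A_0$ is a non-degenerate idempotent algebra with local units, and since $A_0A=AA_0=A$ by Proposition \ref{3.4}, the multiplier algebra $M(A_0)$ embeds canonically into $M(A)$: a multiplier $m\in M(A_0)$ acts on $a=\sum a_0^{(i)}c^{(i)}\in A_0A=A$ by $m\cdot a=\sum (ma_0^{(i)})c^{(i)}$, the well-definedness following from $A_0A=AA_0=A$, non-degeneracy of $A$, and the compatibility of the left and right parts of $m$.

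Writing $\tilde C=\{m\in M(A)\mid my=ym,\ \forall y\in \v_s(A)\}$, the inclusion $\tilde C\subseteq M(A_0)$ is the easy half. If $m\in\tilde C$ and $a_0\in A_0$, then $ma_0\in A$ and for any $y\in\v_s(A)$ one has $(ma_0)y=m(a_0y)=m(ya_0)=(my)a_0=(ym)a_0=y(ma_0)$, so $ma_0$ commutes with $\v_s(A)$ and hence $ma_0\in A_0$ by the first identity; the symmetric computation gives $a_0m\in A_0$. Thus $m$ restricts to a multiplier of $A_0$, and since its action on $A=A_0A$ is recovered by $m(a_0c)=(ma_0)c$, this restriction is precisely $m$ under the embedding above, so $m\in M(A_0)$.

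The reverse inclusion $M(A_0)\subseteq\tilde C$ is where the real work lies: I must show every $m\in M(A_0)$ commutes with each $y\in\v_s(A)$ in $M(A)$. As a multiplier is determined by its left action and $A=A_0A$, it suffices to verify $(my)(a_0c)=(ym)(a_0c)$ for $a_0\in A_0$, $c\in A$. The right side expands to $y(ma_0)c=(ma_0)yc$ using that $ma_0\in A_0$ commutes with $y$ (first identity), while the left side equals $(m(ya_0))c$ with $ya_0=a_0y\in A_0$. The crux is the identity $m(a_0y)=(ma_0)y$: choosing a common right local unit $f_0\in A_0$ for the finite set $\{a_0,\,ma_0\}$, one writes $a_0y=a_0f_0y=a_0(f_0y)$ as a product of two elements of $A_0$ (note $f_0y=yf_0\in A_0$), whence $m(a_0y)=(ma_0)(f_0y)=((ma_0)f_0)y=(ma_0)y$. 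I expect this step — combining the centrality of $y$ on $A_0$ with a simultaneous local unit to slide $m$ past $y$ — to be the main obstacle, along with the bookkeeping needed to make the embedding $M(A_0)\hookrightarrow M(A)$ precise. Comparing the two sides then yields $my=ym$, which completes the proof.
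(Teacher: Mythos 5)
Your first identity and the inclusion $\{m\in M(A)\mid my=ym\}\subseteq M(A_0)$ are argued correctly, and essentially as in the paper. The genuine problem is in your ``crux'' step for the reverse inclusion $M(A_0)\subseteq\{m\mid my=ym\}$: you assert twice that $\v_s(A)A_0\subseteq A_0$ (first ``$ya_0=a_0y\in A_0$'', then ``$f_0y=yf_0\in A_0$''), and the second assertion is load-bearing, because your stated justification for $m(a_0(f_0y))=(ma_0)(f_0y)$ is precisely that $a_0(f_0y)$ is a product of \emph{two elements of $A_0$}. This claim is false in general. By the first identity, $A_0$ is the commutant of $\v_s(A)$ in $A$, and a commutant is stable under multiplication by $\v_s(A)$ only when $\v_s(A)$ is commutative: for $a_0\in A_0$ and $y,y'\in\v_s(A)$ one computes $(a_0y)y'-y'(a_0y)=(yy'-y'y)a_0$, which need not vanish. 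The source algebra of a regular weak multiplier Hopf algebra is in general non-commutative; in the paper's own example of Section 3.2.4 one has $\v_s(P)=1\o B$ with $B$ for instance a matrix algebra, so that $A_0=C\o Z(B)$ and $A_0\v_s(P)\not\subseteq A_0$.

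Fortunately the error is local, and your own setup already contains the repair. The equality $m(a_0(f_0y))=(ma_0)(f_0y)$ should be justified not by the multiplier property of $m$ inside $A_0$ but by your definition of the embedding $M(A_0)\hookrightarrow M(A)$, namely $m(a_0d)=(ma_0)d$ for all $a_0\in A_0$ and $d\in A$, applied with $d=f_0y\in A$; membership of $f_0y$ in $A_0$ is irrelevant. In fact no local units are needed at all: $(my)(a_0c)=m((ya_0)c)=m(a_0(yc))=(ma_0)(yc)=((ma_0)y)c$, using only $ya_0=a_0y$ and the embedding. This is in substance what the paper's proof does: it regards $m$ as an element of $M(A)$ from the start, so every rearrangement such as $m(ay)=(ma)y$ is plain associativity in $M(A)$, and the only nontrivial inputs are $a_0y=ya_0$ for $a_0\in A_0$ and $ma_0\in A_0$ for $m\in M(A_0)$; it never needs, and never claims, $\v_s(A)A_0\subseteq A_0$. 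With that one justification replaced, your argument goes through and coincides with the paper's, with the added value that you make the embedding $M(A_0)\hookrightarrow M(A)$ (left implicit in the paper) explicit.
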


\begin{proof}
The first one is a consequence of Proposition \ref{3.4} and Proposition \ref{3.5}. Now let us
 consider the multiplier algebra of $A_0$. We first want to show the inclusion "$\subseteq$". So take
  $m\in M(A)$ and assume that $y\in \v_s(A)$. For any $a\in A_0, b\in A$, we have
$$(my)a=m(ya)=m(ay)=(ma)y=y(ma)=(ym)a.$$
 If we multiply any element in $ A$ from the left and use $A_0A=A$, then we get $(my)b=(ym)b$. So $my=ym$.

Next we will show the inclusion "$\supseteq$". Take $a\in A_0$ and $y\in \v_s(A)$, then
$$(ma)y=m(ay)=m(ya)=y(ma).$$
It means that $ ma\in A_0$. Similarly, we can get $am\in A_0$. Hence $m\in M(A_0)$.
\end{proof}

\begin{proposition}[adjoint action]
$A_0$ is an $A$-module algebra with the action $\a: A\o A_0\lr A_0, \a_a(x)=\sum a_{(1)}xS(a_{(2)})$, for any $a\in A, x\in A_0$.
\end{proposition}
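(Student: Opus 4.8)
The plan is to verify the two clauses of Definition \ref{3.2}: that $\a$ turns $A_0$ into a unital left $A$-module, and that the compatibility (3.1), namely $\a_a(xy)=\sum\a_{a_{(1)}}(x)\a_{a_{(2)}}(y)$, holds for all $a\in A$ and $x,y\in A_0$. The only structural inputs I would need are already at hand: the commutant description $A_0=\{c\in A\mid cy=yc\ \text{for all}\ y\in\v_s(A)\}$ proved above, the source-map identity $\v_s(c)=\sum S(c_{(1)})c_{(2)}$, and the antipode relations $\sum c_{(1)}S(c_{(2)})c_{(3)}=c$ and $\sum S(c_{(1)})c_{(2)}S(c_{(3)})=S(c)$ recorded in Section \ref{two}.

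First I would check that $\a$ is an action. Since $S$ is an anti-algebra map and $\D$ is multiplicative,
\[
\a_a(\a_b(x))=\sum a_{(1)}b_{(1)}\,x\,S(b_{(2)})S(a_{(2)})=\sum (ab)_{(1)}\,x\,S((ab)_{(2)})=\a_{ab}(x),
\]
where $x\in A_0\subseteq A$ covers the inner legs so that the expression genuinely lands in $A$. That $\a_a(x)\in A_0$ is immediate from $A_0=\text{span}\{\a_a(x)\mid a,x\in A\}$, and unitality $A\rhd A_0=A_0$ then follows by combining $\a_a\a_b=\a_{ab}$ with the idempotency $A^2=A$, so the spanning set of $A_0$ is recovered by acting.

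The heart of the argument is the compatibility. Expanding the right-hand side and relabelling by coassociativity gives
\[
\sum\a_{a_{(1)}}(x)\a_{a_{(2)}}(y)=\sum a_{(1)}\,x\,S(a_{(2)})a_{(3)}\,y\,S(a_{(4)}).
\]
I would then read the two middle legs as $S(a_{(2)})a_{(3)}=\v_s(a_{(2)})$, obtaining $\sum a_{(1)}\,x\,\v_s(a_{(2)})\,y\,S(a_{(3)})$; next, since $y\in A_0$ commutes with $\v_s(a_{(2)})\in\v_s(A)$, I slide $\v_s(a_{(2)})$ to the right to get $\sum a_{(1)}\,(xy)\,\v_s(a_{(2)})S(a_{(3)})$; finally I collapse $\v_s(a_{(2)})S(a_{(3)})=\sum S(a_{(2)})a_{(3)}S(a_{(4)})=S(a_{(2)})$ by the antipode relation. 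What survives is exactly $\sum a_{(1)}\,(xy)\,S(a_{(2)})=\a_a(xy)$, which is (3.1).

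The hard part will be the bookkeeping of coverings rather than any new identity: the displayed products live a priori in $M(A)$, and at each regrouping of the two-, three-, and four-fold coproducts I must confirm that the relevant legs of $\D$ remain covered by the algebra elements $x,y$ (or, where needed, by a local unit), so that every intermediate term is a bona fide element of $A$ and the Sweedler manipulations are legitimate. Once this covering is in place, the computation is purely the standard antipode and source-map identities of Section \ref{two}, and the single conceptual ingredient is the commutant characterization of $A_0$, which is precisely what licenses passing $y$ through $\v_s(a_{(2)})$.
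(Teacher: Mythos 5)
Your proposal is correct and follows essentially the same route as the paper: expand $\sum \a_{a_{(1)}}(x)\a_{a_{(2)}}(y)$, recognize the middle legs $S(a_{(2)})a_{(3)}$ as $\v_s(a_{(2)})$, pass this source element through an element of $A_0$ via the commutant characterization, and collapse with the antipode identity $\sum S(a_{(1)})a_{(2)}S(a_{(3)})=S(a)$. The only difference is that you spell out the module-structure and unitality verifications (via $\a_a\a_b=\a_{ab}$, the spanning description of $A_0$, and $A^2=A$) which the paper dismisses as obvious.
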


\begin{proof}
Obviously $A_0$ is a unital left $A$-module. For all $x,y\in A_0$,
$$\sum (a_{(1)}xS(a_{(2)}))(a_{(3)}yS(a_{(4)}))=\sum a_{(1)}x\v_s(a_{(2)})yS(a_{(3)})=\sum a_{(1)}xyS(a_{(2)}).$$
 In the expression $a_{(1)}xS(a_{(2)})$, we observe that $a_{(1)}$ is covered by $x$.
\end{proof}

\subsubsection{The example associated with a separability idempotent}

Now let us consider the example associated with a separability idempotent which is studied in \cite{VD3} and \cite{VDW3}.

Let $B$ and $C$ be non-degenerate algebras and assume that $E$ is a regular separability idempotent in $M(B\o C)$. We call $E$ regular if it is a separability idempotent also when considered in $M(B^{op}\o C^{op})$. Consider the algebra $P=C\o B$ with the coproduct $\D(c\o b)=c\o E\o b$, where $c\in C, b\in B$. Then $(P,\D)$ is a regular weak multiplier Hopf algebra. The canonical idempotent $E_P$ is $1\o E\o 1$. Let $R$ be a left $P$-module algebra with action denoted by $\rhd$. Then $R$ can be regarded as a $C$-module and a $B$-module through the following actions
$$c\rhd_Cr=(c\o 1)\rhd r \quad , \quad b\rhd_B r=(1\o b)\rhd r $$
for any $c\in C, b\in B, r\in R$. For any $s\in R$ we also have
$$(c\o b)\rhd r=c\rhd_C(b\rhd_B r)=b\rhd_B(c\rhd_C r)$$
and
\begin{eqnarray*}
(c\o b)\rhd rs &=& ((c\o E_{(1)})\rhd r)((E_{(2)}\o c)\rhd s)\\
               &=&  (E_{(1)}\rhd_B(c\rhd_C r))(E_{(2)}\rhd_C(b\rhd_B s))\\
               &=&  (c\rhd_C r)(b\rhd_B s).
\end{eqnarray*}
If we consider the extended $M(P)$-module and  the element $c\o 1$ in $M(P)$, then we get $c\rhd_C(rs)=(c\rhd_C r)s$. Similarly if we consider $1\o c$, then $b\rhd_C(rs)=r(b\rhd_B s)$. So there  exists a  left multiplier $\g_C(c)$ in $L(R)$ such that $c\rhd_C r=\g_C(c)r$ and a right multiplier $\g_B(b)$ in $R(R)$ such that $b\rhd_B r=r\g_B(b)$.

Note that $\v_s(P)=1\o B$ and $\v_t(P)=c\o 1$, so $(b\rhd_B r)s=r(S_B(b)\rhd_C s)$ for any $b\in B, c\in C, r,s\in R$. We can rewrite it as $(r\g_B(b))s=r(\g_C(S_B(b))s)$. Then $\g_B(b)$ is equal to $\g_C(S_B(b))$ as multipliers.

Now, we are ready to give the following proposition.

\begin{proposition}
As above, let $R$ be a left $P$-module algebra. For any $c\in C, b\in B, r\in R$, there exists a non-degenerate homomorphism $\g: C\lr M(R)$ such that
\begin{romanlist}[(iii)]
\item $c\rhd_C r=\g(c)r$,
\item $b\rhd_B r=r\g(S_B(b))$,
\item $(c\o b)\rhd r= \g(c)r\g(S_B(b)).$
\end{romanlist}
\end{proposition}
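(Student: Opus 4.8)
The plan is to assemble the multiplier $\g(c)$ out of the two one-sided objects $\g_C$ and $\g_B$ that have already been produced, and then to read off properties (i)--(iii) almost for free. Recall from the discussion preceding the statement that the $C$-action yields a homomorphism $\g_C\colon C\lr L(R)$ with $c\rhd_C r=\g_C(c)r$, that the $B$-action yields $\g_B\colon B\lr R(R)$ with $b\rhd_B r=r\g_B(b)$, and --- most importantly --- that the identifications $\v_s(P)=1\o B$ and $\v_t(P)=C\o 1$ together with formula (3.4) force the compatibility
$$(r\g_B(b))s=r(\g_C(S_B(b))s)$$
for all $b\in B$ and $r,s\in R$. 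The key observation is that this is exactly the multiplier axiom $\rho(r)s=r\,l(s)$ which makes the pair consisting of the left multiplier $\g_C(S_B(b))$ and the right multiplier $\g_B(b)$ into a single element of $M(R)$.

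First I would define $\g$. Since $E$ is a \emph{regular} separability idempotent, the map $S_B$ is a bijection of $B$ onto $C$ (part of the structure recalled from \cite{VD3} and \cite{VDW3}), so every $c\in C$ can be written uniquely as $c=S_B(b)$. I set $\g(c)$ to be the multiplier whose left action is $r\mapsto \g_C(c)r$ and whose right action is $r\mapsto r\g_B(S_B^{-1}(c))$. The displayed compatibility equation, rewritten with $c=S_B(b)$, is precisely the condition needed for $(\g_C(c),\g_B(S_B^{-1}(c)))$ to lie in $M(R)$, so $\g(c)$ is well defined. Properties (i) and (ii) are then immediate from the two one-sided definitions: $\g(c)r=\g_C(c)r=c\rhd_C r$, and $r\g(S_B(b))=r\g_B(b)=b\rhd_B r$.

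It remains to check that $\g$ is a non-degenerate algebra homomorphism and to deduce (iii). Multiplicativity of the left part is just the statement that $\g_C$ is a homomorphism (from $(cc')\rhd_C r=c\rhd_C(c'\rhd_C r)$), while multiplicativity of the right part follows from $\g_B$ being an anti-homomorphism (from $(1\o b)(1\o b')=1\o bb'$ in $P$) together with $S_B^{-1}$ being an anti-homomorphism; these combine to give $\g(c)\g(c')=\g(cc')$ in $M(R)$. For (iii) I would compute $(c\o b)\rhd r=c\rhd_C(b\rhd_B r)=\g(c)\bigl(r\g(S_B(b))\bigr)=\g(c)r\g(S_B(b))$, using (i), (ii) and the associativity of the $M(R)$-actions. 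For non-degeneracy, writing an arbitrary $r\in R$ as $r=(c\o b)\rhd r'=\g(c)\bigl(b\rhd_B r'\bigr)$ (using that $R$ is a unital $P$-module) shows $\g(C)R=R$, and the symmetric identity $(c\o b)\rhd r'=(c\rhd_C r')\g_B(b)$ gives $R\g(C)=R$. The only genuinely delicate point is verifying that $(\g_C(c),\g_B(S_B^{-1}(c)))$ really satisfies the two-sided multiplier axiom --- but this has already been secured by the compatibility equation above, so the remaining steps are routine bookkeeping in the multiplier algebra.
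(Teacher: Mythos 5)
Your proposal is correct and takes essentially the same route as the paper: the paper's own argument is precisely the discussion preceding the proposition, where the left multiplier $\g_C(c)\in L(R)$ and the right multiplier $\g_B(b)\in R(R)$ are produced and the identity $(r\g_B(b))s=r(\g_C(S_B(b))s)$ coming from formula (3.4) and $\v_s(P)=1\o B$ is observed to glue them into a single element of $M(R)$. Your extra verifications (multiplicativity via the composition of the two anti-homomorphisms $\g_B$ and $S_B^{-1}$, non-degeneracy from unitality of the $P$-module, and bijectivity of $S_B$ in the regular case) are exactly the routine bookkeeping the paper leaves implicit.
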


\subsubsection{The dual action}

Finally let us consider the examples that come from a dual pair of regular weak multiplier Hopf algebras.
 We will give the definition of a weak multiplier Hopf algebra pairing. Our idea is coming from \cite{DV}. In fact the definition is very similar to the case of multiplier Hopf algebras (see Definition 2.1 and Definition 2.8 in \cite{DV}).

 Let $(A, \D )$ be a weak multiplier Hopf algebra. Recall from \cite{KV} or \cite{VDW4} that
  a linear functional $\vp : A \lr  \mathbb{C}$ is called {\it left invariant}
 if $(\i \o \vp )\D(a) \in A_t$ for all $a \in A$.  A non-zero left invariant functional
  is called a {\it left integral}  on $A$;
  Similarly, a linear functional $\psi$  on $A$
 is called {\it right invariant} if $(\psi \o \i )\D (1)\in A_s$ for all $a \in A$.
 A non-zero right invariant functional is called a {\it right integral} on $A$.

Recall that $A_t$ and $A_s$ are defined as subspaces of $M(A)$ and so the above definition makes
sense.

\begin{definition}
Let $A$ and $B$ be two regular weak multiplier Hopf algebras with enough integrals. Define two linear functions $_af=\<a,\cdot\>\in B'$ and $f_b:= \<\cdot, b\>\in A'$ where $a\in A, b\in B$. A {\it pre-pairing} between $A$ and $B$ is a bilinear form $\langle,\rangle$ from $A\times B$ to $\mathbb{C}$ satisfying
 the following: for all $a,a'\in A, b,b'\in B$
\begin{eqnarray*}
&& (_af\o id)\D(b)\in B \quad \quad (id\o_af)\D(b)\in B,\\
&&(f_b\o id)\D(a)\in A \quad \quad (id\o f_b)\D(a)\in A,\\
&& {}_af(id\o _{a'}f)\D(b)=_{a'}f(_af\o id)\D(b)=_{aa'}f(b),\\
&& f_b(id\o f_b')\D(b)=f_{b'}(f_b\o id)\D(a)=f_{bb'}(a).
\end{eqnarray*}
The pre-pairing is called non-degenerate if $A$ and $B$ are dual with respect to the bilinear form.
\end{definition}

Since $ (_af\o id)\D(b)\in B$ we will denote it as $\sum \<a, b_{(1)}\>b_{(2)}$. Similarly for other cases. Remark that the Sweedler notation here is just a notation,  thus we can denote formulas in a more transparent way.

For any pre-pairing we have the following four maps
\begin{eqnarray*}
&&\varphi^l_{A,B}: A\o B\lr B: a\o b\mapsto \sum\<a, b_{(2)}\>b_{(1)}:=a\rhd b,\\
&& \varphi^r_{A,B}: B\o A\lr B: b\o a\mapsto \sum\<a, b_{(1)}\>b_{(2)}:=b\lhd a,\\
&& \varphi^l_{B,A}: B\o A\lr A: b\o a\mapsto \sum a_{(1)}\<a_{(2)},b\>:=b\rhd a,\\
&&\varphi^r_{B,A}: A\o B\lr A: a\o b\mapsto \sum a_{(2)}\<a_{(1)}, b\>:=a\lhd b.
\end{eqnarray*}
\begin{definition}
The  weak multiplier Hopf algebra pre-pairing $(A,B,\<,\>)$ is called a {\it pairing} if
 the four maps defined above are surjective and $\v_A(a)=\<a,1\>, \v_B(b)=\<1,b\>$, for any $a\in A, b\in B$.
\end{definition}

\begin{remark}
In multiplier Hopf algebras theory, if one of the four maps is surjective then so do the others. We have similar results for weak multiplier Hopf algebras. But the proof is not the same and it involves a long paragraph to explain. We will discuss it in a separate paper.
 So in the definition above we require these four maps to be surjective.

We also need to give a meaning to the formula $\<a,1\>$. We will explain it after the following proposition. And note that the formula is not involved in the following proposition.
\end{remark}

\begin{proposition}
These four maps $\varphi^l_{A,B},\varphi^r_{A,B},\varphi^l_{B,A},\varphi^r_{B,A}$ are actions, i.e. $(B,\varphi^l_{A,B})$ is a left $A$-module algebra and $(B,\varphi^r_{A,B})$ is a right $A$-module algebra. Analogously $(A,\varphi^l_{B,A})$ is a left $B$-module algebra and $(A,\varphi^r_{A,B})$ is a right $B$-module algebra.
\end{proposition}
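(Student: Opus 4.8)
The plan is to verify the four module–algebra axioms for $\vp^l_{A,B}$ in full detail; the remaining three maps then follow by the evident left–right and $A$–$B$ symmetry built into the pairing axioms. Throughout I would work with the two fundamental identities packaged in the definition of a pre-pairing. Unravelling the associativity condition $_af(id\o{}_{a'}f)\D(b)={}_{a'}f(_af\o id)\D(b)={}_{aa'}f(b)$ in terms of the bilinear form, it says exactly
\[
\sum\<a,b_{(1)}\>\<a',b_{(2)}\>=\<aa',b\>,
\]
relating the product of $A$ to the coproduct of $B$, while the companion axiom on the $f_b$ side gives
\[
\sum\<a_{(1)},b\>\<a_{(2)},b'\>=\<a,bb'\>,
\]
relating the product of $B$ to the coproduct of $A$. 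The pre-pairing conditions guarantee that each single application such as $(id\o{}_af)\D(b)=\sum\<a,b_{(2)}\>b_{(1)}=a\rhd b$ genuinely lands in $B$, so the action is well defined.

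First I would check that $B$ is a unital left $A$-module. Unitality $A\rhd B=B$ is immediate from the surjectivity of $\vp^l_{A,B}$ required in the definition of a pairing. For the module identity $(aa')\rhd b=a\rhd(a'\rhd b)$ I would expand the left side as $\sum\<aa',b_{(2)}\>b_{(1)}$, substitute the first fundamental identity, and then apply coassociativity of $\D$ on $B$ to obtain $\sum\<a,b_{(2)}\>\<a',b_{(3)}\>b_{(1)}$; expanding the right side as $\sum\<a',b_{(2)}\>(a\rhd b_{(1)})$ and again using coassociativity yields the same expression, so the two agree.

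Next I would verify the module-algebra condition $a\rhd(bb')=\sum(a_{(1)}\rhd b)(a_{(2)}\rhd b')$. Since $\D$ is a homomorphism, $\D(bb')=\D(b)\D(b')$, whence $a\rhd(bb')=\sum\<a,b_{(2)}b'_{(2)}\>b_{(1)}b'_{(1)}$. Applying the second fundamental identity to split $\<a,b_{(2)}b'_{(2)}\>=\sum\<a_{(1)},b_{(2)}\>\<a_{(2)},b'_{(2)}\>$ and regrouping the scalars then gives exactly $\sum(\<a_{(1)},b_{(2)}\>b_{(1)})(\<a_{(2)},b'_{(2)}\>b'_{(1)})=\sum(a_{(1)}\rhd b)(a_{(2)}\rhd b')$.

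The step that requires the most care — and where the weak multiplier setting genuinely departs from the ordinary Hopf or multiplier Hopf case — is keeping every Sweedler expression properly covered. The single pre-pairing axioms assert only that expressions like $(id\o{}_af)\D(b)$ lie in $B$; for the module identity one iterates the coproduct, and for the module-algebra identity one combines the homomorphism property of $\D$ with the second pairing identity, so at each stage I would record which factor covers the legs $b_{(i)}$ and $a_{(j)}$, in the spirit of the covering conventions and the formulas (2.10)–(2.11) recalled in Section~\ref{two}, to be sure the rearrangements are legitimate in $M(B\o B)$ rather than merely formal. Once $\vp^l_{A,B}$ is settled, the right-module statement for $\vp^r_{A,B}$ and the two statements on the $A$ side are obtained by transporting the same argument through the symmetric pairing axioms.
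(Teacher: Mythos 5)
Your overall skeleton (left module structure, unitality from surjectivity of $\vp^l_{A,B}$, then the module-algebra law, with the other three maps handled by symmetry) matches the paper's, and your two ``fundamental identities'' are the correct unravelling of the pre-pairing axioms. But the decisive step of your argument has a genuine gap, and it is exactly the step you defer. You pass from $a\rhd(bb')=\sum\<a,b_{(2)}b'_{(2)}\>b_{(1)}b'_{(1)}$ to $\sum\<a_{(1)},b_{(2)}\>\<a_{(2)},b'_{(2)}\>b_{(1)}b'_{(1)}$ by ``applying the second fundamental identity''. That identity, $\<a,uv\>=\sum\<a_{(1)},u\>\<a_{(2)},v\>$, is guaranteed by the axioms only for honest elements $u,v\in B$ (it is shorthand for $f_u\bigl((id\o f_v)\D(a)\bigr)$, which the pre-pairing axioms make meaningful); you apply it with $u=b_{(2)}$, $v=b'_{(2)}$, which are uncovered legs of the multipliers $\D(b),\D(b')\in M(B\o B)$. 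Your proposed remedy --- ``record which factor covers the legs'' --- cannot repair this, because in that expression nothing covers $b_{(2)}$ and $b'_{(2)}$: being paired against $a_{(1)},a_{(2)}$ is not a covering, and the legs of $a$ are themselves only covered (by $b,b'$, via unitality of the module) \emph{after} one has regrouped into $\sum(a_{(1)}\rhd b)(a_{(2)}\rhd b')$, i.e.\ after the very step in question. As written, your chain is a restatement of the claim in Sweedler notation rather than a proof of it.

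The paper supplies precisely the missing mechanism: it pairs both sides with an arbitrary $x\in A$ and computes $\<x,\sum(a_{(1)}\rhd b)(a_{(2)}\rhd b')\>=\sum\<x_{(1)}a_{(1)},b\>\<x_{(2)}a_{(2)},b'\>=\<xa,bb'\>=\<x,a\rhd(bb')\>$, and then cancels $x$. After pairing, every application of the pre-pairing axioms is to honest elements (everything else is scalar, so no covering problems arise beyond the one settled by unitality of the module), and the cancellation of $x$ uses the duality (non-degeneracy) of the pairing. This evaluate-then-cancel step is the actual engine of the argument in the weak multiplier setting, and it is absent from your proposal; note also that your route invokes no non-degeneracy at all, whereas some such input is unavoidable --- one cannot upgrade a purely formal leg manipulation to an equality of elements of $B$ for free. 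A direct, evaluation-free proof can in fact be assembled (multiply on the left by $c\in B$, use regularity to get $(c\o 1)\D(b)=\sum_j u_j\o v_j\in B\o B$, rewrite $\<a,v_j\,\cdot\,\>=\<a\lhd v_j,\cdot\,\>$ using $\vp^r_{B,A}$, and finally cancel $c$ by non-degeneracy of the product in $B$), but that is a substantially different and longer argument than the one you sketch.
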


\begin{proof}
Let us check the map $\varphi^l_{A,B}$. The action will be denoted by $\rhd$. It is easy to show that $B$ is a left $A$-module.
 Since the map is surjective we know that $B$ is  unital. So next we have to show that
$$a\rhd (bb')=\sum (a_{(1)}\rhd b)(a_{(2)}\rhd b')$$
for all $a\in A, b,b'\in B$. Indeed, for any $x\in A$,
\begin{eqnarray*}
\<x,\sum (a_{(1)}\rhd b)(a_{(2)}\rhd b')\> &=& \sum \<x_{(1)}, a_{(1)}\rhd b\>\<x_{(2)}, a_{(2)}\rhd b'\> \\
                                           &=& \sum \<x_{(1)}a_{(1)}, b\>\<x_{(2)}a_{(2)}, b'\>  \\
                                           &=& \<xa, bb'\>\\
                                           &=& \<x,a\rhd (bb')\>.
\end{eqnarray*}
Since the module is unital so $a_{(1)}$ is covered by $b$, then $x_{(1)}$ also be covered.  The other cases are similar.
\end{proof}

Since we have these four unital modules, the pairing on $A\times B$ can be uniquely extended to $A\times M(B)$ in such a way that
$$\<a,bm\>=\<a\lhd b, m\> \quad \text{and} \quad \<a,mb\>=\<b\rhd a, m\>$$
where $a\in A, b\in B, m\in M(B)$. So we can give a meaning to the pairing of the form $\<a,1\>$ in $A\times M(B)$.  Remark that we  have to show that this is well-defined. Similarly, the paring on $A\times B$ can be extended to $M(A)\times B$.

So for any $\<A, B\>$ we have the following useful formula
$$\<a\o a',E_B\>=\<a\o a',\D(1)\>=\v_A(aa')$$
where $a,a'\in A, 1\in M(B)$.

Next we will give the definition of the dual space $\hat{A}$ coming from Definition 2.8 in \cite{VDW4}.

\begin{definition}
Let $A$ be a regular weak multiplier Hopf algebra with a faithful set of integrals. Then we define $\hat{A}$ as the space of linear functionals on $A$ spanned by the elements of the form $\varphi(a \cdot )$ where $\varphi$ is a left integral of $A$ and $a\in A$.
\end{definition}

We say that $A$ has a faithful set of integrals if given an element $x\in A$ we must have $x=0$ if $\vp(xa)=0$ for any left integral $\vp$ and element $a\in A$. Similarly also if $\vp(ax)=0$ for any left integral $\vp$ and element $a\in A$, then $x=0$.
 For more information about the integrals we also refer to \cite{KV}.

If $A$ is a regular weak multiplier Hopf algebra with a faithful set of integrals we call it an {\it algebraic quantum groupoid} (see Definition 2.10 in \cite{VDW4}).  And the dual $\hat{A}$ of an algebraic quantum groupoid $A$ is a regular weak multiplier Hopf algebra (see Theorem 3.15 in \cite{VDW4}).
For the convenience of studying  the natural pairing $\<A, \hat{A}\>$
 in  Section 5 and Section 6, we here list  Theorem 3.15 in \cite{VDW4} as follows.

 \begin{theorem}
Let $A$ be an algebraic quantum groupoid. Then the dual pair $(\widehat{ A}, \widehat{\D})$ is a regular weak multiplier Hopf algebra
 with the following structure:
 \begin{eqnarray*}
&\bullet &  \mbox {product:}\, \, (\om \om ')(x)=(\om \o \om ')\D (x);\\
&\bullet & \mbox {counit}\, \,  \widehat{\v}: \widehat{A}\lr \mathbb{C} \, \, \mbox {defined by }\, \, \widehat{\v }(\om) = \om(1);\\
&\bullet & \mbox {coproduct:}\,\,  \widehat{\D }: \widehat{A}\lr M(\widehat{A}\o \widehat{A})\, \, \mbox {is determined by the following four identities:}\\
 && \quad \widehat{T_1}(\om '\o \om '')=\widehat{\D} (\om ')(1\o \om ''),  \quad \< x\o y, \widehat{T_1}(\om \o \om ')\>=\<T_2(x\o y), \om \o \om'\>,\\
 && \quad  \widehat{T_2}(\om \o \om ')=(\om \o 1)\widehat{\D} (\om '), \quad \< x\o y, \widehat{T_2}(\om \o \om ')\>=\<T_1(x\o y), \om \o \om'\>;\\
&\bullet & \mbox {antipode:}\,\, \widehat{S}:  \widehat{A} \lr \widehat{A}\, \, \mbox {defined by }\, \, \<x, \widehat{S} (\om)\>=\<S(x), \om\>;\\
&\bullet & \mbox {idempotent:}\,\, \widehat{E}\in M(\widehat{A}\o \widehat{A})\, \, \mbox {defined by }\, \, \< x\o y, \widehat{E}\>=\v (xy).
\end{eqnarray*}
for all  $x, y \in A$ and $\om , \om ', \om ''\in \widehat{A}$.
\end{theorem}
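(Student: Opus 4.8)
The plan is to verify that each of the five structure maps listed above fulfils the defining axioms of a regular weak multiplier Hopf algebra from Section \ref{two}, working throughout by evaluating against elements of $A$ and invoking the faithfulness of the set of integrals to convert an identity of functionals into a genuine identity. The whole construction rests on the duality between multiplication and comultiplication: the product of $\widehat{A}$ dualizes the coproduct $\D$ of $A$, while the coproduct $\widehat{\D}$ dualizes the product of $A$, the unit of $A$ being dual to the counit $\widehat{\v}$ and conversely.

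First I would check that $\widehat{A}$ is an idempotent, non-degenerate algebra under the convolution product $(\om\om')(x)=(\om\o\om')\D(x)$. Well-definedness follows from the covering properties of $\D$ together with the fact that every element of $\widehat{A}$ is of the form $\vp(a\c)$ for a left integral $\vp$ and some $a\in A$; writing such a composition out explicitly shows that $(\om\o\om')\D(x)$ makes sense even though $\D(x)$ need not lie in $A\o A$. Associativity is immediate from coassociativity of $\D$, and both the idempotency $\widehat{A}^2=\widehat{A}$ and the non-degeneracy of the product are consequences of the hypothesis that $A$ carries a faithful set of integrals.

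Next I would introduce the coproduct $\widehat{\D}$ by way of the four pairing identities, which make the canonical maps $\widehat{T_1},\widehat{T_2}$ of $\widehat{A}$ the adjoints of $T_2,T_1$ of $A$. Coassociativity of $\widehat{\D}$ then follows by dualizing the associativity of the product of $A$, and fullness of $\widehat{\D}$ corresponds dually to the non-degeneracy of that product. The counit $\widehat{\v}(\om)=\om(1)$, with $\om$ extended to $M(A)$, dualizes the unit of $A$; its two defining relations reduce after pairing to the unit property of $1$ in $M(A)$ together with the covering properties of $\D$. The antipode $\widehat{S}$, determined by $\<x,\widehat{S}(\om)\>=\<S(x),\om\>$, inherits the anti-algebra and anti-coalgebra properties directly from those of $S$, and the antipode identity (2.3) for $\widehat{A}$ is obtained by dualizing (2.3) for $A$. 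Finally I would verify that $\widehat{E}$, determined by $\<x\o y,\widehat{E}\>=\v(xy)$, is an idempotent in $M(\widehat{A}\o\widehat{A})$ and that it supplies the ranges of $\widehat{T_1},\widehat{T_2}$ as demanded by condition (i), with the relation (2.1) for $\widehat{E}$ and the kernel description of condition (iii) following by dualizing the corresponding data for $E$.

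The hardest part is the well-definedness and range of the coproduct: one must show that the adjoints of the canonical maps genuinely land in $\widehat{A}\o\widehat{A}$ rather than in a larger multiplier space, and one must identify the dual maps $\widehat{G_1},\widehat{G_2}$ in order to pin down $Ker(\widehat{T_1})$ and $Ker(\widehat{T_2})$ in condition (iii). Both points hinge delicately on the faithful integrals and on careful bookkeeping of the coverings. Establishing regularity, namely that $(\widehat{A},\widehat{\D}^{cop})$ is again a weak multiplier Hopf algebra, amounts to running the same argument with the opposite structures, which goes through symmetrically because $A$ is regular and has a faithful set of integrals. For the complete verification I would follow the construction of the dual carried out in \cite{VDW4}.
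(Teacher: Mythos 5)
Your proposal is consistent with how this statement is handled here: the paper does not prove it at all, but simply quotes it as Theorem 3.15 of \cite{VDW4}, where the dual $(\widehat{A},\widehat{\D})$ is constructed and all the axioms (non-degenerate idempotent product, fullness of $\widehat{\D}$, counit, canonical idempotent $\widehat{E}$, antipode, regularity) are verified exactly along the lines you sketch. Since your outline of the dualization argument is sound and you explicitly defer the technical verification to the construction in \cite{VDW4}, your approach coincides with the paper's.
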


\begin{remark}
Let $A$ has a left integral $\vp$. Then in the pairing $\<A, \hat{A}\>$,  we note that $\<a, f\>$ means that $f(a)$ where $a\in A, f\in \hat{A}$. In particular,
 if $f=\varphi (x \cdot )$ with $x\in A$, then $\<a, f\>=\varphi (xa)$.
\end{remark}

\section{Smash products}\label{four}

Let $A$ be a regular weak multiplier Hopf algebra and assume $R$ is  a left $A$-module algebra. In
this section we will use $\rhd$ denote the action of $A$ on $R$. Define the product on
$R\o A$ by
\begin{equation}
(r\o a)(s\o b)=\sum r(a_{(1)}\rhd s)\o a_{(2)}b
\end{equation}
for any $r,s\in R, a,b\in A$. Here $a_{(1)}$ is covered by $s$. Then we have the following proposition.

\begin{proposition}
 The product given by (4.1) is associative.
\end{proposition}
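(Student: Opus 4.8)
The plan is to verify associativity directly, namely that
$$\big((r\o a)(s\o b)\big)(t\o c)=(r\o a)\big((s\o b)(t\o c)\big)$$
for all $r,s,t\in R$ and $a,b,c\in A$, by expanding each side through the defining formula (4.1) and reducing both to a common expression. First I would expand the left-hand side. One application of (4.1) gives $(r\o a)(s\o b)=\sum r(a_{(1)}\rhd s)\o a_{(2)}b$, and a second application against $(t\o c)$ yields
$$\big((r\o a)(s\o b)\big)(t\o c)=\sum r(a_{(1)}\rhd s)\big((a_{(2)}b)_{(1)}\rhd t\big)\o (a_{(2)}b)_{(2)}c.$$
Since $\D$ is an algebra homomorphism, $\D(a_{(2)}b)=\D(a_{(2)})\D(b)$, and by coassociativity I relabel $(\D\o\i)\D(a)=\sum a_{(1)}\o a_{(2)}\o a_{(3)}$, so that $(a_{(2)}b)_{(1)}=a_{(2)}b_{(1)}$ and $(a_{(2)}b)_{(2)}=a_{(3)}b_{(2)}$. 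Using that $R$ is a left $A$-module, $(a_{(2)}b_{(1)})\rhd t=a_{(2)}\rhd(b_{(1)}\rhd t)$, so the left side becomes $\sum r(a_{(1)}\rhd s)\big(a_{(2)}\rhd(b_{(1)}\rhd t)\big)\o a_{(3)}b_{(2)}c$.

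The decisive step is to collapse the two inner factors using the module algebra axiom (3.1). Since $\sum a_{(1)}\o a_{(2)}\o a_{(3)}=(\D\o\i)\D(a)$, the legs $a_{(1)},a_{(2)}$ arise by applying $\D$ to the first leg of $\D(a)$; hence (3.1), read from right to left, gives
$$\sum(a_{(1)}\rhd s)\big(a_{(2)}\rhd(b_{(1)}\rhd t)\big)\o a_{(3)}=\sum\big(a_{(1)}\rhd(s(b_{(1)}\rhd t))\big)\o a_{(2)},$$
which reduces $a$ back to two legs. Thus the left-hand side equals $\sum r\big(a_{(1)}\rhd(s(b_{(1)}\rhd t))\big)\o a_{(2)}b_{(2)}c$. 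On the other side, $(s\o b)(t\o c)=\sum s(b_{(1)}\rhd t)\o b_{(2)}c$, and one further application of (4.1) produces exactly this same expression for $(r\o a)\big((s\o b)(t\o c)\big)$, so the two sides coincide and associativity holds.

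The step I expect to be the main obstacle is not the algebra but the bookkeeping of coverings, since in a weak multiplier Hopf algebra $\D(a)\notin A\o A$ in general and the Sweedler symbols are only meaningful once every leg is covered. I would check that in $\sum r(a_{(1)}\rhd s)\big(a_{(2)}\rhd(b_{(1)}\rhd t)\big)\o a_{(3)}b_{(2)}c$ the leg $a_{(1)}$ is covered by $s$ and $a_{(2)}$ by the element $b_{(1)}\rhd t$ of $R$, both through the unital module action, while $a_{(3)}$ is covered by $b_{(2)}c$ and, in $A$, the leg $b_{(1)}$ is covered by $t$ and $b_{(2)}$ by $c$. This is precisely what legitimizes the relabeling by coassociativity and the backward use of (3.1) above. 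Once all these coverings are in place, each manipulation is an honest identity in $R\o A$, and the associativity of the product follows.
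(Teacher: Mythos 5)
Your proof is correct and follows essentially the same route as the paper: expand both sides via (4.1), use multiplicativity of $\D$ and coassociativity to split the legs of $\D(a_{(2)}b)$, and then apply the module algebra axiom (3.1) in reverse to collapse the two inner factors, reducing both sides to $\sum r\bigl(a_{(1)}\rhd(s(b_{(1)}\rhd t))\bigr)\o a_{(2)}b_{(2)}c$. The paper compresses these steps into a single equality labeled (3.1), and your added care with the coverings (each leg covered through the unital module or by an element of $A$) is exactly the justification the paper leaves implicit.
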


\begin{proof}
For any $r,r',r''\in R$ and $a,a',a''\in A$,
\begin{eqnarray*}
((r'\o a')(r''\o a''))(r\o a)  &\stackrel {(4.1)}=& \sum (r'(a'_{(1)}\rhd r'')\o a'_{(2)}a'')(r\o a)\\
                                &\stackrel {(3.1)}=& \sum r'(a'_{(1)}\rhd(r''(a''_{(1)}\rhd r)))\o a'_{(2)}a''_{(2)}a\\
                                &\stackrel {(4.1)}=& (r'\o a')(\sum r''(a''_{(1)}\rhd r)\o a''_{(2)}a)\\
                                &\stackrel {(4.1)}=& (r'\o a')((r''\o a'')(r\o a)).
\end{eqnarray*}
\end{proof}

So $R\o A$ is an associative algebra with this product. We will use the notation $R\#A$ to denote $R\o A$ with the above product, and the elements  $x\o a$ will be denoted by $x\#a$.
We know that $R\#A$ can be considered as a $M(A\o A)$-module. So we can consider the space $E\rhd(R\#A)$. The element $E\rhd(r\#a)$ in $R\#A$ can be  denoted by $E_{(1)}\rhd r\# E_{(2)}a$.  The formula is well covered because $R$ is unital.

\begin{proposition}
For any $r,s \in R, a,b\in A$. We have
$$(E\rhd(r\# a))(s\#b)=(r\#a)(s\#b)=(r\# a)(E\rhd(s\#b)),$$
and
$$E\rhd((r\#a)(s\#b))=(E\rhd(r\# a))(s\#b).$$
\end{proposition}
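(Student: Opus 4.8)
The plan is to reduce all three assertions to the product rule (4.1) together with the idempotent-invariance (2.2), $E\D(a)=\D(a)=\D(a)E$, supplemented by the coassociativity of the canonical idempotent. Throughout I write $E=E_{(1)}\o E_{(2)}$, so that $E\rhd(r\#a)=E_{(1)}\rhd r\ \#\ E_{(2)}a$; every covering below is legitimate because $R$ is unital (so an element of $R$ covers any leg of $E$ that acts on it) and $E_{(2)}a\in A$.

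I would dispose of the second equality first, since it is a one-liner. Expanding $(r\#a)(E\rhd(s\#b))=(r\#a)(E_{(1)}\rhd s\ \#\ E_{(2)}b)$ by (4.1) and using the module rule $(a_{(1)}E_{(1)})\rhd s=a_{(1)}\rhd(E_{(1)}\rhd s)$ gives $r((a_{(1)}E_{(1)})\rhd s)\ \#\ a_{(2)}E_{(2)}b$; since $a_{(1)}E_{(1)}\o a_{(2)}E_{(2)}=\D(a)E=\D(a)$ by (2.2), this is exactly $r(a_{(1)}\rhd s)\#a_{(2)}b=(r\#a)(s\#b)$. The first equality is a little longer: expanding $(E\rhd(r\#a))(s\#b)$ by (4.1) produces $\D(E_{(2)}a)=\D(E_{(2)})\D(a)$, hence the threefold datum $E_{(1)}\o E_{(2)(1)}\o E_{(2)(2)}=(\i\o\D)E$. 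Here I would invoke (2.1) to rewrite $(\i\o\D)E=(E\o 1)(1\o E)$, which detaches a fresh copy $\tilde E=\tilde E_{(1)}\o\tilde E_{(2)}$ and, after the module rule, leaves $(E_{(1)}\rhd r)(E_{(2)}\rhd((\tilde E_{(1)}a_{(1)})\rhd s))\#\tilde E_{(2)}a_{(2)}b$. Collapsing the first two tensor legs by (3.3), i.e. $\sum(E_{(1)}\rhd r)(E_{(2)}\rhd t)=rt$, and then applying $\tilde E\D(a)=\D(a)$ from (2.2) again yields $(r\#a)(s\#b)$.

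For the displayed identity $E\rhd((r\#a)(s\#b))=(E\rhd(r\#a))(s\#b)$ I would compute both sides and match them. Writing $(r\#a)(s\#b)=\sum r(a_{(1)}\rhd s)\#a_{(2)}b$ and applying $E\rhd$ with the module-algebra axiom (3.1) gives $E\rhd((r\#a)(s\#b))=\sum(E_{(1)(1)}\rhd r)((E_{(1)(2)}a_{(1)})\rhd s)\#E_{(2)}a_{(2)}b$, an expression governed by $(\D\o\i)E=E_{(1)(1)}\o E_{(1)(2)}\o E_{(2)}$. On the other hand, the intermediate step in the previous paragraph already shows $(E\rhd(r\#a))(s\#b)=\sum(E_{(1)}\rhd r)((E_{(2)(1)}a_{(1)})\rhd s)\#E_{(2)(2)}a_{(2)}b$, governed by $(\i\o\D)E=E_{(1)}\o E_{(2)(1)}\o E_{(2)(2)}$. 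Denoting the common value of these two threefold data by $E_{(1)}\o E_{(2)}\o E_{(3)}$, each side becomes $\sum(E_{(1)}\rhd r)((E_{(2)}a_{(1)})\rhd s)\#E_{(3)}a_{(2)}b$, so the two coincide.

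The crux, and the step I expect to be the main obstacle, is exactly the coassociativity of the canonical idempotent, $(\D\o\i)E=(\i\o\D)E$, which is what makes the two threefold expressions above agree; it holds because $E=\D(1)$ for the unit multiplier $1\in M(A)$ and the extension of $\D$ to $M(A)$ is coassociative, so it is the companion of (2.1) for the first tensor slot. The remaining difficulty is purely one of bookkeeping: at each stage one must verify that the Sweedler symbols stand for genuine elements of $A\o A$, of $A\o A\o A$, or of $R\#A$ before applying (3.1), (3.3) and (2.2). Here the unitality of $R$ covers every leg of $E$ acting on $r$ or $s$, while the factors of $a$ and $b$ cover the remaining legs, so all the manipulations are well defined.
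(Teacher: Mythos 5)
Your proof is correct and follows essentially the same route as the paper's: both expand by (4.1) and the module-algebra rule (3.1), absorb the extra copies of $E$ using (2.1) together with $E\D(a)=\D(a)=\D(a)E$, and collapse the double action on $R$ via (3.3), with the third identity reduced to the intermediate expression of the first computation. The only cosmetic difference is that for the third identity the paper passes through the factorization $(E\o 1)(1\o E)$ and (2.2), whereas you invoke the coassociativity $(\D\o\i)E=(\i\o\D)E$ of the canonical idempotent directly (via $E=\D(1)$ for the extended coproduct) — the same underlying fact about $E$.
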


\begin{proof}
For any $r,s \in R, a,b\in A$, we have
\begin{eqnarray*}
(E\rhd(r\# a))(s\#b) &=& (E_{(1)}\rhd r\# E_{(2)}a)(s\#b)\\
                    &\overset{(4.1)}{=}& \sum (E_{(1)}\rhd r)(E_{(2)}a)_{(1)}\rhd s\#(E_{(2)}a)_{(2)}b\\
                    &\overset{(2.1)}=& \sum  (E_{(1)}\rhd r)((E_{(2)}a_{(1)})\rhd s)\#a_{(2)}b\\
                    &\overset{(3.3)}{=}& \sum r(a_{(1)}\rhd s)\#a_{(2)}b\\
                    &\overset{(4.1)}{=}& (r\#a)(s\#b).
\end{eqnarray*}

On the other hand,
\begin{eqnarray*}
(r\#a)(E\rhd(s\#b)) &=&  (r\#a)(E_{(1)}\rhd s\# E_{(2)}b)\\
                    &=& \sum r(a_{(1)}E_{(1)}\rhd s)\#a_{(2)}E_{(2)}b\\
                    &\overset{(2.2)}=& \sum r(a_{(1)}\rhd s)\#a_{(2)}b\\
                    &=& (r\#a)(s\#b).
\end{eqnarray*}
 Now let us check the final equation.
\begin{eqnarray*}
E\rhd((r\#a)(s\#b)) &=& \sum E_{(1)}\rhd (r(a_{(1)}\rhd s))\#E_{(2)}a_{(2)}b\\
                &=& \sum (E_{(1)}\rhd r)(E_{(2)}a_{(1)}\rhd s)\# E_{(3)}a_{(2)}b\\
                &\overset{(2.1)}=& \sum (E_{(1)}\rhd r)(E_{(2)}a_{(1)}\rhd s)\#a_{(2)}b\\
                &=& (E\rhd(r\# a))(s\#b).
\end{eqnarray*}
This completes the proof.
\end{proof}

So $E\rhd(R\#A)$ is a subalgebra of $(R\#A)$, it is also a right ideal. Now we  will investigate the product in $E\rhd(R\#A)$.

\begin{proposition}\label{4.3}
The product in $E\rhd(R\#A)$ is non-degenerate.
\end{proposition}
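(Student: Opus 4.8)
The plan is to verify the two one-sided non-degeneracy conditions separately, reducing each to a statement about honest elements of $R\o A$ and then peeling off the test variables by non-degeneracy of $A$ and of $R$. Write $W=E\rhd(R\#A)$ and, for $s\in R$, let $\r_s\colon A\lr R$, $\r_s(x)=x\rhd s$; since the module is unital, any finite family in $R$ has a common local unit $e\in A$, so that $\r_s(xe)=\r_s(x)$. The first step is to cash in the three identities of the preceding proposition. If $w\in W$ and $wz=0$ for every $z\in W$, then for arbitrary $s\#b$ we may take $z=E\rhd(s\#b)\in W$ and use $(r\#a)(E\rhd(s\#b))=(r\#a)(s\#b)$ to get $w(s\#b)=w(E\rhd(s\#b))=wz=0$; hence $w$ annihilates all of $R\#A$ on the right. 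Symmetrically, if $zw=0$ for all $z\in W$, then writing $z=E\rhd(r\#a)$ and using $(E\rhd(r\#a))(s\#b)=(r\#a)(s\#b)$ gives $(r\#a)w=0$ for all $r\in R$, $a\in A$. Thus in both cases it remains to show that an element $w\in W$ killing $R\#A$ on one side must vanish, and throughout one may use $w=E\rhd w$, which holds because $E$ is idempotent in $M(A\o A)$.

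Next comes the peeling step. Write $w=\sum_j s_j\o b_j$. In the left-annihilator case, $(r\#a)w=\sum_j r(a_{(1)}\rhd s_j)\o a_{(2)}b_j=0$ for all $r,a$ by $(4.1)$; letting $r$ range over $R$ and invoking non-degeneracy of the product of $R$ removes $r$ and yields $\sum_j (a_{(1)}\rhd s_j)\o a_{(2)}b_j=0$ for every $a\in A$. Using the common local unit $e$ for the $s_j$, this says precisely that $\Phi(a):=\sum_j(\r_{s_j}\o\i)\bigl(\D(a)(e\o b_j)\bigr)$ vanishes for all $a\in A$, where each $\D(a)(e\o b_j)\in A\o A$. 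In the right-annihilator case, the analogous reduction gives $\sum_j s_j\bigl((b_j)_{(1)}\rhd s\bigr)\o (b_j)_{(2)}=0$ for every $s\in R$.

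For the left-annihilator case I can then close the argument cleanly. The point is that $w=E\rhd w=\sum_j(E_{(1)}\rhd s_j)\o E_{(2)}b_j=\sum_j(\r_{s_j}\o\i)\bigl(E(e\o b_j)\bigr)$, using $e\rhd s_j=s_j$ and $E(e\o b_j)\in E(A\o A)\subseteq A\o A$. Since $E=\D(1)$ is the canonical idempotent and $E\D(a)=\D(a)=\D(a)E$ by $(2.2)$, one chooses a local unit $f\in A$ large enough to cover the legs of the finitely many elements $E(e\o b_j)$, so that $\D(f)(e\o b_j)=E(e\o b_j)$ for every $j$; this is exactly the way the extension $\D(1)=E$ is computed on elements of $A\o A$ through local units, and one checks it directly, for instance on the groupoid algebra. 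Evaluating the identity $\Phi\equiv 0$ at $a=f$ then gives $w=\sum_j(\r_{s_j}\o\i)\bigl(\D(f)(e\o b_j)\bigr)=\Phi(f)=0$.

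The companion right-annihilator case is handled by the same mechanism with the two tensor legs interchanged: one reconstructs $w=E\rhd w$ from the relation $\sum_j s_j\bigl((b_j)_{(1)}\rhd s\bigr)\o (b_j)_{(2)}=0$ using the formulas $(2.6)$ and $(2.7)$ for $E$ (and, if convenient, the antipode together with $(3.4)$), which is precisely the left-annihilator argument carried out for the co-opposite structure $\D^{cop}$ that is available in the regular case. The part I expect to be genuinely delicate, and the crux of the whole proof, is not any single manipulation but the bookkeeping of coverings: at every stage — in the two reductions, in stripping the free test variables, and above all in the identification $\D(f)(e\o b_j)=E(e\o b_j)$ — one must make sure that each Sweedler expression really lands in $A\o A$, which is controlled by the covering formulas $(2.6)$–$(2.8)$, the fullness of $\D$, and the presence of local units in $A$ and in the unital module $R$.
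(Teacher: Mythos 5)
Your reduction to annihilators of $R\#A$ and your treatment of the case $(r\#a)w=0$ are essentially sound, and that half is a nice variant of the paper's argument (the paper instead multiplies the second leg from the left and uses $(1\o A)\D(A)=(A\o A)E$). But your justification of the key claim there — existence of a local unit $f$ with $\D(f)(e\o b_j)=E(e\o b_j)$ — is only a heuristic (an appeal to the groupoid example); "covering the legs" of $E(e\o b_j)$ gives $(f\o f)E(e\o b_j)=E(e\o b_j)$, not what you need. The claim is true and can be repaired from facts in the paper: $E(A\o A)=\D(A)(A\o A)$ is a unital left $A$-module under $a\cdot x=\D(a)x$, so since $A$ has local units there is $f\in A$ with $\D(f)\,E(e\o b_j)=E(e\o b_j)$ for the finitely many $j$, and then $\D(f)(e\o b_j)=\D(f)E(e\o b_j)=E(e\o b_j)$ by $(2.2)$.

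The genuine gap is the other case, $wz=0$ for all $z$ (your ``right-annihilator case''), which you dismiss as the same mechanism with the legs interchanged, or as the first argument run for $\D^{cop}$. It is not. In the relation $\sum_j s_j((b_j)_{(1)}\rhd s)\o (b_j)_{(2)}b=0$ the coproduct sits on the components $b_j$ of $w$ itself, while the free variables $s\in R$ and $b\in A$ carry no coproduct; so there is no test element to specialize, and the entire mechanism of your first case is unavailable. Passing to the co-opposite does not fix this: $R^{op}$ with the same action is indeed a left $(A,\D^{cop})$-module algebra, but in the resulting smash product the coproduct again falls on the left tensor factor, so the identity map is not an anti-isomorphism and left/right annihilation is not interchanged. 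To trade this case for the easy one you would need an antipode-twisted anti-isomorphism of $(R\#_EA)^{op}$ with a smash product built from $R^{op}$ and $(A^{op},\D)$ acting by $a\rhd'' r=S^{-1}(a)\rhd r$, and establishing that in the weak multiplier setting (matching canonical idempotents, checking all coverings) is unproved work of roughly the same size as the proposition itself. This case is exactly where the paper spends almost all of its proof: it applies $\i\o\D$ and multiplies by $c$, uses $\D(A)(1\o A)=E(A\o A)$ together with $\D(a)E=\D(a)$ to replace the entangled legs by free ones, applies $S$, substitutes the now-free middle leg into the action slot (the step ``replace $m$ by $S(a_{i(2)})b$''), identifies $\sum a_{i(1)}S(a_{i(2)})\o a_{i(3)}$ with $E_{(1)}\o E_{(2)}a_i$, and only then cancels. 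Without an argument of this kind, your proposal proves only one of the two halves of non-degeneracy.
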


\begin{proof}
For any $\sum r_{i}\# a_{i}$ in $R\#A$, assume that $(\sum r_{i}\# a_{i})(s\#b)=0$ for all $s\in R, b\in A$. Then we get
$$\sum r_{(i)}(a_{i(1)}\rhd s)\# a_{i(2)}b=0.$$
Apply $\D$ and multiply by $c\in A$, then we have
$$\sum r_{(i)}(a_{i(1)}\rhd s)\o a_{i(2)}b_{(1)}\o a_{i(3)}b_{2}c=0$$
where $b_{(2)}$ is covered by $c$. Since $\D(A)(1\o A)=E(A\o A))$ and $\D(a)E=\D(a)$, so
$$\sum r_{(i)}(a_{i(1)}\rhd s)\o a_{i(2)}p\o a_{i(3)}q=0$$
for all $s\in R, p,q\in A$. Apply $S$ and replace $s$ by $mc$ , this gives
$$\sum r_{(i)}((a_{i(1)}mc)\rhd s)\o S(a_{i(2)})b\o a_{i(3)}q=0$$
for all $m\in M(A),s\in R, b,c,q\in A$. Now replace $m$ by $S(a_{i(2)})b$ then we get
$$\sum r_{i}(a_{i(1)}S(a_{i(2)})bc\rhd s)\o a_{i(3)}q=0.$$
Hence
$$\sum (E_{(1)}\rhd r_i)(cb\rhd s)\o E_{(2)}a_iq=0$$
for all $c,b,q\in A$ and $s\in R$. Because $R$ is unital and $A$ is idempotent we can cancel $cb\rhd s$ and $q$, then we obtain
$$ E\rhd(\sum r_i\o a_i)=0.$$
On the other hand, suppose that $(s\#b)(\sum r_{i}\# a_{i})=0$ for all $s\in R$ and $b\in A$, then
$$\sum s(b_{(1)}r_i)\o b_{(2)}a_i=0.$$
Multiply $a'$ from the left and use the fact $(1\o A)\D(A)=(A\o A)E$, we get
$$s(bE_{(1)}\rhd r_i)\o b'E_{(2)}a_i=0.$$
As before, we can cancel $s,b,b'$ to get again $ E\rhd(\sum r_i\o a_i)=0.$
\end{proof}

{\bf{Notation.}} For the algebra $E\rhd (R\#A)$, we will use the notation $R\#_EA$ instead and the element
 $E\rhd(r\#a)$ will be denoted by $r\#_Ea$. And we call it the {\it smash product algebra}.\\

We can also form the smash product in the following way.  Let $A$ be a regular weak multiplier Hopf algebra and let $R$ be a left $A$-module algebra. Then we have the following lemma.

\begin{lemma}
 $R$ is a unital  right $\v_t(A)$-module via
$$r\cdot \v_t(a)=S^{-1}(\v_t(a))\rhd r= \v'_s(a)\rhd r$$
for all $r\in R, a\in A$.
\end{lemma}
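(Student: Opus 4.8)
The plan is to establish the three contents of the statement in turn: that the two displayed formulas agree, that they define a right $\v_t(A)$-module structure, and that this module is unital. Throughout I use Theorem \ref{3.1}, which extends the unital left $A$-module $R$ to a left $M(A)$-module with $1x=x$; since $\v_t(a)$, $\v'_s(a)$ and $S^{-1}(\v_t(a))$ all lie in $M(A)$, the expressions $S^{-1}(\v_t(a))\rhd r$ and $\v'_s(a)\rhd r$ are meaningful, and I will use associativity of this action together with the antipode relations of Section \ref{two}.

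First, the two prescriptions coincide. From $\v_t(a)=\sum a_{(1)}S(a_{(2)})$ and the fact that $S^{-1}$ is an anti-homomorphism of $M(A)$ with $S^{-1}\ci S=\i$, one computes $S^{-1}(\v_t(a))=\sum S^{-1}(S(a_{(2)}))S^{-1}(a_{(1)})=\sum a_{(2)}S^{-1}(a_{(1)})=\v'_s(a)$, so the two right-hand sides are equal. In particular the value depends only on $z=\v_t(a)\in\v_t(A)$, so I may unambiguously set $r\c z:=S^{-1}(z)\rhd r$ for $z\in\v_t(A)$, a visibly bilinear and well-defined map. The right-module axiom then follows formally: for $z,w\in\v_t(A)$, associativity of the $M(A)$-action and anti-multiplicativity of $S^{-1}$ give $(r\c z)\c w=S^{-1}(w)\rhd(S^{-1}(z)\rhd r)=\big(S^{-1}(w)S^{-1}(z)\big)\rhd r=S^{-1}(zw)\rhd r$, and since $\v_t(A)$ is closed under multiplication by formula (2.4) we have $zw\in\v_t(A)$, so the last term is exactly $r\c(zw)$. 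Hence $R$ is a right $\v_t(A)$-module.

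The substantive point is unitality, i.e. $R\c\v_t(A)=\v'_s(A)\rhd R=R$; only the inclusion $R\subseteq\v'_s(A)\rhd R$ needs proof, the reverse being clear. The tool is the companion of the antipode identity (2.3): applying (2.3) inside the co-opposite regular weak multiplier Hopf algebra $(A,\D^{cop})$, whose antipode is $S^{-1}$, and regrouping yields $\sum\v'_s(a_{(2)})a_{(1)}=a$ (the covering of the bare $a_{(1)}$ on the right being available precisely in the regular case, as in the discussion of coverings in Section \ref{two}). Granting this, the $M(A)$-module property gives, for all $a\in A$ and $r\in R$, the equality $a\rhd r=\big(\sum\v'_s(a_{(2)})a_{(1)}\big)\rhd r=\sum\v'_s(a_{(2)})\rhd(a_{(1)}\rhd r)\in\v'_s(A)\rhd R$. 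Since $R=A\rhd R$ by unitality of the $A$-action, it follows that $R\subseteq\v'_s(A)\rhd R\subseteq R$, whence $R\c\v_t(A)=\v'_s(A)\rhd R=R$.

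I expect the identity $\sum\v'_s(a_{(2)})a_{(1)}=a$ to be the main obstacle, and the point where care is required. One must use exactly this ordering, with $\v'_s$ acting on the second leg and the first leg standing to its right: the superficially plausible variant $\sum\v'_s(a_{(1)})a_{(2)}=a$ is false — for $A$ the dual of a finite groupoid algebra it already fails on non-unit elements, where the left-hand side vanishes — so the asymmetry is essential, and it is here that regularity enters, through the bijectivity of $S$ and the availability of the co-opposite antipode identity with a legitimate covering. Once this identity is secured, the agreement of the formulas, the module axiom, and unitality all follow mechanically.
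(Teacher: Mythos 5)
Your proposal is correct and follows essentially the same route as the paper: the right-module structure is formal, and unitality reduces to the chain $R=A\rhd R=\v'_s(A)\rhd(A\rhd R)=\v'_s(A)\rhd R$. The only difference is that the paper simply cites Proposition 2.10 in \cite{VDW3} for the key fact that $A$ is a unital left $\v'_s(A)$-module, whereas you re-derive that ingredient from the co-opposite form $\sum\v'_s(a_{(2)})a_{(1)}=a$ of the antipode identity (2.3) — a correct, self-contained substitute, with the covering remarks you make being exactly the ones needed in the regular case.
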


\begin{proof}
 It is easy to know that $R$ is a right $\v _t(A)$-module. From Proposition 2.10 in \cite{VDW3}, $A$ can be regarded as a unital left $\v'_s(A)$-module.  Because $R$ is unital, we have $R=AR=\v'_s(A)AR=\v'_s(A)R.$
\end{proof}

Also by Proposition 2.10 in \cite{VDW3}, we know that $A$ is a unital left $\v_t(A)$-module. So we can define the tensor product $R\o_{\v_t(A)}A$. For  concisely we will denote it by $R\o_t A$.

\begin{proposition}
Let $R$ be a left $A$-module algebra. Then the  space $R\o_t A$  is a non-degenerate algebra with the product
$$(r\o_t a)(s\o_t b)=\sum r(a_{(1)}\rhd s)\o_t a_{(2)}b$$
for all $r,s\in R$ and $a,b\in A$.
\end{proposition}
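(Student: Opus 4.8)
The plan is to treat the three assertions of the proposition separately: that the formula gives a well-defined product on the balanced tensor product, that it is associative, and that the resulting algebra is non-degenerate. The last and most substantial point I would obtain by identifying $R\o_t A$ with the smash product $R\#_E A$ of Proposition \ref{4.3}.

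First I would check that the bilinear rule $(r\o a, s\o b)\mapsto \sum r(a_{(1)}\rhd s)\o_t a_{(2)}b$ is $\v_t(A)$-balanced in each slot, so that it descends to a map $(R\o_t A)\times(R\o_t A)\lr R\o_t A$. For balancing in the first slot I would replace $r$ by $r\c\v_t(c)=\v'_s(c)\rhd r$ and $a$ by $\v_t(c)a$; since $\D(\v_t(c)a)=(\v_t(c)\o 1)\D(a)$ by (2.5) and (2.2), the two outcomes coincide exactly when $(\v'_s(c)\rhd r)(a_{(1)}\rhd s)=r(\v_t(c)\rhd(a_{(1)}\rhd s))$, and because $\v'_s(c)=S^{-1}(\v_t(c))\in\v_s(A)$ with $S(\v'_s(c))=\v_t(c)$ this is precisely (3.4) of Proposition \ref{3.3}. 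For the second slot the balancing reduces to the multiplier identity $\D(a)(\v'_s(c)\o 1)=\D(a)(1\o\v_t(c))$, which I would get in one line from $\D(a)=\D(a)E$, the relation (2.9) in the form $E(\v'_s(c)\o 1)=E(1\o S(\v'_s(c)))=E(1\o\v_t(c))$, and $\D(a)E=\D(a)$ once more. This second balancing is the first delicate point, since a naive rewriting does not close and one must insert the canonical idempotent through (2.2) and (2.9). Associativity then needs no new idea: the defining formula is literally the product (4.1) of $R\#A$, so the computation of Proposition 4.1 applies verbatim once independence of representatives is known (alternatively it is inherited through the isomorphism below).

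For non-degeneracy I would show that $\Phi:R\o_t A\lr R\#_E A$, $r\o_t a\mapsto E\rhd(r\# a)$, is an algebra isomorphism and then invoke Proposition \ref{4.3}. Well-definedness of $\Phi$ is again (2.9); multiplicativity follows from Proposition 4.2, and surjectivity is immediate from $R\#_E A=E\rhd(R\#A)$. The main obstacle is injectivity, which I expect to follow from the identity $\sum E_{(1)}\rhd r\o_t E_{(2)}a=r\o_t a$ in $R\o_t A$: using (2.8) to write $\sum E_{(1)}\o E_{(2)}a=\sum\v'_s(a_{(1)})\o a_{(2)}$, then $\v'_s(a_{(1)})\rhd r=r\c\v_t(a_{(1)})$ together with the balancing relation to carry $\v_t(a_{(1)})$ across the tensor, the left-hand side becomes $\sum r\o_t\v_t(a_{(1)})a_{(2)}$, which equals $r\o_t a$ by (2.3). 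Granting this, if $\Phi(\sum_i r_i\o_t a_i)=0$ then $\sum_i E_{(1)}\rhd r_i\# E_{(2)}a_i=0$ in $R\o A$; pushing this to the quotient and applying the identity term by term gives $\sum_i r_i\o_t a_i=\sum_i E_{(1)}\rhd r_i\o_t E_{(2)}a_i=0$. Hence $\Phi$ is bijective, $R\o_t A\cong R\#_E A$ as algebras, and the non-degeneracy asserted in the proposition is inherited from Proposition \ref{4.3}.
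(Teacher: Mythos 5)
Your proof is correct, but it reorganizes the paper's material rather than following its argument. The paper's own proof checks only the balancing in the \emph{left} factor, namely $(rx\o_t a)(s\o_t b)=(r\o_t xa)(s\o_t b)$ for $x\in\v_t(A)$ via (3.4) --- exactly your first slot --- declares associativity straightforward, and then obtains non-degeneracy \emph{directly}, ``in a similar way as in Proposition \ref{4.3}'', i.e.\ by redoing that cancellation argument in the $\o_t$ setting; the isomorphism $R\o_t A\cong R\#_EA$ is kept as a separate, subsequent proposition, whose bijectivity the paper dismisses as obvious. You do two things differently. First, you also verify the balancing in the \emph{right} slot, reducing it to $\D(a)(\v'_s(c)\o 1)=\D(a)(1\o \v_t(c))$ via (2.2) and (2.9); the paper passes over this point in silence, so your treatment is more complete here. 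Second, you derive non-degeneracy by constructing the isomorphism $\Phi: r\o_t a\mapsto E\rhd(r\#a)$ first and transporting Proposition \ref{4.3} across it. This is not circular: well-definedness and associativity, which are all that is needed to speak of the algebra $R\o_t A$ and of multiplicativity of $\Phi$ (via Proposition 4.2), are in place before the isomorphism is invoked. What your route buys is that it avoids repeating the lengthy cancellation computation of Proposition \ref{4.3}, and it supplies a genuine proof of the injectivity the paper leaves unargued, through the identity $\sum E_{(1)}\rhd r\o_t E_{(2)}a=r\o_t a$ (using (2.8), the relation $r\cdot\v_t(a)=\v'_s(a)\rhd r$, the balancing, and (2.3)), which exhibits the quotient map $R\o A\lr R\o_t A$ as a left inverse of $\Phi$. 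What the paper's route buys is that the present proposition remains self-contained, so the subsequent isomorphism can be stated as an independent result rather than being absorbed into this proof.
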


\begin{proof}
Remark that $a_{(2)}$ is covered by $b$ or $s$ can be used to cover $a_{(1)}$. First we show that the multiplication above is well defined. It means we have to show
$$(rx\o_t a)(s\o_t b)=(r\o_t xa)(s\o b)$$
for all $r\in R, a\in A, x\in \v_t(A).$ We have
\begin{eqnarray*}
(rx\o_t a)(s\o_t b) &=& \sum (rx)(a_{(1)}\rhd s)\o_t a_{(2)}b\\
                    &=& \sum (S^{-1}(x)r)(a_{(1)}\rhd s)\o_t a_{(2)}b\\
                    &=& \sum r(xa_{(1)}\rhd s)\o_t a_{(2)}b\\
                    &=& (r\o_t xa)(s\o_t b).
\end{eqnarray*}
The proof of the  associativity of the product is straightforward. In a similar way as in Proposition \ref{4.3} we can get the non-degeneracy of the product.
\end{proof}

Now we show that there is an isomorphism between $R\o_t A$ and $R\#_EA$.

\begin{proposition} We have that
$R\o_t A$ is isomorphic with $R\#_EA$.
\end{proposition}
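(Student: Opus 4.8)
The plan is to construct an explicit linear map between the two algebras and show it is a bijective algebra homomorphism. The natural candidate comes from the definitions: an element of $R\#_EA$ has the form $E\rhd(r\#a)=E_{(1)}\rhd r\# E_{(2)}a$, while $R\o_t A$ is the balanced tensor product over $\v_t(A)$. I would define $\Phi: R\o_t A\lr R\#_EA$ by $\Phi(r\o_t a)=E\rhd(r\#a)=r\#_Ea$, and in the other direction $\Psi: R\#_EA\lr R\o_t A$ by $\Psi(E\rhd(r\#a))=\sum (E_{(1)}\rhd r)\o_t E_{(2)}a$, using the Sweedler notation $E=E_{(1)}\o E_{(2)}$ for the canonical idempotent.

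First I would check that $\Phi$ is well defined on the balanced tensor product, i.e.\ that it respects the relation $rx\o_t a=r\o_t xa$ for $x\in\v_t(A)$. This amounts to showing $E\rhd((rx)\#a)=E\rhd(r\#(xa))$, which should follow from the formula $r\cdot\v_t(a)=S^{-1}(\v_t(a))\rhd r$ of the preceding lemma, together with the relation $(y\rhd r)r'=r(S(y)\rhd r')$ from Proposition \ref{3.3}(ii) and the covering identity (2.5) for $\D(x)=(x\o 1)E=E(x\o 1)$ when $x\in A_t$. Once $\Phi$ is well defined, I would verify that it is an algebra homomorphism: both products are given by the same formula $\sum r(a_{(1)}\rhd s)\o a_{(2)}b$, so the key point is that the idempotent $E$ interacts correctly, and this is precisely what the second identity of the previous proposition, $E\rhd((r\#a)(s\#b))=(E\rhd(r\#a))(s\#b)$, delivers. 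Multiplicativity of $\Phi$ should therefore reduce to that displayed identity.

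Next I would check that $\Phi$ and $\Psi$ are mutually inverse. The composite $\Phi\circ\Psi$ sends $E\rhd(r\#a)$ to $E\rhd(E_{(1)}\rhd r\# E_{(2)}a)$, which collapses to $E\rhd(r\#a)$ using the idempotency of $E$ together with (2.1). The composite $\Psi\circ\Phi$ is the more delicate direction: one must see that $\sum(E_{(1)}\rhd r)\o_t E_{(2)}a$ equals $r\o_t a$ in the balanced tensor product, and here the balancing over $\v_t(A)$ is essential — the extra legs of $E$ can be absorbed into the tensor relation precisely because $E(a\o 1)=\sum a_{(1)}\o\v_t(a_{(2)})$ by (2.6), so that the target-algebra component of $E$ slides across the $\o_t$. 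I expect this to be the main obstacle: making rigorous, with correct coverings, that $\Psi$ is well defined and lands in the balanced tensor product, and that the $\v_t(A)$-relation exactly cancels the redundancy introduced by $E$. Both maps are manifestly linear, so once well-definedness, multiplicativity, and the two inverse identities are established, the isomorphism $R\o_t A\cong R\#_EA$ follows.
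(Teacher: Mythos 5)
Your map $\Phi$ is exactly the paper's map $f: r\o_t a\mapsto E\rhd(r\# a)$, and your argument follows the same route: well-definedness from the compatibility of $E$ with the source and target algebras (the precise identity is (2.9), $E(y\o 1)=E(1\o S(y))$ for $y\in\v_s(A)$, rather than (2.5); likewise the sliding identity needed for $\Psi\circ\Phi=\i$ is (2.8), $E(1\o a)=\sum \v'_s(a_{(1)})\o a_{(2)}$, not (2.6)), followed by multiplicativity and bijectivity, which the paper dismisses as ``easy''/``obvious'' and which you flesh out via the explicit inverse $\Psi$ (whose well-definedness, incidentally, is immediate once one notes it is just the restriction to $R\#_EA$ of the quotient map $R\o A\lr R\o_t A$). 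So this is essentially the paper's own proof, carried out in more detail.
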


\begin{proof}
For any $r\in R, a\in A, x\in \v_t(A)$, define the map
$$f: R\o_t A\lr R\#_EA: r\o xa \mapsto E_{(1)}r\o E_{(2)}xa.$$
First we have to show that  $f$ is well-defined. It means that we must get
$$E_{(1)}r\o E_{(2)}xa=E_{(1)}S^{-1}(x)r\o E_{(2)}a.$$
This is true since $E(y\o 1)=E(1\o S(y))$ for any $y\in \v_s(A)$. Obviously $f$ is bijective. Finally it is easy to show that $f$ is an isomorphism.
\end{proof}

In this paper  we will mainly consider the algebra $R\#_EA$. Let us study more properties  about the algebra.

The product in $R\#_EA$ is defined by the twist map
$$T: A\o R\lr R\o A: a\o r\mapsto \sum a_{(1)}\rhd  r\o a_{(2)}$$
where $a\in A, r\in R$. We also know that $T$ is bijective and its inverse is given by
$$T^{-1}: R\o A\lr A\o R: x\o a\mapsto \sum a_{(2)}\o S_A^{-1}(a_{(1)})\rhd  r.$$
So the product is given by $(m_R\o m_A)(id\o T\o id)$, here $m_R$ and $m_A$ denote the multiplication in $R$ and $A$,
 respectively. Remark that for the last expression, $a_{(1)}$ is covered by $r$.

If $R$ has an identity $1_R$, then we have
\begin{eqnarray*}
(E\rhd(1_R\# a))(E\rhd(1_R\# b)) &=& E\rhd ((1_R\# a)(1_R\# b))\\
               &=& \sum \v_t(a_{(1)})\rhd 1_R\#_Ea_{(2)}b\\
               &=& \sum 1_R\#_E \v_t(a_{(1)})a_{(2)}b\\
               &=& 1_R\#_E ab.
\end{eqnarray*}
It means that $a\mapsto 1\#_Ea$ is a homomorphism of $A$ into $R\#_EA$.

If $A$ has an identity, then $A$ is a weak Hopf algebra with  $E=\D(1)=1_{1}\o1_2$(see Proposition 4.12 in \cite{VDW2}). And we have
$$(r\#1)(s\#1)=r(1_1\rhd  s)\#1_2=rs(S(1_1)1_2\rhd  1)\# 1=rs\# 1.$$
This equality also gives a homomorphism of $R$ into $R\#A$ by $r\mapsto r\#1$.

If $A$ is a regular multiplier Hopf algebra, then it is the case which has been studied in \cite{DVZ}. If $R$ and $A$ have identities, then it is  case which has been studied in \cite{N,ZW}. And $1\o_t 1$ is the identity in $R\# A$.

Now let us consider the following useful homomorphisms which are appeared in Proposition 5.7 in \cite{DVZ}. We will generalize it to weak multiplier Hopf algebras case.

\begin{proposition}
Let $A$ and $R$ be as before. For all $r,r'\in R, a,a'\in A$
\begin{romanlist}[(ii)]
\item Define  $\pi_A: A\lr M(R\#_EA)$  by
$$\pi_A(a)(r\#_Ea')=\sum a_{(1)}r\#_Ea_{(2)}a'$$
$$(r\#_Ea')\pi_A(a)=r\#_Eaa'.$$
Then $\pi_A$ is a unital algebra homomorphism.

\item Define $\pi_R: R\lr M(R\#_EA)$ by
$$\pi_R(r)(r'\#_Ea)=rr'\#_E a$$
$$(r'\#_Ea)\pi_R(r)=\sum r'(a_{(1)}\rhd r)\#_Ea'_{(2)}.$$
Then $\pi_R$ is an algebra homomorphism.
\end{romanlist}
\end{proposition}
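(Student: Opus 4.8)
The plan is to treat $\pi_A$ and $\pi_R$ separately and, for each, to verify in turn that the prescribed pair of left/right formulas genuinely defines a multiplier of $R\#_EA$, that the assignment is multiplicative, and (for $\pi_A$) that it is unital. Throughout I will use the product in the smash algebra, which by Proposition 4.2 takes the form
$$(r\#_Ea)(s\#_Eb)=\sum r(a_{(1)}\rhd s)\#_Ea_{(2)}b,$$
together with the multiplicativity of $\D$, the associativity of the action, and the module-algebra axiom (3.1). The coverings are the ones inherited from (4.1): $a_{(1)}$ is covered by the $R$-factor on its right and $a_{(2)}$ by the $A$-factor.

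For $\pi_A$, write $L(w)=\pi_A(a)w$ and $\bar{R}(w)=w\pi_A(a)$. That $L$ is a left multiplier and $\bar{R}$ a right multiplier are short computations from the product formula, using $\D(cb)=\D(c)\D(b)$ and associativity of $\rhd$ and of $A$. The essential point is the compatibility $\bar{R}(w)w'=wL(w')$: with $w=r\#_Ea'$ and $w'=s\#_Eb$, the left-hand side is $(r\#_Ea'a)(s\#_Eb)=\sum r(a'_{(1)}a_{(1)}\rhd s)\#_Ea'_{(2)}a_{(2)}b$ after expanding $\D(a'a)=\D(a')\D(a)$, while the right-hand side is $(r\#_Ea')\big(\sum a_{(1)}\rhd s\#_Ea_{(2)}b\big)=\sum r(a'_{(1)}\rhd(a_{(1)}\rhd s))\#_Ea'_{(2)}a_{(2)}b$; these coincide by associativity of the action. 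Multiplicativity $\pi_A(ab)=\pi_A(a)\pi_A(b)$ is then immediate: on the left one uses $a_{(1)}\rhd(b_{(1)}\rhd r)=(a_{(1)}b_{(1)})\rhd r$ with $\D(ab)=\D(a)\D(b)$, and on the right $((r\#_Ea')\pi_A(a))\pi_A(b)=r\#_Ea'ab$.

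For the unitality of $\pi_A$ I would prove it is non-degenerate and that its canonical extension to $M(A)$ sends $1$ to $\i$. On the right this is immediate, since $(r\#_Ea')\pi_A(a)=r\#_Ea'a$ and $A^2=A$ give $(R\#_EA)\pi_A(A)=R\#_EA$; the value $\pi_A(1)=\i$ then follows from $\D(1)=E$, because $\pi_A(1)(r\#_Ea')=\sum E_{(1)}\rhd r\#_EE_{(2)}a'=E\rhd(r\#a')=r\#_Ea'$. The map $\pi_R$ is treated the same way with the roles reversed: since $\pi_R(r)(r'\#_Ea)=rr'\#_Ea$ touches only the $R$-factor, the left-multiplier property and the multiplicativity $\pi_R(r)\pi_R(r')=\pi_R(rr')$ are immediate, while the right-multiplier property and the compatibility $\bar{R}(w)w'=wL(w')$ reduce, through the product formula and the threefold coproduct, to the module-algebra identity $a_{(1)}\rhd(rr')=\sum(a_{(1)}\rhd r)(a_{(2)}\rhd r')$. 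The step I expect to be most delicate is the left non-degeneracy $\pi_A(A)(R\#_EA)=R\#_EA$ required for unitality: writing an arbitrary element as $s\#_Ec$ with $s\in AR$ and using the range identity $E(A\o A)=\D(A)(1\o A)$, the element $E\rhd((d\rhd s_0)\#c)$ becomes a finite sum $\sum_k a_{k(1)}\rhd s_0\#a_{k(2)}b_k$; applying the idempotent operator $E\rhd(\cdot)$ and recognising the result as $\sum_k\pi_A(a_k)(s_0\#_Eb_k)$ exhibits $s\#_Ec$ in the image. Keeping all the coverings honest through this last argument is where the real care is needed.
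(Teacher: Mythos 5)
Your proof is correct and follows essentially the same route as the paper's: verify the multiplier compatibility relation $(w\pi_A(a))w'=w(\pi_A(a)w')$ by expanding the smash product and using associativity of the action, check multiplicativity on both the left and right formulas, deduce unitality from $A^2=A$ together with the range identity $\D(A)(1\o A)=E(A\o A)$ (plus $\D(a)E=\D(a)$), and reduce the statement for $\pi_R$ to the module-algebra identity (3.1). The only difference is one of detail: the paper disposes of the unitality step and of part (ii) in a single line each, whereas you spell out the covering and range-identity argument explicitly, which is a faithful expansion of what the paper leaves implicit.
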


\begin{proof}
(1) First we show that $\pi_A(a)\in M(R\#_EA)$. For all $r,r'\in R$ and $a,a',a''\in A$ we have
\begin{eqnarray*}
(r'\#_E a'')(\pi_A(a)(r\#_E a'))&=& (r'\#_E a'')(\sum a_{(1)}\rhd  r\#_Ea_{(2)}a')\\
                               &=& \sum r'((a''_{(1)}a_{(1)})\rhd r)\#_E a''_{(2)}a_{(2)}a'\\
                               &=& ((r'\#_E a'')\pi_A(a))(r\#_E a').
\end{eqnarray*}
So $\pi_A$ is well defined. Next we will show that  $\pi_A$ is a homomorphism. For any $a,a'a''\in A, r,r'\in R$, we have
\begin{eqnarray*}
\pi_A(aa')(r''\#_Ea'') &=& \sum (a_{(1)}a'_{(1)})\rhd r\#_Ea_{(2)}a'_{(2)}a''\\
                     &=& \sum (\pi_A(a)\pi_A(a'))(r''\#_Ea'')
\end{eqnarray*}
and
\begin{eqnarray*}
(r''\#_Ea'')\pi_A(aa')=r\#_E a''aa'=(r''\#_Ea'')(\pi_A(a)\pi_A(a')).
\end{eqnarray*}
Since $A$ is idempotent and $\D(A)(1\o A)=E(A\o A)$, we can get $\pi_A(A)(R\#_EA)=R\#_EA$. Then $\pi_A$ is a unital algebra homomorphism.

(2) The proof is similar.
\end{proof}

\begin{remark} With the above notation.
 $\pi_R$ is not unital any more. But if $R^2=R$, then $\pi_R$ is unital. For any $r,r'\in R$ and $a\in A$, we have
$$(r'\#_Ea)\pi_R(r)=\sum r'(a_{(1)}\rhd r)\#_Ea'_{(2)}.$$
Because $\D(A)(1\o A)=E(A\o A)$ and $R$ is unital, so the right-hand side is the element of the form
$$ r'(E_{(1)}\rhd r)\#_EE_{(2)}a.$$
It is also equal to
$$E_{(1)}(r'(\v_s(E_{(2)}))r)\# E_{(3)}a.$$
If $R^2=R$, then the element of  this form is actually equal to $r\#_Ea$.
\end{remark}

\begin{proposition}\label{4.8}
 For all $r\in R, a\in A$, we have
$$\pi_A(a)\pi_R(r)=\sum a_{(1)}\rhd r\#_E a_{(2)}$$
and
$$\pi_R(r)\pi_A(a)=r\#_Ea.$$
\end{proposition}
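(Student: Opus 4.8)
The plan is to read both identities as equalities of multipliers in $M(R\#_EA)$, where the right-hand sides $\sum a_{(1)}\rhd r\#_Ea_{(2)}$ and $r\#_Ea$ are genuine elements of $R\#_EA$ acting on themselves by the algebra product. First I would check that these right-hand sides are well-defined: writing $r=e\rhd r$ for a local unit $e\in A$ and using regularity of $\D$, the expression $\sum a_{(1)}\rhd r\o a_{(2)}=\sum(a_{(1)}e)\rhd r\o a_{(2)}$ lies in $R\o A$ because $\D(a)(e\o 1)\in A\o A$, so applying $E\rhd(-)$ produces a bona fide element of $R\#_EA$. Since the product in $R\#_EA$ is non-degenerate (Proposition \ref{4.3}), a multiplier is determined by its action on one side, so it suffices to compute the left action of each side on an arbitrary element $s\#_Eb$.

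For the first identity I would compute, using the definitions of $\pi_A$ and $\pi_R$ above,
\[(\pi_A(a)\pi_R(r))(s\#_Eb)=\pi_A(a)\big(\pi_R(r)(s\#_Eb)\big)=\pi_A(a)(rs\#_Eb)=\sum a_{(1)}\rhd(rs)\#_Ea_{(2)}b,\]
and then expand $a_{(1)}\rhd(rs)$ by the module-algebra axiom (3.1) and regroup by coassociativity to reach $\sum(a_{(1)}\rhd r)(a_{(2)}\rhd s)\#_Ea_{(3)}b$. On the other side, multiplying the element $\sum a_{(1)}\rhd r\#_Ea_{(2)}$ by $s\#_Eb$ via the smash-product rule (4.1) and again invoking coassociativity yields precisely the same expression, so the two multipliers coincide.

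For the second identity the computation is shorter: directly from the definitions and (4.1),
\[(\pi_R(r)\pi_A(a))(s\#_Eb)=\pi_R(r)\big(\sum a_{(1)}\rhd s\#_Ea_{(2)}b\big)=\sum r(a_{(1)}\rhd s)\#_Ea_{(2)}b,\]
and this is exactly $(r\#_Ea)(s\#_Eb)$ by (4.1), so no module-algebra identity is needed here. I would then close both equalities by non-degeneracy. I expect the only genuine obstacle to be the covering bookkeeping: confirming that each Sweedler leg is properly covered — the first leg by $r$ or $s$ through unitality of the module, and the dangling leg $a_{(2)}$ through the idempotent $E$ concealed in $\#_E$ — and checking that dropping or reinserting $E$ between the two factors (justified by the proposition preceding the Notation) leaves the elements unchanged. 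Once the covering is correctly tracked, both statements collapse to the associativity manipulations displayed above.
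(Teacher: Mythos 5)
Your proposal is correct: the paper itself only says the proof is ``straightforward,'' and your direct computation of the one-sided actions on $s\#_Eb$, closed by non-degeneracy of the product in $R\#_EA$ (Proposition \ref{4.3}), is precisely the straightforward verification intended. The covering remarks and the appeal to the proposition preceding the Notation (for inserting or dropping $E$ in products) are exactly the right bookkeeping, so nothing is missing.
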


\begin{proof}
The proof is straightforward.
\end{proof}

It follows that $R\#_EA=\pi_R(R)\pi_A(A)$=$\pi_A(A)\pi_R(R)$, with this proposition we can extend $\pi_R$ from $R$ to $M(R)$ . We also have the following universal property.

\begin{proposition}
Let $A$ be a regular weak multiplier Hopf algebra, assume that $R$ is an $A$-module algebra. Let $B$ be an algebra over $\mathbb{C}$. If there exists homomorphisms $\pi_A: A\lr M(B)$ and $\pi_R: R\lr M(B)$ such that
$$\pi_A(a)\pi_R(r)=\sum \pi_R(a_{(1)}\rhd r)\pi_A(a_{(2)})$$
for all $a\in A, r\in R$. Then there is a homomorphism $\pi: R\#_E A\lr M(C)$ such that
$$\pi(r\#_E a)=\pi_R(r)\pi_A(a).$$
\end{proposition}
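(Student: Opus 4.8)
The target of $\pi$ should of course read $M(B)$. The plan is to construct the map first on the full smash product $R\# A$ and then to cut it down to the subalgebra $R\#_EA=E\rhd(R\# A)$. Define a linear map $\ti\pi:R\# A\lr M(B)$ on simple tensors by $\ti\pi(r\# a)=\pi_R(r)\pi_A(a)$; this is well defined by the universal property of the tensor product, since $(r,a)\mapsto\pi_R(r)\pi_A(a)$ is bilinear into $M(B)$.

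The first step is to verify that $\ti\pi$ is an algebra homomorphism for the product (4.1). Using that $\pi_R$ and $\pi_A$ are homomorphisms together with the hypothesis $\pi_A(a)\pi_R(s)=\sum\pi_R(a_{(1)}\rhd s)\pi_A(a_{(2)})$, one computes $\ti\pi((r\# a)(s\# b))=\sum\pi_R(r(a_{(1)}\rhd s))\pi_A(a_{(2)}b)=\pi_R(r)\big(\sum\pi_R(a_{(1)}\rhd s)\pi_A(a_{(2)})\big)\pi_A(b)=\pi_R(r)\pi_A(a)\pi_R(s)\pi_A(b)=\ti\pi(r\# a)\ti\pi(s\# b)$. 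The covering here is exactly the one used in (4.1): the twist map $T$ puts $\sum a_{(1)}\rhd s\o a_{(2)}$ into $R\o A$, so every expression is a genuine finite sum and the hypothesis may be applied legitimately. Since $R\#_EA=E\rhd(R\# A)$ is a subalgebra of $R\# A$, the restriction $\pi:=\ti\pi|_{R\#_EA}$ is then automatically an algebra homomorphism into $M(B)$.

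What remains, and what I expect to be the main obstacle, is the normalization $\pi(r\#_Ea)=\pi_R(r)\pi_A(a)$. By construction $\pi(r\#_Ea)=\ti\pi(E\rhd(r\# a))=\sum\pi_R(E_{(1)}\rhd r)\pi_A(E_{(2)}a)$, which by (2.8) equals $\sum\pi_R(\v'_s(a_{(1)})\rhd r)\pi_A(a_{(2)})$; one must show that inserting the canonical idempotent $E$ does not alter the value, i.e. that $\sum\pi_R(E_{(1)}\rhd r)\pi_A(E_{(2)}a)=\pi_R(r)\pi_A(a)$. I would deduce this by applying the homomorphism $\ti\pi$ to the identity $(E\rhd(r\# a))(s\# b)=(r\# a)(s\# b)$ established earlier in this section, which yields $\pi(r\#_Ea)\,\pi_R(s)\pi_A(b)=\pi_R(r)\pi_A(a)\,\pi_R(s)\pi_A(b)$ for all $s\in R$ and $b\in A$, and then cancelling the common right factor $\pi_R(s)\pi_A(b)$.

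The delicate point is precisely this cancellation. It is the external analogue of the transparent relation $E\rhd(E\rhd\xi)=E\rhd\xi$ coming from $E$ being idempotent in $M(A\o A)$, which is what makes the corresponding identity immediate for the canonical maps $\pi_A^{int},\pi_R^{int}$ of Proposition \ref{4.8}; for the given representations into $M(B)$ one instead has to exploit the surjectivity of the action (so that the $s$ exhaust $R$ through $A\rhd R=R$), the idempotency of $A$, and a non-degeneracy hypothesis on the pair $(\pi_R,\pi_A)$ in $M(B)$ in order to remove the factor $\pi_R(s)\pi_A(b)$. Once this normalization is secured, the defining formula $\pi(r\#_Ea)=\pi_R(r)\pi_A(a)$ holds, and $\pi$ is the desired homomorphism, its image being $\pi_R(R)\pi_A(A)$ in agreement with $R\#_EA=\pi_R(R)\pi_A(A)$.
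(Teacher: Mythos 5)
Your positive steps are all correct, and they amount to considerably more than the paper provides: the paper's entire proof of this proposition is the single word ``Straightforward.'' Defining $\ti\pi$ on the full smash product $R\# A$ (where well-definedness is mere bilinearity of $(r,a)\mapsto \pi_R(r)\pi_A(a)$), verifying multiplicativity from the commutation hypothesis exactly as you do, and then restricting to the subalgebra $R\#_EA=E\rhd(R\# A)$ is a sound way to produce a homomorphism on $R\#_EA$.

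The point you flag as delicate is in fact fatal to the statement as printed. Without some non-degeneracy assumption on the pair $(\pi_R,\pi_A)$, the proposition is false, because the prescription $\pi(r\#_Ea)=\pi_R(r)\pi_A(a)$ need not even be well defined: one can have $E\rhd(r\o a)=0$ while $\pi_R(r)\pi_A(a)\neq 0$. Concretely, let $A=\mathbb{C}u_1\oplus\mathbb{C}u_2$ be the groupoid algebra of a groupoid with two units and no other arrows, so $\D(u_i)=u_i\o u_i$, $S=\i$, $E=u_1\o u_1+u_2\o u_2$; let $R=\v_t(A)=A$ carry the trivial action $u_i\rhd u_j=\delta_{ij}u_j$ of Section \ref{three}; let $B=M_2(\mathbb{C})$ with matrix units $e_{ij}$, and set $\pi_A(u_1)=0$, $\pi_A(u_2)=e_{11}$, $\pi_R(u_1)=e_{22}+e_{21}$, $\pi_R(u_2)=0$. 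Both maps are algebra homomorphisms, and the commutation hypothesis $\pi_A(u_i)\pi_R(u_j)=\delta_{ij}\pi_R(u_j)\pi_A(u_i)$ holds because every product occurring in it vanishes (in particular $e_{11}(e_{22}+e_{21})=0$). Nevertheless $u_1\#_Eu_2=E\rhd(u_1\o u_2)=(u_1\rhd u_1)\o u_1u_2+(u_2\rhd u_1)\o u_2u_2=0$, while $\pi_R(u_1)\pi_A(u_2)=(e_{22}+e_{21})e_{11}=e_{21}\neq 0$; so no map whatsoever, homomorphism or not, can satisfy $\pi(r\#_Ea)=\pi_R(r)\pi_A(a)$. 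The underlying reason is the one you identified: the hypothesis constrains products $\pi_A(\cdot)\pi_R(\cdot)$ in one order only, and the identity $\sum\pi_R(E_{(1)}\rhd r)\pi_A(E_{(2)}a)=\pi_R(r)\pi_A(a)$, which is vacuous for multiplier Hopf algebras where $E=1\o 1$, is a genuine extra condition in the weak case; the commutation relation only transforms it into $\sum\pi_A(a_{(2)})\pi_R(S^{-1}(a_{(1)})\rhd r)=\pi_R(r)\pi_A(a)$, which is circular.

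Your proposed repair is the right one. If one adds the non-degeneracy hypothesis $\pi_R(R)\pi_A(A)B=B$ (it holds in the motivating case $B=R\#_EA$ of Proposition \ref{4.8}, at least when $R^2=R$, and it is the analogue of the non-degeneracy assumed in the multiplier Hopf algebra version in \cite{DVZ}), then your cancellation goes through: applying $\ti\pi$ to the identity $(E\rhd(r\# a))(s\# b)=(r\# a)(s\# b)$ of Section \ref{four} and multiplying on the right by $c\in B$ gives $\ti\pi(E\rhd(r\# a))\pi_R(s)\pi_A(b)c=\pi_R(r)\pi_A(a)\pi_R(s)\pi_A(b)c$ for all $s\in R$, $b\in A$, $c\in B$; the elements $\pi_R(s)\pi_A(b)c$ span $B$, so the two multipliers agree as left multiplication operators on $B$, and hence agree in $M(B)$ because the product of $B$ is non-degenerate. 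So your argument, completed by this added hypothesis, proves a corrected statement; the gap lies in the paper's formulation and its one-word proof, not in your reasoning.
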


\begin{proof}
Straightforward.
\end{proof}

Now let us recall the definition of covariant module in \cite{DVZ}.

\begin{definition}
Let $A$ be a regular weak multiplier Hopf algebra and $R$ be a left $A$-module algebra. Let $V$ be a vector space which is both a left $A$-module and a left $R$-module. We say that $V$ is a covariant $A$-$R$-module if
$$
a(rv)=\sum (a_{(1)}r)(a_{(2)}v)
$$
for all $a\in A,r\in R, v\in V$.
\end{definition}

$V$ is called unital if it is unital both for $A$ and $R$. And $V$ is called non-degenerate if it is non-degenerate both for $A$ and $R$. Note that if $V$ is unital, then $V$ is automatically a non-degenerate $A$-module.

The following theorem gives  the relation between the covariant modules and the smash product modules (cf. \cite{WL}). The result is similar to the result for actions of locally compact groups on $C^\ast$-algebras(see Section 7.6 in \cite{P}). And in \cite{DVZ} the authors also obtain the result for multiplier Hopf algebras. Now we are going to show that it is also true for weak  multiplier Hopf algebras.

\begin{theorem}
Assume that $R^2=R$. Then there is a one-to-one correspondence between unital $R\#_EA$-modules and covariant $R$-$A$-modules with unital actions. And if one is non-degenerate, so is the other.
\end{theorem}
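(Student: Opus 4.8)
The plan is to build the two constructions explicitly, show they are mutually inverse using the homomorphisms $\pi_A$ and $\pi_R$ together with the relations of Proposition \ref{4.8}, and to isolate one $E$-covering identity as the technical heart.

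Starting from a covariant $A$-$R$-module $V$ with unital actions, I would first make $V$ into a module over the (non-unital) algebra $R\#A$ of formula (4.1) by the naive rule $(r\#a)\cdot v=r(av)$, where $r(\cdot)$ denotes the $R$-action and $a(\cdot)$ the $A$-action. This is manifestly well defined on $R\o A\o V$, and the covariance identity $a(rv)=\sum(a_{(1)}r)(a_{(2)}v)$ together with the module axioms shows it respects the product (4.1): both $((r\#a)(s\#b))\cdot v$ and $(r\#a)\cdot((s\#b)\cdot v)$ reduce to $\sum r(a_{(1)}s)((a_{(2)}b)v)$. Restricting this action to the subalgebra $R\#_EA=E\rhd(R\#A)$ then yields an $R\#_EA$-module, which is the object I want to associate to $V$.

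The technical heart is a covering identity analogous to (3.3): for $r\in R$ and $v\in V$ one has $\sum(E_{(1)}\rhd r)(E_{(2)}v)=rv$. I would prove this exactly as (3.3) is proved: apply an arbitrary $a\in A$ to the left-hand side, use covariance twice to obtain $\sum((a_{(1)}E_{(1)})\rhd r)((a_{(2)}E_{(2)})v)$, collapse this with $\D(a)E=\D(a)$ from (2.2) to $\sum(a_{(1)}r)(a_{(2)}v)=a(rv)$, and finish by non-degeneracy of the $A$-module $V$ (unital modules being non-degenerate). With this lemma the restricted action becomes transparent, since $(r\#_Ea)\cdot v=\sum(E_{(1)}\rhd r)((E_{(2)}a)v)=\sum(E_{(1)}\rhd r)(E_{(2)}(av))=r(av)$, and unitality is then immediate: $(R\#_EA)V=R(AV)=RV=V$. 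Throughout, the coverings ($E_{(1)}$ covered by $r$, and $E_{(2)}$ covered by the module element $v$) are legitimate precisely because $V$ is unital over $A$.

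Conversely, given a unital, hence non-degenerate, $R\#_EA$-module $V$, the multiplier homomorphisms $\pi_A:A\to M(R\#_EA)$ and $\pi_R:R\to M(R\#_EA)$ act on $V$, and I would set $a\cdot v=\pi_A(a)v$ and $r\cdot v=\pi_R(r)v$. These are module actions because $\pi_A,\pi_R$ are algebra homomorphisms, covariance is exactly the relation $\pi_A(a)\pi_R(r)=\sum\pi_R(a_{(1)}\rhd r)\pi_A(a_{(2)})$ recorded in Proposition \ref{4.8}, and unitality of both actions follows from $R\#_EA=\pi_A(A)\pi_R(R)=\pi_R(R)\pi_A(A)$. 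That the two assignments are mutually inverse is then a direct check: in one direction, writing $v=(s\#_Eb)v'$ and using the defining formula of $\pi_A$ together with covariance recovers the original $A$- and $R$-actions, while in the other the relation $\pi_R(r)\pi_A(a)=r\#_Ea$ of Proposition \ref{4.8} gives back $(r\#_Ea)v=r(av)$. I expect the $E$-covering identity $\sum(E_{(1)}\rhd r)(E_{(2)}v)=rv$ to be the main obstacle, as it is what reconciles the naive $R\#A$-action with the genuine $R\#_EA$-action and underlies both well-definedness and unitality. For the final clause I would note that non-degeneracy of the $R\#_EA$-action is equivalent to simultaneous non-degeneracy of the $A$- and $R$-actions: one implication is immediate from $R\#_EA=\pi_R(R)\pi_A(A)$, and the converse uses the covering identity once more.
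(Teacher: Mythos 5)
Your proposal is correct and takes essentially the same route as the paper: the passage from a unital $R\#_EA$-module to a covariant module via $\pi_A$, $\pi_R$ and Proposition \ref{4.8}, and the converse via the action $(r\#_Ea)v=r(av)$, with unitality and non-degeneracy argued in the same way. Your one addition, the covering identity $\sum(E_{(1)}\rhd r)(E_{(2)}v)=rv$ (proved exactly like formula (3.3)) combined with first letting all of $R\#A$ act and then restricting to the subalgebra $R\#_EA$, merely makes explicit the well-definedness of the formula $(r\#_Ea)v=r(av)$, a point the paper dismisses as ``easy to show.''
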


\begin{proof}
Suppose we have a left $R\#_EA$-module $V$ which is unital. Then we can extend $V$ to a $M(R\#_EA)$-module. With the homomorphisms $\pi_A$ and $\pi_R$, we have actions of $A$ and $R$ on $V$. With the formula in Proposition \ref{4.8} we have
\begin{eqnarray*}
a(rv)=a(\pi_R(r)v)=\pi_A(a)(\pi_R(r)v)=\sum \pi_R(a_{(1)}r)(\pi_A(a_{(2)})v)=\sum (a_{(1)}r)(a_{(2)}v)
\end{eqnarray*}
for any $a\in A, r\in R, v\in V$. Here $a_{(1)}$ is covered by $r$. So $V$ is a covariant $A$-$R$-module. Because
$$AV=\pi_A(A)(R\#_EA)V=(R\#EA)V=V$$
and
$$\quad  RV=\pi_R(R)(R\#_EA)V=(R\#EA)V=V,$$
we get $V$ is a unital covariant module.

Conversely, suppose that $V$ is a unital covariant $R$-$A$-module. For any $a\in A, r\in R, v\in V$, define the action of $R\#_EA$ on $V$ by
$$(r\#_Ea)v=r(av).$$
It is easy to show that $V$ is a left $R\#_EA$-module. And since $V$ is unital, we have
$$(R\#_EA)V=R(AV)=RV=V.$$
So $V$ is a unital left $R\#_EA$-module.

Now let us discuss the non-degeneracy of  these actions. Suppose that $V$ is a unital non-degenerate $R\#_EA$-module. Take any $v\in V$ and assume that $rv=0$ for all $r\in R$. Then $rav=0$ for all $r\in R, a\in A$. So we can get $v=0$. It means that $V$ is a non-degenerate $R$-module. Because $V$ is automatically a non-degenerate $A$-module, we get that $V$ is a unital non-degenerate covariant $R$-$A$-module.

Conversely, suppose that $V$ is a non-degenerate $R$-module. Take any $v\in V$ and assume that $(r\#_Ea)v=0$ for all $r\in R, a\in A$. Then $r(av)=0$ for all $r\in R, a\in A$. Because $V$ is a non-degenerate covariant  $R$-$A$-module, it follows that $v=0$. This means that $V$ is a non-degenerate $R\#_EA$-module.
\end{proof}

\section{The dual pairs}\label{five}
In this section we will consider the dual pair of regular weak multiplier Hopf algebras. For a dual pair $\<A,B\>$ of regular weak multiplier Hopf algebras, we can define the smash product $A\#_{E_B}B$ and $B\#_{E_A}A$. For the algebra $B\#_{E_A}A$ we have the following faithful action.

\begin{proposition}
$B$ is a left $(B\#_{E_A}A)$-module by
\begin{equation}
(b\#_Ea)\cdot b'=b(a\rhd b')=\sum bb'_{(1)}\<a, b'_{(2)}\>
\end{equation}
for any $a\in A,b,b'\in B$. And the action is faithful.
\end{proposition}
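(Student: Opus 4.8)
The plan is to realize $B$ first as a left module over the full smash product $B\# A$, and then to restrict to the subalgebra $B\#_{E_A}A=E_A\rhd(B\# A)$. First I would define $\mu\colon(B\# A)\o B\lr B$ by $\mu((b\# a)\o b')=b(a\rhd b')$, where $\rhd=\varphi^l_{A,B}$, and verify the left--module law $((b\# a)(c\# a'))\cdot b'=(b\# a)\cdot((c\# a')\cdot b')$ for $b,c,b'\in B$ and $a,a'\in A$. Expanding the left side with the product rule (4.1) yields $\sum b(a_{(1)}\rhd c)\bigl((a_{(2)}a')\rhd b'\bigr)$, while expanding the right side and applying the module--algebra axiom (3.1) for $B$ together with the associativity of the $A$-action gives $\sum b(a_{(1)}\rhd c)\bigl(a_{(2)}\rhd(a'\rhd b')\bigr)=\sum b(a_{(1)}\rhd c)\bigl((a_{(2)}a')\rhd b'\bigr)$; the two sides coincide. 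This is the routine part of the argument.

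Next I would restrict to $B\#_{E_A}A$. As this is a subalgebra of $B\# A$, the restricted action is automatically well defined, and it only remains to identify it with the stated formula. Writing $b\#_E a=E_A\rhd(b\# a)=\sum E_{(1)}\rhd b\# E_{(2)}a$ with $E_A=E_{(1)}\o E_{(2)}$, the restriction acts on $b'$ by $\sum(E_{(1)}\rhd b)\bigl((E_{(2)}a)\rhd b'\bigr)=\sum(E_{(1)}\rhd b)\bigl(E_{(2)}\rhd(a\rhd b')\bigr)=m\bigl(E_A\rhd(b\o(a\rhd b'))\bigr)$, which by formula (3.3) of Proposition \ref{3.3} equals $b(a\rhd b')$. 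Unfolding $a\rhd b'=\sum\langle a,b'_{(2)}\rangle b'_{(1)}$ from the definition of $\varphi^l_{A,B}$ then rewrites this as $\sum bb'_{(1)}\langle a,b'_{(2)}\rangle$, which is the second displayed expression.

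For faithfulness I would take a general $X=\sum_k c_k\o d_k\in B\#_{E_A}A$ with $X\cdot b'=0$, that is $\sum_k c_k(d_k\rhd b')=0$ for all $b'\in B$, and aim to conclude $X=0$ in $B\o A$. Since the pairing is non-degenerate on each leg, it suffices to prove $\sum_k\langle a,c_k\rangle\langle d_k,b\rangle=0$ for all $a\in A$ and $b\in B$: taking the $c_k$ linearly independent makes the functionals $\langle\,\cdot\,,c_k\rangle$ independent, which forces each $\langle d_k,b\rangle=0$ and hence $d_k=0$. To produce this separated identity I would pair the hypothesis with $a\in A$ and use the two compatibilities coming from the pairing axioms, $\langle a,xy\rangle=\sum\langle a_{(1)},x\rangle\langle a_{(2)},y\rangle$ and $\langle ab,x\rangle=\sum\langle a,x_{(1)}\rangle\langle b,x_{(2)}\rangle$; this turns $\sum_k c_k(d_k\rhd b')=0$ into $\sum_k\sum\langle a_{(1)},c_k\rangle\langle a_{(2)}d_k,b'\rangle=0$, and dropping $b'$ by non-degeneracy leaves the entangled identity $\sum_k\sum\langle a_{(1)},c_k\rangle\,a_{(2)}d_k=0$ in $A$, valid for every $a$.

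The hard part will be the final disentangling: from $\sum_k\sum\langle a_{(1)},c_k\rangle\,a_{(2)}d_k=0$ I must extract the clean identity $\sum_k\langle a,c_k\rangle d_k=0$, which pairs with $b$ to give the desired separation. For a multiplier Hopf algebra this is immediate, since $\v$ is an algebra map and one merely applies it to collapse the surviving leg $a_{(2)}d_k$; but in the weak multiplier setting $\v$ is no longer multiplicative, so the collapse has to be routed through the source and target maps $\v_s,\v_t$ and a covering argument, exploiting the surjectivity of $\varphi^r_{B,A}$ and the non-degeneracy of the product of $B$ to remove the redundant factor $a_{(2)}$. This is precisely where the weak structure intervenes, and it is the step I would expect to demand the most care; every other part of the proof reduces to the pairing axioms, the product rule (4.1), and Proposition \ref{3.3}.
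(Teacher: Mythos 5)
Your first two steps (the module law over the full smash product $B\# A$, and the identification of the restricted action on $B\#_{E_A}A=E_A\rhd(B\# A)$ with the stated formula via (3.3)) are correct and consistent with the paper's computation; your observation that the map $b\o a\mapsto\bigl(b'\mapsto b(a\rhd b')\bigr)$ factors through the projection $E_A\rhd(\cdot)$ even makes explicit a well-definedness point that the paper leaves implicit. The genuine gap is in the faithfulness argument, which is the substantive half of the proposition.

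Pairing the hypothesis with $a\in A$ to obtain $\sum_k\sum\<a_{(1)},c_k\>a_{(2)}d_k=0$ for all $a$ is not a reduction: by non-degeneracy of the pairing this identity is \emph{equivalent} to the hypothesis $\sum_k c_k(d_k\rhd b')=0$, so the ``final disentangling'' you defer is the entire proof. Moreover, as you describe it, it cannot be carried out. Any derivation of the clean identity $\sum_k\<a,c_k\>d_k=0$ from the entangled one that does not use the membership $\sum_k c_k\o d_k\in E_A\rhd(B\o A)$ must fail: for any $Y\in B\o A$ the elements $Y$ and $E_A\rhd Y$ act identically on $B$ (this is exactly your own (3.3)-computation), so every element of the form $Y-E_A\rhd Y$ satisfies the entangled identity while being nonzero whenever $E_A\neq 1$; the action of the full $B\# A$ is simply not faithful in the genuinely weak case. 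Your proposed toolkit for the hard step --- surjectivity of $\varphi^r_{B,A}$, non-degeneracy of the product of $B$, the maps $\v_s,\v_t$ --- never brings this membership into play. (Your aside that in the multiplier Hopf case the collapse is immediate by applying $\v$ is also wrong: applying $\v$ only yields the scalar identity $\sum_k\<a,c_k\>\v(d_k)=0$; even there one argues on the $B$-leg, using $\D(B)(1\o B)=B\o B$.) The paper's proof supplies exactly the missing mechanism, and it stays on the $B$ side throughout: from $\sum_k\sum\<d_k,b'_{(2)}\>c_kb'_{(1)}=0$, multiply on the right by $b''\in B$, use $\D(B)(B\o 1)=E_B(B\o B)$ to replace $\sum b'_{(1)}b''\o b'_{(2)}$ by $E_{B(1)}q\o E_{B(2)}p$, cancel to get $\sum_k c_kE_{B(1)}\o d_k\lhd E_{B(2)}=0$ in $B\o A$, and then prove the key identity $cE_{B(1)}\o d\lhd E_{B(2)}=E_{A(1)}\rhd c\o E_{A(2)}d$ by a pairing computation using (2.2), (2.8) and $\<a\o a',E_B\>=\v_A(aa')$. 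That identity converts the conclusion into $E_A\rhd\bigl(\sum_k c_k\o d_k\bigr)=0$, which is precisely the vanishing of the given element of $B\#_{E_A}A$. This conversion of the $E_B$-smearing into the $E_A$-projection is where the weak structure actually enters, and it has no counterpart in your proposal.
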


\begin{proof}
For all $a',a''\in A$ and $b,b',b''\in B$, we have
\begin{eqnarray*}
((b''\#_Ea'')(b'\#_Ea'))\cdot b &\overset{(4.1)}{=}& (b''(a''_{(1)}\rhd b')\#_Ea'_{(2)}a')\cdot b\\
                            &\overset{(5.1)}{=}& b''(a''_{(1)}\rhd b')(a'_{(2)}a'\rhd b )\\
                            &\overset{(3.1)}{=}& b''(a''\rhd (b'(a'\rhd b')))\\
                            &\overset{(5.1)}{=}& (b''\#_Ea'')\cdot (b'(a'\rhd b))\\
                            &=& (b''\#_Ea'')(b'\#_Ea')\cdot b.
\end{eqnarray*}
So $B$ is a left $(B\#A)$-module. Here $a''_{(1)}$ is covered by $b'$.  Next we show that the action is faithful. Let $\sum b_i\#_Ea_i\in B\#_EA$ and assume that $\sum (b_i\#_Ea_i)\cdot b'=0$ for all $b'\in B$. Then
$$\sum \<a_i, b'_{(2)}\>b_ib'_{(1)}=0.$$
Multiply by $b''$ on the right we get that
$$\sum b_i\<a_i, E_{B(2)}p\>E_{B(1)}q=0$$
for any $p,q\in B$. So
$$\sum b_iE_{B(1)}\o a_i\lhd E_{B(2)}=0.$$
Now we claim that
$$bE_{B(1)}\o a\lhd E_{B(2)}=E_{A(1)}\rhd b\o E_{A(2)}a$$
for all $a\in A, b\in B$. And if this is true, we finish the proof. Indeed, for any $a'\in A, b'\in B$,
\begin{eqnarray*}
\<a',bE_{B(1)}\>\<a\lhd E_{B(2)}, b'\>
 &=& \sum \<\<a'_{(1)}, b\>a'_{(2)},E_{B(1)}\>\<a, E_{B(2)}b'\>\\
 &=& \sum \<a'_{(1)},b\>\v_A(a'_{(2)}a_{(1)})\<a_{(2)},b'\>\\
 &\overset{(2.2)} =& \sum \<a'_{(1)}E_{(1)},b\>\v_A(a'_{(2)}E_{(2)}a_{(1)})\<a_{(2)},b'\>\\
 &\overset{(2.8)} =& \sum \<a'_{(1)}a_{(2)}S^{-1}(a_{(1)}), b\>\v_A(a'_{(2)}a_{(3)})\<a_{(4)},b'\>\\
  &=& \sum \<a'a_{(2)}S^{-1}(a_{(1)}), b\>\<a_{(3)}, b\>\\
   &\overset{(2.8)} =& \<a', E_{A(1)}\rhd b\>\<E_{A(2)}a, b'\>.
\end{eqnarray*}
In the second equality and the fifth equality,  we use
$$\<a\o a',E\>=\<a\o a',\D(1)\>=\<aa',1\>=\v(aa').$$
 Note that everything is well covered here. For example, in the second equality   $a'_{(2)}$ is covered by $\sum a_{(1)}\<a_{(2)},b'\>\in A$.
\end{proof}

\begin{proposition}
$B\#_{E_A}A$ can be considered as the span of the elements $ab$ in the algebra $C$ generated by $A$ and $B$ subject to the following commutation relation
$$ab=\sum \<a_{(1)},b_{(2)}\>b_{(1)}a_{(2)}$$
for all $a\in A, b\in B$.
\end{proposition}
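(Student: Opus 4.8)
The plan is to realize $B\#_{E_A}A$ inside its own multiplier algebra through the two canonical homomorphisms of Proposition~\ref{4.8} and to read the asserted commutation relation directly off those maps. Writing $\pi_B$ for the homomorphism $\pi_R$ of Proposition~\ref{4.8} with $R=B$, we have algebra homomorphisms $\pi_A\colon A\lr M(B\#_{E_A}A)$ and $\pi_B\colon B\lr M(B\#_{E_A}A)$ satisfying $\pi_B(b)\pi_A(a)=b\#_Ea$, and by the remark following Proposition~\ref{4.8} we have $B\#_{E_A}A=\pi_A(A)\pi_B(B)=\pi_B(B)\pi_A(A)$. Hence the subalgebra of $M(B\#_{E_A}A)$ generated by $\pi_A(A)$ and $\pi_B(B)$ is the whole of $B\#_{E_A}A$, and it is the linear span of the products $\pi_A(a)\pi_B(b)$; this is the algebra $C$ of the statement, with $a$ and $b$ standing for $\pi_A(a)$ and $\pi_B(b)$.

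First I would establish the commutation relation. Applying the first identity of Proposition~\ref{4.8} with $r=b$ and then the defining formula $a_{(1)}\rhd b=\sum\langle a_{(1)},b_{(2)}\rangle b_{(1)}$ of the action $\varphi^l_{A,B}$, I compute
\begin{eqnarray*}
\pi_A(a)\pi_B(b)&=&\sum (a_{(1)}\rhd b)\#_E a_{(2)}\\
&=&\sum\langle a_{(1)},b_{(2)}\rangle\, b_{(1)}\#_E a_{(2)}\\
&=&\sum\langle a_{(1)},b_{(2)}\rangle\, \pi_B(b_{(1)})\pi_A(a_{(2)}),
\end{eqnarray*}
which, under the identification $a\leftrightarrow\pi_A(a)$, $b\leftrightarrow\pi_B(b)$, is exactly $ab=\sum\langle a_{(1)},b_{(2)}\rangle b_{(1)}a_{(2)}$. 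Here the covering is the one already used for $\varphi^l_{A,B}$: the leg $a_{(1)}$ is covered by $b$ through the pairing, while $a_{(2)}$ is covered once the multiplier is evaluated on an element of $B\#_{E_A}A$.

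It then remains to see that this single relation, together with the algebra structures of $A$ and $B$, is a complete presentation, i.e.\ that the abstractly generated algebra $C$ is isomorphic to $B\#_{E_A}A$ rather than a proper quotient or extension. The plan is to produce mutually inverse homomorphisms. The universal property established above (applied with target $C$ and with the generators $\iota_A,\iota_B$ of $C$, whose compatibility is precisely the defining relation) yields a homomorphism $\Psi\colon B\#_{E_A}A\lr C$ with $\Psi(b\#_Ea)=ba$; conversely, since $\pi_A,\pi_B$ satisfy the same relation in $M(B\#_{E_A}A)$, the presentation of $C$ gives a homomorphism $\Phi\colon C\lr M(B\#_{E_A}A)$ sending each generator to the corresponding $\pi$. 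Using the relation to push every $A$-factor to the right shows that $C$ is spanned by the normal forms $ba$, and on these $\Phi$ and $\Psi$ are visibly inverse to one another, giving $C\cong B\#_{E_A}A$. The main obstacle I anticipate is not the computation but the bookkeeping: making the abstractly defined $C$ precise, checking that the normal-form reduction and the two homomorphisms are well defined with all coverings in order, and confirming that no normal forms collapse beyond those already identified in $B\#_{E_A}A$. For that last point I would fall back on the faithful left $(B\#_{E_A}A)$-module $B$ of the preceding proposition together with the non-degeneracy of the product of $B\#_{E_A}A$ (Proposition~\ref{4.3}): since $\pi_B$ acts by left multiplication and $\pi_A$ by the module action, an element annihilating all of $B$ must already vanish, which rules out spurious collapsing and secures injectivity.
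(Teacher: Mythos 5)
Your proposal is correct and takes essentially the same route as the paper: both identify $B\#_{E_A}A$ with the span of the products via the homomorphism $b\#_E a\mapsto ba$ and extract injectivity from the faithfulness of $B$ as a left $(B\#_{E_A}A)$-module (Proposition 5.1), your map $\Phi$ built from $\pi_A,\pi_B$ playing exactly the role of the paper's ``action of $C$ on $B$''. One caveat: your claims that the subalgebra of $M(B\#_{E_A}A)$ generated by $\pi_A(A)$ and $\pi_B(B)$ \emph{equals} $B\#_{E_A}A$, and that $C\cong B\#_{E_A}A$, overshoot in this non-unital setting, since the generated algebra and $C$ also contain the single-letter words (for the pairing of $\mathbb{C}G$ with the functions of finite support on an infinite group $G$, the element $\lambda_g$ does not lie in the span of the products); this is harmless, because the proposition asserts only that $B\#_{E_A}A$ is the span of the elements $ab$ inside $C$, which is precisely what your mutually inverse maps $\Phi$ and $\Psi$, restricted to that span, establish.
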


\begin{proof}
From the definition of pairing we can get
$$\sum \<a_{(1)},b_{(2)}\>a_{(2)}\o b_{(1)}\in A\o B.$$
Then $C$ is well-defined. Define
$$\phi: B\#_EA\lr C,\quad b\#_Ea \mapsto ba$$
for $a\in A, b\in B$.   Since $B$ is a faithful module and we also have an action of $C$ on $B$, then $\phi$ is an injective homomorphism.
\end{proof}

Similarly $A\#_E B$ is the span of the elements $ab$ in the algebra generated by $A$ and $B$ subject to the commutation relation
$$ba=\sum \<a_{(2)},b_{(1)}\>a_{(1)}b_{(2)}$$
for any $a\in A, b\in B$. Then we can get the following easy result.

\begin{proposition} We have that
$B\#_{E_A}A$ is anti-isomorphic with $A\#_{E_B}B$.
\end{proposition}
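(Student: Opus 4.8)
The plan is to exhibit an explicit anti-homomorphism built from the two antipodes and to check that it respects the two commutation relations. Working inside the algebras generated by $A$ and $B$ described in the two previous propositions, I would define a map on generators by $a\mapsto S_A(a)$ for $a\in A$ and $b\mapsto S_B^{-1}(b)$ for $b\in B$, and extend it anti-multiplicatively; on a typical element of $B\#_{E_A}A$, written as $ba$, this reads $\Phi(ba)=S_A(a)S_B^{-1}(b)$, which is of the form (element of $A$)(element of $B$) and so lands in $A\#_{E_B}B$. The reason $S_B^{-1}$ rather than $S_B$ appears is that I will need the pairing identity $\langle S_A(a), S_B^{-1}(b)\rangle=\langle a,b\rangle$; this follows from the antipode--pairing compatibility $\langle S_A(a),b\rangle=\langle a,S_B(b)\rangle$ recorded in the structure theorem for $\hat A$, simply by substituting $S_B^{-1}(b)$ for $b$.

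The heart of the argument is to verify that $\Phi$ carries the defining relation of $B\#_{E_A}A$, namely $ab=\sum\langle a_{(1)},b_{(2)}\rangle b_{(1)}a_{(2)}$, into a valid identity in $A\#_{E_B}B$. Applying $\Phi$ anti-multiplicatively turns the left side into $S_B^{-1}(b)S_A(a)$ and the right side into $\sum\langle a_{(1)},b_{(2)}\rangle S_A(a_{(2)})S_B^{-1}(b_{(1)})$. I would then expand $S_B^{-1}(b)S_A(a)$ using the commutation relation $ba=\sum\langle a_{(2)},b_{(1)}\rangle a_{(1)}b_{(2)}$ that presents $A\#_{E_B}B$, applied with the $B$-element $S_B^{-1}(b)$ and the $A$-element $S_A(a)$. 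The key inputs are that $S_A$ and $S_B^{-1}$ are both anti-coalgebra maps, so that $\Delta(S_A(a))=\sum S_A(a_{(2)})\o S_A(a_{(1)})$ and similarly for $S_B^{-1}$, which lets me re-index the Sweedler legs; combined with $\langle S_A(a_{(1)}),S_B^{-1}(b_{(2)})\rangle=\langle a_{(1)},b_{(2)}\rangle$ the two sides match exactly. This simultaneously gives well-definedness and the anti-homomorphism property, since the product in $B\#_{E_A}A$ is encoded by precisely this relation.

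Finally I would establish bijectivity by producing an inverse of the same shape: the map $\Psi$ sending $a\mapsto S_A^{-1}(a)$ and $b\mapsto S_B(b)$, extended anti-multiplicatively, gives $\Psi(ab)=S_B(b)S_A^{-1}(a)$, and a direct check shows that $\Psi\circ\Phi$ and $\Phi\circ\Psi$ fix the generators, hence are the identity. Both $S_A^{\pm1}$ and $S_B^{\pm1}$ exist and are bijective because $A$ and $B$ are regular. I expect the main obstacle to be bookkeeping rather than conceptual: one must confirm that all the coverings are legitimate in the weak multiplier (non-unital) setting, so that the Sweedler re-indexings above are meaningful, and that $\Phi$ is genuinely defined on the spans $B\#_{E_A}A$ and $A\#_{E_B}B$ carved out by the canonical idempotents $E_A$ and $E_B$, rather than merely on a free product. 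The compatibility of the antipodes with $E_A$, $E_B$ and the source and target algebras recorded in Section \ref{two} should settle this, but it is the step demanding the most care.
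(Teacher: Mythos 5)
Your proposal is correct and takes essentially the same route as the paper: the paper simply states the anti-isomorphism $b\#_{E_A}a \mapsto S_A^{-1}(a)\#_{E_B}S_B(b)$ with no further verification, and your map $ba\mapsto S_A(a)S_B^{-1}(b)$ is the same construction up to the harmless replacement of $(S_A^{-1},S_B)$ by $(S_A,S_B^{-1})$ (both choices satisfy the needed compatibility $\langle S_A(a),S_B^{-1}(b)\rangle=\langle a,b\rangle$, the two maps differing by the automorphism induced by $S^2$). Your check that the commutation relations of the two presentations are interchanged, using the anti-coalgebra property of the antipodes and the antipode--pairing compatibility, is precisely the justification the paper leaves implicit.
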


The anti-isomorphism is defined as $b\#_{E_A}a \mapsto S^{-1}a\#_{E_B}S(b)$. With this map we can get a right  $(B\#_{E_A}A)$-module structure on $A$.

\begin{proposition}
For all $a,a'\in A, b\in B$, define
$$a'\lhd (b\#a)=(S^{-1}a\#_{E_B}S(b))\rhd a',$$
with this action, $A$ is a right faithful $(B\#_{E_A}A)$-module.
\end{proposition}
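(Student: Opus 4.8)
The plan is to produce this right module structure by transporting the mirror left module structure across the anti-isomorphism of the previous proposition, rather than verifying the module axioms by hand. First I would record the symmetric counterpart of the first proposition of this section. Interchanging the roles of $A$ and $B$ in the dual pair $\<A,B\>$ is legitimate because all four pairing maps $\vp^l_{A,B},\vp^r_{A,B},\vp^l_{B,A},\vp^r_{B,A}$ are available and the pairing axioms are symmetric in the two algebras. Applied in this way, that proposition shows that $A$ is a \emph{faithful} left $(A\#_{E_B}B)$-module via $(a\#_E b)\rhd a'=a(b\rhd a')=\sum a'_{(1)}\<a'_{(2)},b\>$ for $a,a'\in A$ and $b\in B$, where $b\rhd a'=\vp^l_{B,A}(b\o a')$ and $a'_{(1)}$ is covered by the element $b\rhd a'$. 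No new computation beyond this observation is needed for this first step.

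Next I would invoke the algebra anti-isomorphism $\p: B\#_{E_A}A\lr A\#_{E_B}B$, $b\#_{E_A}a\mapsto S^{-1}(a)\#_{E_B}S(b)$, already established. The key general principle is that whenever $\p:C\lr D$ is an algebra anti-isomorphism and $M$ is a left $D$-module, the formula $m\lhd c:=\p(c)\rhd m$ makes $M$ into a right $C$-module; indeed, for $c,c'\in C$ and $m\in M$ one has $(m\lhd c)\lhd c'=\p(c')\rhd(\p(c)\rhd m)=(\p(c')\p(c))\rhd m=\p(cc')\rhd m=m\lhd(cc')$, the crucial identity being the anti-multiplicativity $\p(cc')=\p(c')\p(c)$. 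Taking $C=B\#_{E_A}A$, $D=A\#_{E_B}B$ and $M=A$ with the left action recalled above yields exactly $a'\lhd(b\#a)=\p(b\#a)\rhd a'=(S^{-1}(a)\#_{E_B}S(b))\rhd a'$, so this is the right $(B\#_{E_A}A)$-module asserted in the statement.

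For faithfulness I would argue directly from the faithfulness of the transported action. Suppose $\sum_i(b_i\#_E a_i)\in B\#_{E_A}A$ annihilates $A$, that is $a'\lhd\sum_i(b_i\#a_i)=0$ for all $a'\in A$. By the definition this reads $\p\big(\sum_i b_i\#_E a_i\big)\rhd a'=0$ for all $a'$, and the faithfulness of the left $(A\#_{E_B}B)$-action from the first step forces $\p\big(\sum_i b_i\#_E a_i\big)=0$. Since $\p$ is a bijection, $\sum_i b_i\#_E a_i=0$, which is precisely the required faithfulness.

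The only genuinely delicate point is the first step: one must make sure that the faithful left $(A\#_{E_B}B)$-module structure on $A$, together with its proof, really is obtained by the symmetry $A\leftrightarrow B$ of the dual pairing, with the Sweedler-type coverings handled correctly (here $a'_{(1)}$ is covered by the element produced in $b\rhd a'$, exactly as in the faithfulness computation of the first proposition). Once that mirror statement is secured, everything else is a formal transport of structure along an anti-isomorphism, and the explicit antipode/smash-product formula in the statement is precisely what this transport produces.
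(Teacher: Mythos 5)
Your proposal is correct and follows essentially the paper's own route: the paper states this proposition without a separate proof precisely because it is the transport of the mirror of Proposition 5.1 (the faithful left $(A\#_{E_B}B)$-action on $A$) along the already-established anti-isomorphism $b\#_{E_A}a\mapsto S^{-1}(a)\#_{E_B}S(b)$, which is exactly your argument, including the bijectivity-of-$\p$ step for faithfulness. One small slip to fix: the mirrored action should read $(a\#_{E_B}b)\rhd a'=a(b\rhd a')=\sum aa'_{(1)}\langle a'_{(2)},b\rangle$, i.e.\ the leading factor $a$ must be retained in the final expression.
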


Inspired by the above module structure, $A$ can be also regarded as a faithful right $(B\#_{E_A}A)$-module with action
$$a'\lhd (b\#a)=(a'\lhd b)a$$
for any $a,a'\in A, b\in B$.

Now assume that $A$ is an algebraic quantum groupoid and $\hat{A}$ denotes the dual of $A$.  Recall that an algebraic quantum groupoid is a regular weak multiplier Hopf algebra with a faithful set of left integrals. Consider the dual pair $\<A,\hat{A}\>$, we have the smash product $A\#_{\hat{E}}\hat{A}$.
 And $A$ is a left faithful $A\#_{\hat{E}}\hat{A}$-module with the action given by
$$(a\#_{\hat{E}}b)a'=a(b\rhd a')=\sum aa'_{(1)}\<a'_{(2)}, b\>$$
for all $a,a'\in A, b\in \hat{A}$.

\begin{proposition}\label{5.5}
The algebra $A\#_{\hat{E}}\hat{A}$ as acting on $A$ is the span of the maps of the form $a'\mapsto aS(E_{(1)})\vp(E_{(2)}a')$ from $A$ to $A$, where $a,a'\in A$ and $\vp$ is a left integral on $A$.
\end{proposition}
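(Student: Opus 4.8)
The plan is to reduce everything to the generators of $\hat{A}$ and then simplify the resulting operator on $A$ by using the invariance of the integral. Since $\hat{A}$ is by definition the linear span of the functionals $\vp(x\c)$ with $\vp$ a left integral on $A$ and $x\in A$, and since $A\#_{\hat{E}}\hat{A}$ is spanned by the elements $a\#_{\hat{E}}b$, it suffices to describe the operator on $A$ attached to $a\#_{\hat{E}}(\vp(x\c))$. Using the given action and the identification $\<a'_{(2)},\vp(x\c)\>=\vp(xa'_{(2)})$, I would first record
$$
(a\#_{\hat{E}}(\vp(x\c)))a'=\sum aa'_{(1)}\vp(xa'_{(2)})
$$
for all $a,a',x\in A$. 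Here the left factor $a$ and the occurrence of $x$ inside $\vp$ cover $a'_{(1)}$ and $a'_{(2)}$, so the expression is a genuine element of $A$.

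The heart of the matter is to move the antipode across the integral. I would invoke the strong left invariance of $\vp$, which in the present Sweedler notation reads
$$
\sum a'_{(1)}\vp(xa'_{(2)})=\sum S(x_{(1)})\vp(x_{(2)}a')
$$
for all $x,a'\in A$; this is the weak-multiplier-Hopf analogue of the familiar integral identity and is precisely where one uses that $\vp$ is an integral rather than an arbitrary functional. Substituting, the operator becomes $a'\mapsto\sum aS(x_{(1)})\vp(x_{(2)}a')$. Now formula (2.7) gives $\sum S(x_{(1)})\o x_{(2)}=(1\o x)E$, hence $\sum aS(x_{(1)})\o x_{(2)}=(a\o x)E$, so the operator can be written as $a'\mapsto(\i\o\vp(\c\, a'))((a\o x)E)$. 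Reading off the two legs, this is exactly a map of the announced form $a'\mapsto aS(E_{(1)})\vp(E_{(2)}a')$, the element $x$ being carried by the dressed idempotent $(1\o x)E$. One checks that $aS(E_{(1)})\o E_{(2)}$ lies in $A\o A$ by recalling that the left leg of $E$ sits in $\v_s(A)$ (so its image under $S$ sits in $\v_t(A)$) and the right leg in $\v_t(A)$, which is the covering that makes $\vp(E_{(2)}a')$ and the prefactor $aS(E_{(1)})$ meaningful.

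It then remains to verify equality of the two spans. The inclusion "$\subseteq$" is the computation above. For "$\supseteq$" I would run it backwards: given $a\in A$ and a left integral $\vp$, use $(A\o A)E=T_2(A\o A)$ together with the surjectivity of the canonical maps to realise each map $a'\mapsto aS(E_{(1)})\vp(E_{(2)}a')$ as a finite linear combination of operators coming from honest generators $a\#_{\hat{E}}(\vp(x\c))$, and then appeal to the faithfulness of the action of $A\#_{\hat{E}}\hat{A}$ on $A$ to transport the equality of operators back to an equality inside the smash product. Faithfulness is what lets me conclude that coincidence of the associated maps on $A$ means coincidence of elements of $A\#_{\hat{E}}\hat{A}$.

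The step I expect to be the main obstacle is the strong-invariance identity. Unlike the multiplier Hopf algebra case treated in \cite{DVZ}, the integral theory for weak multiplier Hopf algebras is entangled with the canonical idempotent $E$ and with the maps $\v_s,\v_t$, so one must establish $\sum a'_{(1)}\vp(xa'_{(2)})=\sum S(x_{(1)})\vp(x_{(2)}a')$ with care (for example via the antipode identity (2.3) and the covering formulas (2.6)--(2.8)) and check that no spurious $\v_t$-factor blocks the passage to the $E$-form. The remaining difficulty is bookkeeping: showing that, after the element $x$ is absorbed into $(1\o x)E$, the two families of maps really span the same subspace of $\mathrm{Hom}(A,A)$, for which the identity $(A\o A)E=T_2(A\o A)$ and the faithfulness of the module are the essential tools.
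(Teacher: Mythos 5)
Your opening matches the paper's proof: the computation $(a\#_{\hat{E}}\vp(x\,\c))a'=\sum aa'_{(1)}\vp(xa'_{(2)})$ and the passage, via strong left invariance, to the operator $a'\mapsto\sum aS(x_{(1)})\vp(x_{(2)}a')$ are exactly the paper's first two steps (the paper cites this as "the property of the integral"). The genuine gap is in the next step. The identity you extract from (2.7), namely $\sum S(x_{(1)})\o x_{(2)}=(1\o x)E$, is a misreading: formula (2.7) says $(1\o x)E=\sum (S(x_{(1)})\o 1)\D(x_{(2)})=\sum S(x_{(1)})x_{(2)}\o x_{(3)}=\sum \v_s(x_{(1)})\o x_{(2)}$, so the factor $\D(x_{(2)})$ must still be multiplied in, and it is not. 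Your identity already fails for an ordinary Hopf algebra, where $E=1\o 1$: it would force $\sum S(x_{(1)})\o x_{(2)}=1\o x$, which is false for any group-like $x\neq 1$. Moreover, even granting it, the operator you obtain is $a'\mapsto aE_{(1)}\vp(xE_{(2)}a')$, which is visibly not of the announced shape: no antipode acts on $E_{(1)}$, and the element $x$ has not been absorbed; asserting that this "is exactly" a map $a'\mapsto aS(E_{(1)})\vp(E_{(2)}a')$ glosses over precisely the point to be proved.

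What is actually needed (and what the paper does) is a span argument, not a pointwise identity. By regularity, $\D(A)(A\o 1)=E(A\o A)$, and $S$ is bijective; applying $S\o \i$ to this equality of subspaces of $A\o A$ and using the anti-multiplicativity of $S$ gives
$$
\text{span}\{\sum aS(x_{(1)})\o x_{(2)}\mid a,x\in A\}
=\text{span}\{pS(E_{(1)})\o E_{(2)}q\mid p,q\in A\}.
$$
Evaluating the second leg against $\vp(\c\,a')$ converts this into the desired identification of the two families of operators, and since it is an equality of subspaces it yields both inclusions simultaneously; your separate "run it backwards" step and the appeal to faithfulness of the action on $A$ are then unnecessary for this proposition (faithfulness is only needed afterwards, in Proposition \ref{5.6}, to pull the operator picture back into the smash product itself).
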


\begin{proof}
Let $a'\in A,b=\vp(a'\cdot)\in \hat{A}$. By Remark 3.3, we have
\begin{eqnarray*}
(a\#b)\rhd a'' &=& \sum aa''_{(1)}\vp(a'a''_{(2)})\\
               &=& \sum aS(a'_{(1)})\vp(a'_{(2)}a'').
\end{eqnarray*}
Here we use the property of the integral. Since $S$ is bijective and $\D(A)(A\o 1)=E(A\o A)$, so
$$(a\#b)\rhd a''=pS(E_{(1)})\vp(E_{(2)}q)$$
where $p,q\in A$. This finishes the proof.
\end{proof}

Consider the algebra $A\o \hat{A}$ with the product given by
$$(a\o b)(a'\o b')= \<a',b\>a\o b'$$
where $a,a'\in A, b,b'\in \hat{A}$. We will use $A\diamondsuit \hat{A}$ to denote $A\o \hat{A}$ with above product
 and $a\diamondsuit b$ to denote the element $a\o b$. Let
$$A\bar{\diamondsuit}\hat{A}=\{S(E_{(1)}a)\o \vp(E_{(2)}c\cdot )\mid  a,c\in A\}.$$
It is a sub-algebra of $A\diamondsuit \hat{A}$. Following the  idea as in Proposition 6.7 in \cite{DVZ}, we have the following result.

\begin{proposition}\label{5.6} We have that
$A\#_{\hat{E}}\hat{A}$ is isomorphic with $A\bar{\diamondsuit}\hat{A}$.
\end{proposition}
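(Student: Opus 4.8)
The plan is to realise both $A\#_{\hat{E}}\hat{A}$ and $A\diamondsuit\hat{A}$ faithfully as algebras of operators on the single vector space $A$ and then to check that the subalgebra $A\bar{\diamondsuit}\hat{A}$ has exactly the same image as $A\#_{\hat{E}}\hat{A}$. Since the pairing $\<A,\hat{A}\>$ is non-degenerate and $A$ is a faithful module on each side, an equality of images will upgrade to the desired isomorphism.

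First I would set up the representation of $A\diamondsuit\hat{A}$. Send $a\diamondsuit b$ to the rank-one map $\th_{a,b}\colon x\mapsto a\<x,b\>$ on $A$. A direct computation gives $\th_{a,b}\ci\th_{a',b'}=\<a',b\>\,\th_{a,b'}$, which matches the product $(a\diamondsuit b)(a'\diamondsuit b')=\<a',b\>a\diamondsuit b'$; hence $\Theta\colon A\diamondsuit\hat{A}\lr \mathrm{End}(A)$, $a\diamondsuit b\mapsto\th_{a,b}$, is an algebra homomorphism. Non-degeneracy of the pairing forces $\Theta$ to be injective, so its restriction to the subalgebra $A\bar{\diamondsuit}\hat{A}$ is an isomorphism onto $\Theta(A\bar{\diamondsuit}\hat{A})$.

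On the other side, $A$ is a faithful left $A\#_{\hat{E}}\hat{A}$-module, so the module action defines an injective homomorphism $\Lambda\colon A\#_{\hat{E}}\hat{A}\lr\mathrm{End}(A)$. By Proposition \ref{5.5} (and the computation in its proof), for $b=\vp(a'\c)$ the operator $\Lambda(a\#_{\hat{E}}b)$ sends $a''$ to $\sum aS(a'_{(1)})\vp(a'_{(2)}a'')$, and after using $\D(A)(A\o 1)=E(A\o A)$ this is exactly the operator $a''\mapsto pS(E_{(1)})\vp(E_{(2)}qa'')$ for suitable $p,q\in A$ ranging freely over $A$. Writing $p=S(a)$ (legitimate since $S$ is bijective) and $q=c$, and using $S(a)S(E_{(1)})=S(E_{(1)}a)$, this operator equals $\Theta\big(\sum S(E_{(1)}a)\diamondsuit\vp(E_{(2)}c\c)\big)$. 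As $p,q$ run through $A$ these elements span $A\bar{\diamondsuit}\hat{A}$ by definition, so $\Lambda(A\#_{\hat{E}}\hat{A})=\Theta(A\bar{\diamondsuit}\hat{A})$. Both $\Lambda$ and $\Theta|_{A\bar{\diamondsuit}\hat{A}}$ are then isomorphisms onto this common subalgebra of $\mathrm{End}(A)$, and $\Theta^{-1}\ci\Lambda$ is the required isomorphism, sending $a\#_{\hat{E}}\vp(a'\c)$ to $\sum aS(a'_{(1)})\diamondsuit\vp(a'_{(2)}\c)$.

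The main obstacle is the bookkeeping in the middle step rather than any deep idea. One must check that $\sum aS(a'_{(1)})\o\vp(a'_{(2)}\c)$ is a genuinely well-defined element of $A\o\hat{A}$, which is a covering issue of the type flagged in Remark 2.3 (the expression $\sum cS(a_{(1)})\o a_{(2)}$ is well covered in the regular case), and the passage between the presentation with $a'$ and the presentation with $E$ must keep every leg covered while invoking $E\D(a')=\D(a')$ and the antipode relations. No further well-definedness problem arises in the assignment $a\#_{\hat{E}}\vp(a'\c)\mapsto\sum aS(a'_{(1)})\diamondsuit\vp(a'_{(2)}\c)$, since it is obtained as the composite $\Theta^{-1}\ci\Lambda$ of two honest homomorphisms; the faithful set of integrals enters only through the non-degeneracy of the pairing that makes $\Theta$ injective.
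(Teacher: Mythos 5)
Your proof is correct and takes essentially the same approach as the paper's (sketched) proof: represent both algebras faithfully as operators on $A$, use Proposition \ref{5.5} to identify the two images, and obtain the isomorphism $a\#_{\hat{E}}\vp(c\cdot)\mapsto \sum aS(c_{(1)})\diamondsuit\vp(c_{(2)}\cdot)$, which is exactly the paper's map $\theta$. Your write-up is merely more explicit than the paper's sketch about the injectivity of the two representations and the matching of spans via $S(p)S(E_{(1)})=S(E_{(1)}p)$.
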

\begin{proof}
We give the sketch of the proof. The isomorphism map $\theta: A\#_{\hat{E}}\hat{E}\lr  A\bar{\diamondsuit}\hat{A}$ is given by
$$\theta(a\#_{\hat{E}}\vp(c\cdot))=\sum aS(c_{(1)})\diamondsuit \vp(c_{(2)}\cdot).$$
And we know that $A$ is a faithful left  $A\bar{\diamondsuit} \hat{A}$-module with the action $(a\diamondsuit b)\rhd a'=\<a',b\>a$ for $a,a'\in A, b\in \hat{A}$.
 By Proposition \ref{5.5}, we can get that $\theta$ is an isomorphism.
\end{proof}

\section{Duality Theorem}\label{six}

In this section we will study the bi-smash products and obtain the duality theorem for an algebraic quantum groupoid $A$.  Let $(A,\D,E_A)$ be an algebraic quantum groupoid, acting on an algebra $R$, so we can define the smash product $R\#_EA$. Let $B$ be another  algebraic quantum groupoid paired with $A$. Also assume the pairing is non-degenerate. Given a pairing $\<A, B\>$, there is a left action  of $B$ on $A$ $: b\rhd a=\sum a_{(1)}\<a_{(2)}, b\>$, with this action we can give $R\#_EA$ a $B$-module algebra structure.

\begin{proposition}
$R\#_EA$ is a left $B$-module algebra with the action $\cdot$ given by
$$b\cdot (r\#_E a)= r\#_E(b\rhd a)=\sum r\#_Ea_{(1)}\<a_{(2)}, b\>$$
for all $a\in A, b\in B, r\in R$.
\end{proposition}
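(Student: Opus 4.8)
The plan is to verify the two defining conditions of a left $B$-module algebra for the given action: first that $\cdot$ makes $R\#_EA$ into a unital left $B$-module, and second that $b$ acts as a ``twisted derivation'' with respect to the multiplication of $R\#_EA$, namely $b\cdot(XY)=\sum(b_{(1)}\cdot X)(b_{(2)}\cdot Y)$ for $X,Y\in R\#_EA$. Since $(A,\varphi^l_{B,A})$ is already a left $B$-module algebra by the earlier proposition on the four pairing maps, the action $b\rhd a=\sum a_{(1)}\<a_{(2)},b\>$ is a known unital left $A$-module-algebra action of $B$ on $A$; I would leverage this throughout rather than re-deriving module axioms from scratch.

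First I would check that the action is well-defined on $R\#_EA$ and unital. The element $r\#_Ea$ is really $E\rhd(r\# a)$, so I must confirm $b\cdot(r\#_Ea)=r\#_E(b\rhd a)$ respects this, which reduces to showing the $B$-action commutes past the idempotent $E$ on the $A$-leg; this follows because $b\rhd(-)$ acts only on the second tensor factor and $E\in M(A\o A)$ is a multiplier, together with the fullness of $\D$ used to cover $a_{(2)}$ by $b$. Unitality is inherited from the fact that $B$ acts unitally on $A$ (the map $\varphi^l_{B,A}$ is surjective), so $B\cdot(R\#_EA)=R\#_EA$. The module axiom $b\cdot(b'\cdot X)=(bb')\cdot X$ transfers directly from the corresponding identity $b\rhd(b'\rhd a)=(bb')\rhd a$ in the $B$-module algebra $A$.

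The substantive step is the module-algebra compatibility. Taking $X=r\#_Ea$ and $Y=s\#_Ea'$, I would expand the product using (4.1), giving $XY=\sum r(a_{(1)}\rhd s)\#_E a_{(2)}a'$, then apply $b\cdot(-)$ to obtain $\sum r(a_{(1)}\rhd s)\#_E b\rhd(a_{(2)}a')$. On the other side, $\sum(b_{(1)}\cdot X)(b_{(2)}\cdot Y)=\sum\bigl(r\#_E(b_{(1)}\rhd a)\bigr)\bigl(s\#_E(b_{(2)}\rhd a')\bigr)$, which after (4.1) becomes $\sum r\bigl((b_{(1)}\rhd a)_{(1)}\rhd s\bigr)\#_E(b_{(1)}\rhd a)_{(2)}(b_{(2)}\rhd a')$. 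The key identity needed is that $\rhd$ is a coalgebra-compatible action, i.e. that the coproduct of $b\rhd a$ satisfies $\sum(b\rhd a)_{(1)}\o(b\rhd a)_{(2)}=\sum(b_{(1)}\rhd a_{(1)})\o(b_{(2)}\rhd a_{(2)})$ in the appropriate covered sense; this is exactly the statement that the pairing $\<A,B\>$ is compatible with both coproducts, which I would extract from the pairing axioms by pairing against an arbitrary $x\in A$ and using $\<x,b\rhd a\>$-type manipulations as in the proof that $\varphi^l_{A,B}$ is a module-algebra action.

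The main obstacle I anticipate is the bookkeeping of coverings: because $\D(a)$ lives in $M(A\o A)$ and $E$ is only a multiplier, each Sweedler index must be legitimately covered either by one of $r,s$ (through $\rhd$ on $R$) or by the surjectivity of the pairing maps, and the $B$-legs $b_{(1)},b_{(2)}$ must be covered by the surrounding $a,a'$. I would handle this by pairing the whole identity against a test element $x\in A$, reducing the module-algebra equation to a scalar identity among the bilinear forms where all Sweedler notation is genuinely covered, verify that scalar identity using the pairing axioms (compatibility of $\<\cdot,bb'\>$ with $\D$) exactly as in the earlier module-algebra proof, and then invoke non-degeneracy of the pairing together with non-degeneracy of the product in $R\#_EA$ (Proposition \ref{4.3}) to conclude the equality of the two multipliers.
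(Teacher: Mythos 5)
Your overall route---direct computation with the pairing axioms, reusing the fact that $(A,\varphi^l_{B,A})$ is a left $B$-module algebra---is the same as the paper's, and your handling of well-definedness, unitality and the module axiom is fine. The gap is that the identity you designate as the key step is false. What you need is a formula for $\D(b\rhd a)$, and the correct one is
$$\D(b\rhd a)=\sum a_{(1)}\o (b\rhd a_{(2)}),$$
i.e. $(\i\o b\rhd)\D(a)$: since $b\rhd a=\sum a_{(1)}\<a_{(2)},b\>$ pairs $b$ only against the last leg of the coproduct, coassociativity lets the action slide through $\D$ by acting on the second leg only, leaving the first leg untouched. Your claimed identity $\D(b\rhd a)=\sum(b_{(1)}\rhd a_{(1)})\o(b_{(2)}\rhd a_{(2)})$ (the diagonal action, which would say that $A$ is a $B$-module \emph{coalgebra}) fails already in the unital Hopf case: for $A=\mathbb{C}G$ and $B$ the function algebra of a finite group $G$, a grouplike $g$ gives $\D(b\rhd g)=b(g)(g\o g)$, whereas $\sum(b_{(1)}\rhd g)\o(b_{(2)}\rhd g)=b(g^2)(g\o g)$; take $b=\delta_g$ with $g^2\neq g$.

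This is not a cosmetic slip, because with your identity the computation does not close: the right-hand side $\sum(b_{(1)}\cdot X)(b_{(2)}\cdot Y)$ would become $\sum r\bigl((b_{(1)}\rhd a_{(1)})\rhd s\bigr)\#_E\, b_{(2)}\rhd(a_{(2)}a')$, and the spurious action of $b_{(1)}$ on the leg $a_{(1)}$ that hits $s$ cannot be removed. With the correct identity that leg is untouched, and the two sides then match after a single application of the module-algebra property $b\rhd(xy)=\sum(b_{(1)}\rhd x)(b_{(2)}\rhd y)$---equivalently the pairing axiom $\<xy,b\>=\sum\<x,b_{(1)}\>\<y,b_{(2)}\>$---which is exactly the paper's five-line calculation. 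A secondary problem is your fallback plan of pairing the whole identity against a test element $x\in A$: this does not typecheck, since the equation to be proved lives in $R\#_EA$, whose $A$-leg pairs with $B$ (not with $A$) and whose $R$-leg cannot be paired away at all; non-degeneracy arguments can only be invoked after you have isolated an identity in $A$ or $A\o A$, and isolating it is precisely the step where the correct intertwining formula, not yours, is required.
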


\begin{proof}
First we show it is a unital module. Let $b,b'\in B, r\in R, a\in A$
\begin{eqnarray*}
(bb')\cdot (r\#_Ea) &=& r\#_E(bb'\rhd a)\\
                    &=& \sum r\#_Ea_{(1)}\<a_{(2)}, bb'\>\\
                    &=& \sum r\#_Ea_{(1)}\<a_{(2)}, b\>\<a_{(3)}, b'\>\\
                    &=& \sum b\cdot (b'\cdot (r\#_Ea)).
\end{eqnarray*}
Since $A$ is a unital $B$-module, so $R\#_EA$ is unital. For $b\in B, r,r'\in R, a,a'\in A$, we have
\begin{eqnarray*}
b\cdot ((r\#_Ea)(r'\#_Ea')) &=& \sum b\cdot (ra_{(1)}r'\#_Ea_{(2)}a') \\
                            &=& \sum ra_{(1)}r'\#_E (b\rhd (a_{(2)}a'))\\
                            &=& \sum ra_{(1)}r'\#_E a_{(2)}a'_{(1)}\<a_{(3)}, b_{(1)}\>\<a'_{(2)}, b_{(2)}\>\\
                            &=& \sum (r\#_E a_{(1)}\<a_{(2)},b_{(1)}\>)(r'\#_E a'_{(1)}\<a'_{(2)},b_{(2)}\>)\\
                            &=& \sum (b_{(1)}\cdot (r\#_E a))(b_{(2)}\cdot (r'\#_Ea')).
\end{eqnarray*}
So $R\#_EA$ is a $B$-module algebra. Remark that in the above calculations we always have the well coverings.
\end{proof}

Now we can form the bi-smash product $(R\#_{E_A}A)\#_{E_B}B$. And the following result is a consequence of
 Proposition 5.1.

\begin{proposition}
$R\#_{E_A}A$ is a faithful left $(R\#_{E_A}A)\#_{E_B}B$-module with  the action given by
\begin{eqnarray*}
((r\#_{E_A}a)\#_{E_B}b)(r'\#_{E_A}a') &=& (r\#_{E_A}a)(r'\#_{E_A}b\rhd a').
\end{eqnarray*}
\end{proposition}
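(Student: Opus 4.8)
The plan is to follow the template of Proposition 5.1, now with the left $B$-module algebra $S:=R\#_{E_A}A$ of the preceding proposition playing the role that $B$ itself played there. Writing $s=r\#_{E_A}a$ and $b\cdot s'=r'\#_{E_A}(b\rhd a')$ for the $B$-action on $S$, the proposed action reads compactly as $(s\#_{E_B}b)\cdot s'=s(b\cdot s')$, the product on the right being taken in $S$. So I would split the argument into the module axiom and the faithfulness, exactly as in Proposition 5.1.

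First I would verify the module property. For $s,s''\in S$, $b,b''\in B$ and $s'\in S$, I expand the product $(s\#_{E_B}b)(s''\#_{E_B}b'')$ in the bi-smash product by (4.1), apply the action, and use that $S$ is a left $B$-module algebra, i.e. the identity $b\cdot(s''s')=\sum(b_{(1)}\cdot s'')(b_{(2)}\cdot s')$ established in the preceding proposition together with (3.1). Reassembling by the definition of the action then gives $((s\#_{E_B}b)(s''\#_{E_B}b''))\cdot s'=(s\#_{E_B}b)\cdot((s''\#_{E_B}b'')\cdot s')$. This is the verbatim analogue of the first display in the proof of Proposition 5.1 and is routine; the coverings are the same, with $b_{(1)}$ covered by $s''$.

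Second, and this is the main obstacle, I would prove faithfulness. Assume $\sum_i(s_i\#_{E_B}b_i)\cdot s'=0$ for all $s'\in S$, with $s_i=r_i\#_{E_A}a_i$. Unwinding via (4.1) and $b_i\rhd a'=\sum a'_{(1)}\<a'_{(2)}, b_i\>$, this becomes $\sum_i s_i(r'\#_{E_A}(b_i\rhd a'))=0$ for all $r'\in R$, $a'\in A$. The strategy is to peel off the two free variables in turn. Using $\D(A)(1\o A)=E_A(A\o A)$ to control the coproduct in the $A$-slot, and the non-degeneracy of the smash product $S=R\#_{E_A}A$ (Proposition \ref{4.3}), I would cancel the $r'$-dependence, reducing the hypothesis to an identity living in $S\o B$ in which the $B$-leg appears as $b_i\lhd E_{B}$. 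Then the non-degeneracy of the pairing $\<A,B\>$, together with the bridging identity between the canonical idempotents used in Proposition 5.1, namely $bE_{B(1)}\o a\lhd E_{B(2)}=E_{A(1)}\rhd b\o E_{A(2)}a$, lets me transfer the $B$-leg onto $E_B$ and conclude that $\sum_i s_i\#_{E_B}b_i=0$ in $(R\#_{E_A}A)\#_{E_B}B$.

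I expect the delicate point to be the simultaneous bookkeeping of the two idempotents $E_A$ and $E_B$ while keeping every Sweedler leg covered: the cancellation of $r'$ must be carried out before invoking the pairing, and the insertion of $E_A$ must respect that $s'$ ranges over the $E_A$-covered smash product rather than over all of $R\o A$. Once the reduction to the dual-pair identity of Proposition 5.1 is in place, faithfulness follows exactly as there, since the pairing $\<A,B\>$ is non-degenerate by hypothesis.
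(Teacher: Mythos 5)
Your plan follows the route the paper itself intends: the paper offers no argument for this proposition beyond the remark that it is a consequence of Proposition 5.1, and your verification of the module axiom is indeed the verbatim adaptation of the first computation there. The genuine gap is precisely at the step you flag as the main obstacle, the cancellation of $r'$, and as you describe it that step would fail. In the hypothesis $\sum_i s_i\bigl(r'\#_{E_A}(b_i\rhd a')\bigr)=0$ the variable $r'$ is trapped: expanding the smash multiplication spreads the Sweedler legs of $a_i$ over both factors, $\sum_i\sum r_i(a_{i(1)}\rhd r')\#_{E_A}\,a_{i(2)}(b_i\rhd a')$, so $r'$ occurs only inside $r_i(a_{i(1)}\rhd r')$, entangled with the coproduct of $a_i$; the expression is not of the form $(\cdots)x$ or $x(\cdots)$ in $r'$, and neither Proposition \ref{4.3} nor any insertion of $E_A$ applies to it directly. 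The missing idea is Proposition \ref{4.8}: since $R\#_{E_A}A=\pi_A(A)\pi_R(R)$, test the hypothesis on $s'=\pi_A(a')\pi_R(r')$ rather than on $r'\#_{E_A}a'=\pi_R(r')\pi_A(a')$. The commutation relation $\pi_A(a)\pi_R(r)=\sum\pi_R(a_{(1)}\rhd r)\pi_A(a_{(2)})$ gives $b_i\cdot\bigl(\pi_A(a')\pi_R(r')\bigr)=\pi_A(b_i\rhd a')\pi_R(r')$, so the hypothesis becomes $\sum_i s_i\pi_A(b_i\rhd a')\pi_R(r')=0$ with $\pi_R(r')$ now at the far right; multiplying on the right by $\pi_A(a'')$ and using $\pi_R(r')\pi_A(a'')=r'\#_{E_A}a''$, which runs over all of $S$, Proposition \ref{4.3} then yields the $r'$-free identity $\sum_i s_i\pi_A(b_i\rhd a')=0$ for all $a'\in A$. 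Only from this point does the Proposition 5.1 template (right multiplication by $\pi_A(a'')$, the range $\D(A)(A\o 1)=E_A(A\o A)$, non-degeneracy of the pairing) run as you describe.

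A second, smaller defect is the idempotent bookkeeping in your last step. The identity one reaches in $S\o B$ is $\sum_i s_i\pi_A(E_{A(1)}p)\o \,b_i\lhd E_{A(2)}=0$ for all $p\in A$: the $B$-leg is smeared by a leg of $E_A$ acting through the right $A$-action on $B$; your ``$b_i\lhd E_B$'' does not typecheck, since $\lhd$ takes its argument in $M(A)$. Correspondingly, the bridging identity needed is not the one you quote from Proposition 5.1 but its $A\leftrightarrow B$ swap, $aE_{A(1)}\o b\lhd E_{A(2)}=E_{B(1)}\rhd a\o E_{B(2)}b$, which converts the $E_A$-smearing above into $\sum_i E_{B(1)}\cdot s_i\o E_{B(2)}b_i$, i.e. into the desired conclusion $\sum_i s_i\#_{E_B}b_i=0$. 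This swapped identity does hold, by repeating the computation of Proposition 5.1 with the roles of $A$ and $B$ exchanged (using $\<E_A,\,b\o b'\>=\v_B(bb')$ in place of $\<a\o a',E_B\>=\v_A(aa')$), but it is a different statement from the one you cite, and it is the one that must be proved and invoked.
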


Let $R$ be an $A$-module algebra.  Set
$$R\bar{\o}A=\{r\o a\in R\o A \mid r\o a=S^{-1}(E_{(1)})\cdot r\o aE_{(2)})\}$$
Note that $(1\o a)E$ belongs to $\v_s(A)\o A$. Now consider the  map $T: R \bar{\o} A\lr R\#_EA$ which is defined by
$$T(r \bar{\o} a)=\sum a_{(1)}r\#_Ea_{(2)}$$
for all $r\in R, a\in A$. In fact the map is bijective, and its inverse is given by
$$T^{-1}(r\#_Ea)=\sum S^{-1}(a_{(1)})r \bar{\o} a_{(2)}.$$
Remark that  $T$ is well-defined. For $a\in A, r\in R$, we get
\begin{eqnarray*}
T(S^{-1}(E_{(1)})\cdot r\o a E_{(2)}))     &=& T(\v'_t(a_{(1)})\cdot r\o a_{(2)})\\
                                    &=& a_{(2)}\v'_t(a_{(1)})\cdot r\#_E a_{(3)}\\
                                    &=& a_{(1)}\cdot r\#_E a_{(2)} \\
                                    &=& T(r\o a).
\end{eqnarray*}

And with the map $T$ we can define a new faithful action $\pi$ of $(R\#_{E_A}A)\#_{E_B}B$ on $R\bar{\o}A$.

\begin{proposition}
For all $a,a'\in A, r, r'\in R, b\in B$, define the action $\pi$ as
$$\pi((r\#_{E_A}a)\#_{E_B}b)(r'\bar{\o}a')=T^{-1}((r\#_{E_A}a)\#_{E_B}b)T(r'\bar{\o}a'),$$
with the action $\pi$, we have that $R\bar{\o}A$ is a faithful $(R\#_{E_A}A)\#_{E_B}B$-module.
\end{proposition}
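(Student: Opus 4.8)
The plan is to recognize that the action $\pi$ is nothing but the transport of the faithful action of the preceding proposition along the linear bijection $T$, so that the whole statement reduces to a transport-of-structure argument. Write $D=(R\#_{E_A}A)\#_{E_B}B$ for the bi-smash product and $N=R\#_{E_A}A$, and denote by $d\c n$ the faithful left $D$-action of the preceding proposition, for $d\in D$ and $n\in N$. Recall that $T\colon R\bar{\o}A\lr N$ was already shown to be a bijection with inverse $T^{-1}$, whose codomain is exactly $R\bar{\o}A$; hence for each $d\in D$ the formula $\pi(d)(v)=T^{-1}(d\c T(v))$ automatically takes values in $R\bar{\o}A$ and is linear in $v$, so that no separate well-definedness check is required.

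First I would verify the module axiom. For $d,d'\in D$ and $v\in R\bar{\o}A$ one computes, using that $TT^{-1}=\mathrm{id}$ on $N$ and that $\c$ is an action,
$$\pi(d)\pi(d')(v)=T^{-1}\big(d\c T(T^{-1}(d'\c T(v)))\big)=T^{-1}\big(d\c(d'\c T(v))\big)=T^{-1}\big((dd')\c T(v)\big)=\pi(dd')(v).$$
Thus $\pi(dd')=\pi(d)\pi(d')$, so $R\bar{\o}A$ is a left $D$-module; equivalently, $T$ is by construction an isomorphism of left $D$-modules from $(R\bar{\o}A,\pi)$ onto $(N,\c)$.

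Finally I would transfer faithfulness across this isomorphism. Suppose $\pi(d)(v)=0$ for all $v\in R\bar{\o}A$. Applying the injective map $T$ gives $d\c T(v)=0$ for all $v$, and since $T$ is surjective onto $N$ the element $T(v)$ runs over all of $N$, so $d\c n=0$ for every $n\in N$. By the faithfulness of the $D$-action on $N$ established in the preceding proposition, this forces $d=0$, and hence $\pi$ is faithful.

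There is no genuine obstacle here: the real work was already carried out in the earlier results, namely the construction of the faithful action of $D$ on $N$ and the verification that $T$ is a well-defined bijection onto $R\bar{\o}A$ with the stated inverse. The only points requiring care are to quote the bijectivity of $T$, so that the conjugation $T^{-1}(\,d\c T(\cdot)\,)$ is available and lands in the right space, and the faithfulness of the previous action; once these are invoked, both the module property and the faithfulness claim follow formally.
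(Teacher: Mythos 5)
Your proposal is correct: since Proposition 6.2 already gives a genuine (associative) faithful action of $D=(R\#_{E_A}A)\#_{E_B}B$ on $N=R\#_{E_A}A$, and $T$ is a linear bijection of $R\bar{\o}A$ onto $N$, conjugation transports both the module axiom and faithfulness, exactly as you argue. This is, however, a genuinely different route from the paper's. The paper first computes $T^{-1}bT$ and $T^{-1}(r\#_{E_A}a)T$ separately, assembles the closed formula
$$\pi((r\#_{E_A}a)\#_{E_B}b)(r'\bar{\o}a')=\sum (S^{-1}(a_{(1)}a'_{(1)})\cdot r)r'\bar{\o}a_{(2)}a'_{(2)}\<a'_{(3)}, b\>,$$
and then verifies multiplicativity by expanding $\pi(dd')(v)$ and $\pi(d)\pi(d')(v)$ in Sweedler notation, checking along the way that all legs are properly covered; only the faithfulness step coincides with yours (bijectivity of $T$ plus Proposition 6.2). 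Your abstract argument is shorter and sidesteps the covering issues that make Sweedler-type computations delicate in the weak multiplier setting, since it never writes the action in terms of legs of the coproduct. What it does not deliver is the explicit operator formula for $\pi$, and that formula is not a by-product the paper could have skipped: the proof of the duality theorem (Theorem 6.4) starts from precisely this description of the algebra of operators $P$ as a span of maps $r'\bar{\o}a'\mapsto \sum (S^{-1}(a_{(1)}a'_{(1)})\cdot r)r'\bar{\o}a_{(2)}a'_{(2)}\<a'_{(3)}, b\>$. So if your proof were adopted, the computation you omitted would have to be reinstated inside the proof of Theorem 6.4; as a proof of this proposition alone, though, it is complete and arguably cleaner.
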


\begin{proof}
First we have
\begin{eqnarray*}
T^{-1}bT(r\bar{\o}a) &=& T^{-1}\sum a_{(1)}\cdot r\#_E(b\rhd a_{(2)})\\
                     &=& \sum \<a_{(4)}, b\>S^{-1}(a_{(2)})a_{(1)}\cdot r\bar{\o}a_{(3)}\\
                     &=& \sum r\bar{\o}a_{(1)}\<a_{(2)}, b\>
\end{eqnarray*}
and
\begin{eqnarray*}
T^{-1}(r\#_Ea)T(r'\bar{\o} a') &=& T^{-1}\sum r(a_{(1)}a'_{(1)\cdot r'})\#_Ea_{(2)}a'_{(2)}\\
                               &=& \sum (S^{-1}(a_{(2)}a'_{(2)})\cdot r)(\v'_t(a_{(1)}a'_{(1)})\cdot r')\bar{\o}a_{(3)}a'_{(3)}\\
                               &=& \sum(S^{-1}(a_{(1)}a'_{(1)})\cdot r)r'\bar{\o}a_{(2)}a'_{(2)}.
\end{eqnarray*}
Then we get
$$\pi((r\#_{E_A}a)\#_{E_B}b)(r'\bar{\o}a')=\sum (S^{-1}(a_{(1)}a'_{(1)})\cdot r)r'\bar{\o}a_{(2)}a'_{(2)}\<a'_{(3)}, b\>.$$
Next we are going to show that the map gives a module action.
\begin{eqnarray*}
&& \pi[((r\#_{E_A}a)\#_{E_B}b)((r'\#_{E_A}a')\#_{E_B}b')](r''\bar{\o}a'')\\
&=& \sum \pi(\<a'_{(2)},b_{(1)}\>(x(a_{(1)}x')\#_{E_A}a_{(2)}a'_{(1)})\#_{E_B}b_{(2)}b')(r''\bar{\o}a'')\\
&=& \sum \<a'_{(3)}a''_{(3)}, b\>\<a''_{(4)}, b'\>(S^{-1}(a_{(2)}a'_{(1)}a''_{(1)})(x(a_{(1)}x')))x''\bar{\o}a_{(3)}a'_{(2)}a''_{(2)}\\
&=& \sum  \<a'_{(4)}a''_{(4)}, b\>\<a''_{(5)}, b'\> (S^{-1}(a_{(3)}a'_{(2)}a''_{(2)})x)(S^{-1}(a_{(2)}a'_{(1)}a''_{(1)})((a_{(1)}x')))x''\bar{\o}a_{(4)}a'_{(3)}a''_{(3)}\\
&=& \sum  \<a'_{(4)}a''_{(4)}, b\>\<a''_{(5)}, b'\>(S^{-1}(a_{(1)}a'_{(2)}a''_{(2)})x)(S^{-1}(a'_{(1)}a''_{(1)})x')x''\bar{\o}a_{(2)}a'_{(3)}a''_{(3)}
\end{eqnarray*}
For the last equality we use $r\o a=S^{-1}(E_{(1)})\cdot r\o aE_{(2)})$. And remark that we have well coverings in the above equalities. Such as in the first equality $a_{(1)}$ is covered by $x'$, and then the others are well covered. On the other side we have
\begin{eqnarray*}
&& \pi [(r\#_{E_A}a)\#_{E_B}b]\pi [(r'\#_{E_A}a')\#_{E_B}b'](r''\bar{\o}a'')\\
&=& \pi(((r\#_{E_A}a)\#_{E_B}b))\sum  \<a''_{(3)}, b'\>(S^{-1}(a'_{(1)}a''_{(1)})x')x''\bar{\o}a'_{(2)}a''_{(2)}\\
&=&\<a'_{(4)}a''_{(4)},b\>\<a''_{(5)}, b'\>(S^{-1}(a_{(1)}a'_{(2)}a''_{(2)})x)(S^{-1}(a'_{(1)}a''_{(1)})x')x''
\bar{\o}a_{(2)}a'_{(3)}a''_{(3)}.
\end{eqnarray*}
So we get a left $(R\#_{E_A}A)\#_{E_B}B$-module $R\bar{\o}A$. Since $T$ is bijective and combine with  Proposition 6.2,
 we have that $R\bar{\o}A$ is a faithful $(R\#_{E_A}A)\#_{E_B}B$-module.
\end{proof}

Now let  us specialize to the case of an algebraic quantum groupoid $A$ with a left integral $\vp$. Set
$$A\bar{\o}\hat{A}=\{a\o \vp(c\cdot)\in A\diamondsuit \hat{A}\mid c\o a=E_{(1)}c\o E_{(2)}a, \forall a,c\in A \}.$$
Following the same idea of Proposition \ref{5.6} or Theorem 7.6 in \cite{DVZ}, we get the following duality theorem.

\begin{theorem}\label{6.4} Let $A$ be an algebraic quantum groupoid with a left integral $\vp$ and with the dual $\hat{A}$.
  Let $R$ be a left $A$-module algebra. Then we have that
$(R\#_{E_A}A)\#_{E_{\hat{A}}}\hat{A}\cong R\o(A\bar{\o} \hat{A})$ as algebras.
\end{theorem}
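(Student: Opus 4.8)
The plan is to realize both algebras as faithful algebras of operators on the space $R\bar{\o}A$ and then to match their images. By the proposition preceding the theorem, $\mathcal{B}:=(R\#_{E_A}A)\#_{E_{\hat{A}}}\hat{A}$ already acts faithfully on $R\bar{\o}A$ through the representation $\pi$, and for $b=\vp(c\cdot)\in\hat{A}$ this action reads $\pi((r\#_{E_A}a)\#_{E_{\hat{A}}}\vp(c\cdot))(r'\bar{\o}a')=\sum (S^{-1}(a_{(1)}a'_{(1)})\cdot r)r'\bar{\o}a_{(2)}a'_{(2)}\vp(ca'_{(3)})$. On the other hand, by Proposition \ref{5.6} the algebra $A\bar{\o}\hat{A}$ acts faithfully on $A$ through the rank-one maps $(u\o\vp(v\cdot))\rhd a'=\vp(va')u$. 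I would therefore let $N:=R\o(A\bar{\o}\hat{A})$ act on $R\bar{\o}A$ by the map $\sigma$ that multiplies on the $R$-leg and applies these rank-one operators on the $A$-leg, that is $\sigma(r\o(u\o\vp(v\cdot)))(r'\bar{\o}a')=rr'\bar{\o}\vp(va')u$. Once $\sigma$ is shown to be a faithful representation with the same image as $\pi$, the composite $\sigma^{-1}\ci\pi$ will be the desired isomorphism $\mathcal{B}\cong N$.

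First I would check that $\sigma$ is well defined, i.e. that it preserves the corner $R\bar{\o}A$, using that the defining relation $r'\o a'=\sum S^{-1}(E_{(1)})\cdot r'\o a'E_{(2)}$ is compatible with the relation $v\o u=E_{(1)}v\o E_{(2)}u$ cutting out $A\bar{\o}\hat{A}$, and that it is faithful: faithfulness on the $A\bar{\o}\hat{A}$-leg is exactly the faithfulness established in Proposition \ref{5.6}, which rests on the faithful set of integrals, while faithfulness on the $R$-leg uses $R^2=R$ and the non-degeneracy of $R$. The heart of the argument is then to prove $\pi(\mathcal{B})=\sigma(N)$. Guided by the isomorphism $\theta$ of Proposition \ref{5.6}, the natural candidate for $\sigma^{-1}\ci\pi$ is $(r\#_{E_A}a)\#_{E_{\hat{A}}}\vp(c\cdot)\mapsto \sum r\o(aS(c_{(1)})\o\vp(c_{(2)}\cdot))$, so concretely I would verify the operator identity $\pi((r\#_{E_A}a)\#_{E_{\hat{A}}}\vp(c\cdot))=\sigma(\sum r\o(aS(c_{(1)})\o\vp(c_{(2)}\cdot)))$ on $R\bar{\o}A$; surjectivity of $\pi$ onto $\sigma(N)$ then follows because the elements $aS(c_{(1)})\o\vp(c_{(2)}\cdot)$ span $A\bar{\o}\hat{A}$ (as in Proposition \ref{5.6}) and $R\#_{E_A}A=\pi_R(R)\pi_A(A)$.

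The main obstacle is precisely this operator identity, since the two sides differ sharply before simplification: $\pi$ produces the twisted factor $S^{-1}(a_{(1)}a'_{(1)})\cdot r$ on the $R$-leg and retains $a_{(2)}a'_{(2)}$ on the $A$-leg, whereas $\sigma$ leaves $r$ untwisted and collapses the $A$-leg to $aS(c_{(1)})$ with the input $a'$ surviving only through the scalar $\vp(c_{(2)}a')$. Reconciling them requires (i) the strong left invariance of $\vp$, in the form of the integral identity $\sum a'_{(1)}\vp(ca'_{(2)})=\sum S(c_{(1)})\vp(c_{(2)}a')$ already used in Proposition \ref{5.6}, to transfer the $a'$-dependence into the integral, and (ii) the defining relation of $R\bar{\o}A$ together with the covariance relation $(y\rhd r)r'=r(S(y)\rhd r')$ of Proposition \ref{3.3} and the formulas (2.6)--(2.8) for the canonical idempotent $E$, to absorb the $S^{-1}$-twist — on the corner $R\bar{\o}A$ the twist forced by $\pi$ is exactly the one already encoded in the constraint, so it cancels. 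The delicate part is keeping every covering legitimate during these manipulations, in particular ensuring that each occurrence of $E$, $S^{-1}$ and $\vp$ is applied to elements genuinely in $A\o A$ rather than merely in $M(A\o A)$. Once the identity is verified, the faithfulness of $\pi$ and of $\sigma$ yields the isomorphism $\mathcal{B}\cong R\o(A\bar{\o}\hat{A})$.
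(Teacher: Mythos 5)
Your overall strategy coincides with the paper's: represent both $(R\#_{E_A}A)\#_{E_{\hat{A}}}\hat{A}$ and $R\o(A\bar{\o}\hat{A})$ faithfully as operators on $R\bar{\o}A$ and show the two images agree. However, the concrete step on which your whole plan rests is false: the operator identity $\pi((r\#_{E_A}a)\#_{E_{\hat{A}}}\vp(c\cdot))=\sigma\bigl(\sum r\o(aS(c_{(1)})\o\vp(c_{(2)}\cdot))\bigr)$ does not hold, and the mechanism you invoke to rescue it --- that ``on the corner $R\bar{\o}A$ the twist forced by $\pi$ is exactly the one already encoded in the constraint, so it cancels'' --- cannot work, because the twist $S^{-1}(a_{(1)}a'_{(1)})\cdot r$ survives even when $E=1$, where $R\bar{\o}A=R\o A$ and $A\bar{\o}\hat{A}=A\diamondsuit\hat{A}$ carry no constraint whatsoever. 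Concretely, take $A=\mathbb{C}G$ for a finite group $G$, $\vp(\l_g)=\d_{g,e}$, and let $R$ be the function algebra on $G$ with $(\l_g\cdot f)(h)=f(g^{-1}h)$. Then
$$
\pi\bigl((r\#\l_g)\#\vp(\l_c\cdot)\bigr)(r'\o\l_h)=\d_{h,c^{-1}}\,(\l_{cg^{-1}}\cdot r)r'\o\l_{gc^{-1}},
$$
whereas your candidate gives $\sigma\bigl(r\o(\l_{gc^{-1}}\o\vp(\l_c\cdot))\bigr)(r'\o\l_h)=\d_{h,c^{-1}}\,rr'\o\l_{gc^{-1}}$. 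These differ unless $\l_{cg^{-1}}\cdot r=r$, so no amount of strong left invariance, covariance relations or corner relations will verify your identity.

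What is true --- and what the paper actually proves --- is an equality of \emph{spans}, not of individual operators. Because the twist map $r\o a\mapsto\sum S^{-1}(a_{(1)})\cdot r\o a_{(2)}$ (essentially $T^{-1}$) is bijective, the span over $r,a,c$ of the twisted operators produced by $\pi$ equals the span of the untwisted operators $r'\bar{\o}a'\mapsto rr'\bar{\o}\vp(E_{(1)}pa')E_{(2)}q$, which is exactly $\sigma(N)$; in the group example this is just the observation that $\l_{k^{-1}}\cdot r$ runs over all of $R$ as $r$ does. So you can repair your argument in one of two ways: either drop the element-wise identity and argue at the level of spans, invoking bijectivity of $T$ (and strong left invariance to pass from $\<a'_{(3)},b\>$ to the form $\vp(c_{(2)}a')$), as the paper does; or correct your candidate map to the twisted one suggested by the computation above, e.g. $(r\#_{E_A}a)\#_{E_{\hat{A}}}\vp(c\cdot)\mapsto\sum\bigl(c_{(1)}S^{-1}(a_{(1)})\cdot r\bigr)\o\bigl(a_{(2)}S(c_{(2)})\o\vp(c_{(3)}\cdot)\bigr)$ (with the coverings checked carefully), and then verify directly that this twisted map intertwines $\pi$ and $\sigma$. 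Either repair is genuine work that your proposal, as written, does not contain.
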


\begin{proof}
For $a,a'\in A, b\in \hat{A}, r,r'\in R$, the resulting of the algebra $(R\#_{E_A}A)\#_{E_{\hat{A}}}\hat{A}$ acting on $R\bar{\o}A$
 is the span of operators of the form
$$r'\bar{\o}a' \mapsto  \sum (S^{-1}(a_{(1)}a'_{(1)})\cdot r)r'\bar{\o}a_{(2)}a'_{(2)}\<a'_{(3)}, b\>.$$
We denote the algebra of operators by  $P$. Since $T$ is bijective and  write $b$ as $\vp(c\cdot)$ where $c\in A$, we find that $P$ is spanned by the operators of the form
$$r'\bar{\o}a' \mapsto \sum \vp(c_{(3)}a')(c_{(2)}x)x'\bar{\o} aS(c_{(1)}).$$
We can replace $\sum c_{(2)}\o aS(c_{(1)})$ by $E_{(1)}p\o E_{(2)}q$, for any $p, q\in A$
 and with the fact that $T$ is bijective then $P$ is spanned by the operators of the form
$$r'\bar{\o}a' \mapsto rr'\bar{\o}\vp(E_{(1)}pa')E_{(2)}q.$$
Since both modules are faithful, then we  get an isomorphism.
\end{proof}

 Let $H$ be a regular  weak multiplier Hopf algebra with an identity. Then it is a a weak Hopf algebra (see Proposition 4.12 in \cite{VDW2}).
 In this case we obtain the following duality theorem for actions of quantum groupoids which was proven in \cite{N}.

\begin{corollary}[\cite{N} Lemma 3.1]
Let $H$ be a finite-dimensional  weak Hopf algebra and $A$ be a left $H$-module algebra. Then we can form the smash product $A\#H$ and $(A\#H)\#\hat{H}$. The map $\a: (A\#H)\#\hat{H}\lr End(A\#H)_A$ defined by
$$\a((x\#h)\#\phi)(y\#g)=(x\#h)(y\#(\phi\rightharpoonup g))=x(h_{(1)}\cdot y)\#h_{(2)}(\phi\rightharpoonup g)$$
for all $x,y\in A, h,g\in H, \phi\in \hat{H}$ is an isomorphism of algebras.
\end{corollary}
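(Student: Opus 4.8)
The plan is to derive this corollary as the finite-dimensional, unital specialization of Theorem \ref{6.4}, with $H$ in the role of the algebraic quantum groupoid and the module algebra $A$ in the role of $R$. First I would verify that $H$ satisfies the standing hypotheses of Section \ref{five}: being finite-dimensional, $H$ is unital and regular, its canonical idempotent is $E_H=\D(1)$ (see \cite{VDW2}), and it carries a faithful (Haar) integral $\vp$, so that $H$ is an algebraic quantum groupoid and $\hat H$ is again a regular weak multiplier Hopf algebra. Because $H$ is unital, the $E$-truncated smash products $A\#_{E_H}H$ and $(A\#_{E_H}H)\#_{E_{\hat H}}\hat H$ are exactly the products written $A\#H$ and $(A\#H)\#\hat H$ in the statement.

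Next I would show that $\a$ is precisely the faithful action of the bi-smash product constructed before Theorem \ref{6.4}. By the proposition asserting that $R\#_{E_A}A$ is a faithful $(R\#_{E_A}A)\#_{E_B}B$-module (specialized to $R=A$, to the groupoid $H$, and to its dual $\hat H$), the algebra $(A\#H)\#\hat H$ acts on $A\#H$ by $((x\#h)\#\phi)\c(y\#g)=(x\#h)(y\#(\phi\rhd g))$. Since the left action of $\hat H$ on $H$ is $\phi\rhd g=\sum g_{(1)}\<g_{(2)},\phi\>=\phi\rightharpoonup g$, and the product in $A\#H$ is given by (4.1), this action reads $y\#g\mapsto x(h_{(1)}\c y)\#h_{(2)}(\phi\rightharpoonup g)$, that is, it is the operator $\a((x\#h)\#\phi)$. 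A left action always yields an algebra homomorphism into $End(A\#H)$ (with composition), and faithfulness of the action is exactly the injectivity of $\a$.

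I would then check that each $\a((x\#h)\#\phi)$ is right $A$-linear, so that $\a$ lands in $End(A\#H)_A$. Here the right $A$-module structure on $A\#H$ is right multiplication along the embedding $a\mapsto a\#1$, namely $(y\#g)\lhd a=\sum y(g_{(1)}\c a)\#g_{(2)}$. Applying $\a((x\#h)\#\phi)$ before and after $\lhd a$ and comparing, both sides reduce to $\sum x(h_{(1)}\c y)((h_{(2)}g_{(1)})\c a)\#h_{(3)}g_{(2)}\phi(g_{(3)})$, the agreement being a direct consequence of the module-algebra axiom (3.1) together with $h_{(2)}\c(g_{(1)}\c a)=(h_{(2)}g_{(1)})\c a$. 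This step is routine.

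The main obstacle is surjectivity onto the \emph{full} endomorphism algebra. By the proof of Theorem \ref{6.4}, the image of $\a$ is the operator algebra isomorphic to $A\o(H\bar{\o}\hat H)$, so it remains to see that in the finite-dimensional case this exhausts $End(A\#H)_A$. I would argue by dimension: Propositions \ref{5.5} and \ref{5.6} realize $H\#_{\hat E}\hat H\cong H\bar{\diamondsuit}\hat H$ faithfully on $H$ as the algebra of maps $a'\mapsto aS(E_{(1)})\vp(E_{(2)}a')$, identifying $H\bar{\o}\hat H$ with the endomorphism algebra of $H$ over its base subalgebra, and tensoring with $A$ then matches $A\o(H\bar{\o}\hat H)$ against $End(A\#H)_A$. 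Since $H$, hence $\hat H$, is finite-dimensional, all these spaces are finite-dimensional, so it suffices to check the numerical equality $\dim\big((A\#H)\#\hat H\big)=\dim End(A\#H)_A$; combined with the injectivity already established, this forces $\a$ to be bijective, and hence an algebra isomorphism. The delicate point is precisely this bookkeeping: because the smash products are $E$-truncated, the naive product of dimensions must be replaced by the dimensions of the truncated spaces, and it is exactly this replacement that turns the classical conclusion $(A\#H)\#H^{*}\cong M_n(A)$ into an endomorphism algebra $End(A\#H)_A$ in the groupoid setting.
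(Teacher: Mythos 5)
Your overall strategy is the paper's: the corollary is obtained by specializing Theorem \ref{6.4}, after observing that a finite-dimensional weak Hopf algebra $H$ is a unital regular weak multiplier Hopf algebra with $E_H=\D(1)$ and a faithful set of integrals, and that the map $\a$ is exactly the faithful action of the bi-smash product on $A\#H$ from Proposition 6.2 (so injectivity is faithfulness); your right $A$-linearity check is also fine. (One small imprecision: what a general finite-dimensional weak Hopf algebra carries is a faithful, i.e.\ non-degenerate, integral, not a Haar integral -- the latter needs semisimplicity -- but faithfulness is all that is required here.) The genuine gap is in your surjectivity step. You assert that ``since $H$, hence $\hat H$, is finite-dimensional, all these spaces are finite-dimensional,'' but the statement assumes only that $H$ is finite-dimensional: the module algebra $A$ is arbitrary and may be infinite-dimensional -- this is the whole point of Blattner--Montgomery-type duality, where $(A\#H)\#H^{*}\cong M_n(A)$ for arbitrary $A$. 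Hence $(A\#H)\#\hat H$ and $End(A\#H)_A$ are in general infinite-dimensional, the proposed count $\dim\big((A\#H)\#\hat H\big)=\dim End(A\#H)_A$ is unavailable, and ``injective plus equal dimension implies bijective'' cannot be invoked. Moreover, even in the finite-dimensional case you never actually compute either dimension: since $A\#H$ is the truncated space $E\rhd(A\o H)$ rather than the free module $A\o H$, the size of $End(A\#H)_A$ depends nontrivially on the action, and the asserted numerical equality is essentially equivalent to the corollary itself, so assuming it is circular.

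What has to be supplied to close the argument is a proof that the operator algebra produced by Theorem \ref{6.4} -- the span of the maps $r'\bar{\o}a'\mapsto rr'\bar{\o}\vp(E_{(1)}pa')E_{(2)}q$, transported through the bijection $T$ back to operators on $A\#H$ -- is \emph{all} of $End(A\#H)_A$. Concretely, given an arbitrary right $A$-linear endomorphism of $A\#H$, one must exhibit a preimage; in the finite-dimensional setting this is done by a dual-basis computation with the non-degenerate integral $\vp$ on $H$, which is exactly Nikshych's ``straightforward computation'' that the paper cites rather than repeats. Your identification of $H\bar{\o}\hat H$ with the endomorphism algebra of $H$ over its base algebra, and the claim that tensoring with $A$ matches $A\o(H\bar{\o}\hat H)$ against $End(A\#H)_A$, are statements of what must be proved, not proofs of it; as written, the last step of your argument is missing.
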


In \cite{N} it is proved by a straightforward computation. Remark that  we use a different method to prove the theorem. And also we generalize it to the infinite case.

If the canonical idempotent $E$ is equal to $1$, then the regular  weak multiplier Hopf algebra $A$ is a regular multiplier Hopf algebra.
 And now we can get  the main result in \cite{DVZ}.

\begin{corollary}[\cite{DVZ} Theorem 7.6]
If $A$ is an algebraic quantum group, acting on an algebra $R$ and if $\hat{A}$ is the dual of $A$, acting on the smash product $R\#A$ by means of the dual action, then the bi-smash product $(R\#A)\#\hat{A}$ is isomorphic with $R\o (A\diamondsuit \hat{A})$.
\end{corollary}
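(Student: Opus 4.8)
The plan is to obtain this corollary as the degenerate case $E_A=1$ of the main duality theorem, Theorem \ref{6.4}. The first observation, already recorded in the text, is that an algebraic quantum group is exactly an algebraic quantum groupoid whose canonical idempotent satisfies $E_A=1=1\o 1$ in $M(A\o A)$; since the dual of a regular multiplier Hopf algebra is again one, the dual structure also has $\widehat{E}=1$. Consequently the source and target algebras collapse to the scalars, $\v_s=\v_t=\v$, and all of the genuinely weak bookkeeping of Section \ref{six} trivialises. The dual action of $\hat{A}$ on the smash product referred to in the statement is precisely the $\hat{A}$-module-algebra action attached to the pairing $\<A,\hat{A}\>$ at the start of Section \ref{six}, specialised to $E_A=1$, where the left action of $\hat A$ on $A$ is $b\rhd a=\sum a_{(1)}\<a_{(2)},b\>$.

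I would then simply run Theorem \ref{6.4} and watch each $E$-decorated object degenerate. Since $E_A=1$ we have $E\rhd(r\# a)=E_{(1)}\rhd r\# E_{(2)}a=r\# a$, so $R\#_{E_A}A$ is nothing but the ordinary smash product $R\# A$, and likewise $\widehat{E}=1$ turns $(R\# A)\#_{E_{\hat{A}}}\hat{A}$ into $(R\# A)\#\hat{A}$. The one place that deserves an explicit check is the constrained tensor product on the right-hand side. By definition
$$A\bar{\o}\hat{A}=\{a\o \vp(c\cdot)\mid c\o a=E_{(1)}c\o E_{(2)}a\},$$
and with $E=1\o 1$ the defining relation reads $c\o a=c\o a$, hence holds for every $c,a\in A$. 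Because $\{\vp(c\cdot)\mid c\in A\}=\hat{A}$ by definition of the dual, the set $A\bar{\o}\hat{A}$ exhausts all of $A\diamondsuit\hat{A}$, so Theorem \ref{6.4} reads $(R\# A)\#\hat{A}\cong R\o(A\diamondsuit\hat{A})$, which is the assertion.

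The main obstacle is not computational but conceptual: one must make sure that every twisted construction feeding into Theorem \ref{6.4} really specialises to the untwisted ones of \cite{DVZ}, so that applying the theorem and then setting $E=1$ is legitimate. In particular the intermediate module $R\bar{\o}A$, whose defining relation $r\o a=S^{-1}(E_{(1)})\cdot r\o aE_{(2)}$ becomes vacuous at $E=1$, should be identified with $R\o A$, and the twist map $T$, its inverse $T^{-1}$, and the faithful action $\pi$ constructed just before Theorem \ref{6.4} should reduce to the corresponding maps of \cite{DVZ}. Once these identifications are verified the isomorphism furnished by Theorem \ref{6.4} transports verbatim, and no separate argument for the multiplier Hopf algebra case is required.
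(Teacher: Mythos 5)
Your proposal is correct and follows exactly the route the paper intends: the paper offers no separate argument for this corollary beyond the remark that $E=1$ makes $A$ a regular multiplier Hopf algebra, so the statement is just Theorem \ref{6.4} with every $E$-decorated construction ($R\#_{E_A}A$, $(R\#_{E_A}A)\#_{E_{\hat{A}}}\hat{A}$, $A\bar{\o}\hat{A}$) collapsing to its untwisted counterpart. Your explicit verification that $\widehat{E}=1$ and that the defining relation of $A\bar{\o}\hat{A}$ becomes vacuous, yielding $A\diamondsuit\hat{A}$, is precisely the specialization the paper leaves implicit.
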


\section*{Acknowledgments}

The authors are very thankful to Professor A. Van Daele for his valuable comments on this paper.
The authors are also very grateful to the anonymous referee for his/her thorough review of this work and his/her comments and suggestions
 which help to improve the first and the second version of this paper.  The work was partially supported by the NSF of
 China (No. 11371088) and the NSF of China (No.11571173).

\end{document}